\documentclass[a4paper,  12pt]{amsart}

\usepackage[margin=2cm]{geometry}
\usepackage{amscd}
\usepackage{amsmath}
\usepackage{amssymb}
\usepackage{amsthm}
\usepackage{verbatim}
\usepackage[all]{xy}
\usepackage{color}
\usepackage[pagebackref]{hyperref}

\newtheorem{theorem}{Theorem}[section]
\newtheorem{lemma}[theorem]{Lemma}
\newtheorem{proposition}[theorem]{Proposition}

\theoremstyle{plain}
\newtheorem{result}{Theorem}

\theoremstyle{definition}

\theoremstyle{definition} 
\newtheorem*{definition*}{Definition}

\theoremstyle{theorem}

\theoremstyle{theorem} 
\newtheorem*{conjecture*}{Conjecture}

\theoremstyle{definition}

\theoremstyle{definition} 
\newtheorem*{assumption*}{Assumption}

\theoremstyle{remark}

\theoremstyle{remark} 
\newtheorem*{remark*}{Remark}

\newcommand{\NN}{\mathbb{N}}
\newcommand{\RR}{\mathbb{R}}
\newcommand{\CC}{\mathbb{C}}

\newcommand{\ZZ}{\mathbb{Z}}
\newcommand{\LLL}{\mathcal{L}}
\newcommand{\D}{\mathcal{D}}

\newcommand{\Z}{\mathcal{Z}}
\newcommand{\V}{\mathcal{V}}

\newcommand{\TT}{\mathbb{T}}
\newcommand{\di}{\mathrm{d}}

\newcommand{\bj}{\boldsymbol{j}}
\newcommand{\bk}{\boldsymbol{k}}
\newcommand{\bl}{\boldsymbol{l}}
\newcommand{\bn}{\boldsymbol{n}}

\newcommand{\bep}{\boldsymbol{\e}}
\newcommand{\bmu}{\boldsymbol{\mu}}
\newcommand{\bnu}{\boldsymbol{\nu}}

\newcommand{\bZ}{\boldsymbol{\mathcal{Z}}}

\newcommand{\Sl}{\mathfrak{sl}}
\newcommand{\SL}{\mathrm{SL}}
\newcommand{\SO}{\mathrm{SO}}
\newcommand{\PSL}{\mathrm{PSL}}
\newcommand{\Cinf}{C^{\infty}}
\newcommand{\Lii}{L^{2}}
\newcommand{\aaa}{\mathfrak{a}}
\newcommand{\HH}{\mathcal{H}}
\newcommand{\II}{\mathcal{I}}

\providecommand{\abs}[1]{\lvert#1\rvert}
\providecommand{\Abs}[1]{\left|#1\right|}
\providecommand{\norm}[1]{\lVert#1\rVert}



\def\a{{\alpha}}
\def\b{{\beta}}
\def\d{{\delta}}
\def\e{{\epsilon}}
\def\s{{\sigma}}

\def\t{{\tau}}

\def\l{{\lambda}}

\def\o{{\omega}}
\def\O{{\Omega}}
\def\G{{\Gamma}}

\setcounter{errorcontextlines}{999}

\title[Higher cohomology for Anosov actions]{Higher cohomology for Anosov actions on certain homogeneous spaces}
\author[F. A. Ram{\'i}rez]{Felipe A. Ram{\'i}rez}
\thanks{The author was partially supported by NSF RTG number DMS-0602191}
\date{}
\address{School of Mathematics, University of Bristol, Bristol, UK}
\email{f.a.ramirez@bristol.ac.uk}

\begin{document}


\begin{abstract}
We study the smooth untwisted cohomology with real coefficients for the action on $[\SL(2,\RR) \times\dots\times \SL(2,\RR)]/\G$ by the subgroup of diagonal matrices, where $\G$ is an irreducible lattice.  In the top degree, we show that the obstructions to solving the coboundary equation come from distributions that are invariant under the action.  In intermediate degrees, we show that the cohomology trivializes.  It has been conjectured by A. and S. Katok that, for a standard partially hyperbolic $\RR^d$- or $\ZZ^d$-action, the obstructions to solving the top-degree coboundary equation are given by periodic orbits, in analogy to Liv{\v{s}}ic's theorem for Anosov flows, and that the intermediate cohomology trivializes, as it is known to do in the first degree, by work of Katok and Spatzier.  Katok and Katok proved their conjecture for abelian groups of toral automorphisms.  For diagonal subgroup actions on $\SL(2,\RR)^d/\G$, our results verify the ``intermediate cohomology'' part of the conjecture, and are a step in the direction of the ``top-degree cohomology'' part.
\end{abstract}


\maketitle

\tableofcontents

\section{Introduction}

The focus of this work is a conjecture (stated below, and in \cite{KK95}) due to A. Katok and S. Katok on the smooth cohomology for hyperbolic actions by higher-rank abelian groups on smooth manifolds.  The conjecture seeks to explain the contrast between rank-one and higher-rank $1$-cocycle rigidity phenomena for these group actions.  For Anosov flows and diffeomorphisms, Liv{\v{s}}ic's theorem (and subsequent extensions by several authors) gives a full set of obstructions to solving the degree-one coboundary equation.  These obstructions correspond to periodic orbits.  For higher-rank Anosov actions satisfying certain irreducibility conditions, results of A. Katok and R. Spatzier show that there are no obstructions to solving the degree-one (almost) coboundary equation.\footnote{In fact, they prove this for the so-called \emph{standard} Anosov actions, defined in \cite{KS94first}.  These constitute the known examples of higher rank Anosov actions satisfying the desired irreducibility conditions.}  Katok and Katok conjecture that any standard partially hyperbolic action by $\ZZ^d$ or $\RR^d$ has obstructions given by periodic data to solving the coboundary equation in degree $d$, but not in lower degrees.  This conjecture frames the theorems for first cohomology of Liv{\v{s}}ic and Katok--Spatzier in a general statement involving higher cohomology.

The full problem remains open.  At present, only the results of Katok and Katok exist in the literature \cite{KK95, KK05}.  There, they proved their conjecture for actions $\ZZ^d \curvearrowright \TT^N$ by partially hyperbolic toral automorphisms.  We treat the case of Anosov $\RR^d$-actions on quotients of $d$-fold products of $\SL(2,\RR)$.


\subsection{Historical context and problem statement}

The first cohomology of group actions has been much studied in dynamics.  One of the most celebrated results in this area is from work of A. N. Liv{\v{s}}ic \cite{Livsic}, and subsequent related work by V. Guillemin and D. Kazhdan \cite{GK80}, and R. de la Llave, J. Marko, and R. Moriy{\'o}n \cite{dlLMM}, where it was established that for Anosov flows and diffeomorphisms, the first cohomology is determined by periodic orbits.  Liv{\v{s}}ic's theorem states that for an Anosov flow $\RR \curvearrowright M$, where $M$ is a smooth manifold, a given $1$-cocycle is a coboundary if and only if its integral around every closed orbit is $0$.  It is not hard to see that this condition from periodic orbits is necessary: one can see a $1$-cocycle over the flow as a closed differential $1$-form $\o$ along the orbit foliation.  By restricting $\o$ to any closed orbit, one obtains a $1$-form in the usual (de Rham) sense on this orbit.  If the form is exact, then the fundamental theorem of calculus implies that its integral over the orbit must be $0$.  Liv{\v{s}}ic's theorem gives that, in the hyperbolic setting, this condition coming from periodic orbits is also sufficient.  

For standard Anosov and partially hyperbolic $\RR^d$-actions, $d \geq 2$, the first cohomology was studied by A. Katok and R. Spatzier \cite{KS94first}.  Here, the situation is different.  Katok and Spatzier showed that the first smooth cohomology \emph{trivializes}, \emph{i.e.}~any smooth $\RR$-valued cocycle is cohomologous to a constant cocycle; it is an \emph{almost coboundary}.  In other words, for higher-rank Anosov actions, there are no obstructions to solving the almost coboundary equation.  This comes about as a consequence of having positive codegree.  One of the key steps in the proof of the Katok--Spatzier result is the so-called \emph{higher-rank trick}; it is essentially the observation that if one has a converging sum over two parameters, and one knows that every sum over the first parameter is the same, then one can conclude that every sum over the first parameter must be $0$.  This observation becomes useful in a natural way when solving the coboundary equation in positive codegree.  For example, in Katok and Spatzier's work, a distributional solution to the degree-one coboundary equation is obtained by summing over values of a given cocycle at evenly spaced points on a ``forward'' orbit along a fixed direction in the acting $\RR^d$.  A similar sum over the corresponding ``backward'' orbit yields another distributional solution which must be shown to coincide with the first one.  For this, their difference, which is now essentially a sum over points on a line in $\RR^d$, must be zero.  It is shown that a further sum over an independent direction in $\RR^d$ converges, and that the first sum is independent of this second parameter.  The observation described above then allows the conclusion that the two distributional solutions are the same.

Versions of the higher-rank trick have since appeared in other cocycle rigidity results, not just in the hyperbolic setting \cite{M2,M1,Ram09}, and a version of it also appears in the intermediate-degree part of the Katok--Katok result for hyperbolic toral automorphisms \cite{KK95}.  Indeed, our Proposition \ref{prop} can be thought of as a version of the higher-rank trick.

Adopting the point of view that $1$-cocycles are closed differential $1$-forms along the orbits, it is natural to define higher-degree cocycles by closed differential forms in the corresponding degree (see Section \ref{definitions}).  Now, consider a $d$-cocycle $\o$ over an Anosov $\RR^d$-action on $M$.  As in the case of flows, one can restrict $\o$ to a periodic orbit, and obtain a top-degree differential form over this orbit in the usual sense.  One sees from Stokes' theorem that a necessary condition for being able to solve the coboundary equation $\di\eta=\o$ is that the integral of $\o$ over every closed orbit is $0$.  We will say that the action satisfies the \emph{Liv{\v{s}}ic property for $d$-cocycles} if this condition is also sufficient for the existence of a solution to the coboundary equation.  Thus, we know from \cite{Livsic, GK80, dlLMM} that Anosov flows have the Liv{\v{s}}ic property for $1$-cocycles.

The following conjecture is due to A. and S. Katok \cite{KK95}, and is one of the principal motivations for our work.

\begin{conjecture*}[Katok--Katok]
Let $\a$ be a standard partially hyperbolic action of $\ZZ_{+}^{d}$, $\ZZ^{d}$, or $\RR^{d}$, $d\geq2$.  Then the smooth $n$-cohomology of $\a$ trivializes for $1\leq n\leq d-1$, and $\a$ satisfies the Liv{\v{s}}ic property for $d$-cocycles.  If $\a$ is a standard Anosov action the same is true in $C^1$ and H{\"o}lder cases.
\end{conjecture*}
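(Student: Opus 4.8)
Since the full conjecture is open, what follows is a plan for the special case that is the subject of this paper: the action of the full diagonal subgroup $A\cong\RR^{d}$ on $M=[\SL(2,\RR)\times\cdots\times\SL(2,\RR)]/\G$ with $\G$ irreducible. The plan is to pass to the unitary dual and compute the leafwise cohomology one irreducible representation at a time. First I would identify a smooth $n$-cocycle with an element of $\Cinf(M)\otimes\wedge^{n}\aaa^{*}$ killed by the leafwise de Rham differential $\di$, where $\aaa=\mathrm{Lie}(A)=\aaa_{1}\oplus\cdots\oplus\aaa_{d}$ and $\aaa_{i}\cong\RR$ is spanned by the generator $X_{i}$ of the geodesic flow on the $i$-th factor. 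Decomposing the smooth vectors of $\Lii(M)$ into irreducible unitary representations of $G=\SL(2,\RR)^{d}$ and using that $\G$ is irreducible (a Mautner-type density argument), every nontrivial irreducible summand is a tensor product $\pi=\pi_{1}\otimes\cdots\otimes\pi_{d}$ with each $\pi_{i}$ a \emph{nontrivial} irreducible unitary representation of $\SL(2,\RR)$. The coboundary problem then becomes the computation of the Lie algebra cohomology $H^{\bullet}(\aaa,\pi^{\infty})$ of the complex $\big(\pi^{\infty}\otimes\wedge^{\bullet}\aaa^{*},\di\big)$ in each such $\pi$, together with the trivial representation (whose contribution to cohomology is the constant cocycles $\wedge^{\bullet}\aaa^{*}$).

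Second, since $\aaa$ is a direct sum and $\pi$ a tensor product, I would set up a K{\"u}nneth isomorphism $H^{\bullet}(\aaa,\pi^{\infty})\cong\bigotimes_{i=1}^{d}H^{\bullet}(\aaa_{i},\pi_{i}^{\infty})$ and analyze the single-factor complex $0\to\pi_{i}^{\infty}\xrightarrow{\;X_{i}\;}\pi_{i}^{\infty}\to0$. Here $H^{0}(\aaa_{i},\pi_{i}^{\infty})=\ker(X_{i}|_{\pi_{i}^{\infty}})=0$, because by Howe--Moore a nontrivial irreducible unitary representation of $\SL(2,\RR)$ has no $A_{i}$-invariant, hence no smooth $X_{i}$-fixed, vector; and $H^{1}(\aaa_{i},\pi_{i}^{\infty})=\pi_{i}^{\infty}/X_{i}\pi_{i}^{\infty}$, which, by a Liv{\v{s}}ic-type analysis inside a single representation -- explicit models of the principal, complementary and discrete series, and Sobolev estimates generated by the Casimir -- has $X_{i}$ of closed range and $H^{1}$ dual to the space $(\pi_{i}^{-\infty})^{X_{i}}$ of $X_{i}$-invariant distribution vectors. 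Feeding this into K{\"u}nneth, for $n\leq d-1$ every summand of $H^{n}(\aaa,\pi^{\infty})$ carries a factor $H^{0}(\aaa_{i},\pi_{i}^{\infty})=0$, so $H^{n}(\aaa,\pi^{\infty})=0$; while $H^{d}(\aaa,\pi^{\infty})\cong\bigotimes_{i}H^{1}(\aaa_{i},\pi_{i}^{\infty})$ is dual to $\widehat{\bigotimes}_{i}(\pi_{i}^{-\infty})^{X_{i}}=(\pi^{-\infty})^{\aaa}$, the $A$-invariant distribution vectors of $\pi$ (a nuclearity argument for the last identification). Summing over the decomposition: in degrees $1,\dots,d-1$ the nontrivial part contributes nothing and the cohomology reduces to constant cocycles -- this is the ``intermediate cohomology'' part; in degree $d$ the only obstructions are the $A$-invariant distributions on $M$ -- this is the ``top-degree'' part, and the further identification of those distributions with periodic-orbit integrals, which would yield the full Liv{\v{s}}ic property, is \emph{not} carried out, whence ``a step in the direction.''

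Third -- and this is where the work really is -- the per-representation statements must be globalized to genuinely smooth forms on $M$, which requires solving the coboundary equation in each $\pi$ with a tame Sobolev estimate whose loss is controlled \emph{uniformly} in $\pi$ (polynomially in the Casimir eigenvalue, the continuous spectrum of Eisenstein series being handled separately in the finite-volume case). This is where the higher-rank structure enters, exactly as in Katok--Spatzier: exploiting that the $X_{i}$ are Anosov, one builds a distributional primitive by summing a cocycle along a contracting orbit in a chosen direction of $A$, builds a competing primitive from the opposite direction, and invokes Proposition \ref{prop} -- a version of the higher-rank trick -- to see, by summing over an independent direction, that the two primitives coincide and hence that the primitive is smooth. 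I expect the main obstacle to be precisely this uniform control: proving the Liv{\v{s}}ic-type estimate for $X_{i}$ on $\pi_{i}^{\infty}$ with loss independent of $\pi_{i}$, and checking that the K{\"u}nneth reassembly and the higher-rank averaging preserve the requisite bounds. The top-degree case is the harder of the two, since there one must additionally verify that the range of $\di$ on $(d-1)$-cocycles is closed and that $(\pi^{-\infty})^{\aaa}$ is genuinely exhausted by $\widehat{\bigotimes}_{i}(\pi_{i}^{-\infty})^{X_{i}}$.
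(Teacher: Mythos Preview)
Your overall architecture---reduce to irreducible $\pi=\pi_1\otimes\cdots\otimes\pi_d$ via the direct integral, compute the leafwise cohomology there, then reassemble with tame Sobolev estimates uniform over the spectral gap---matches the paper's. The substantive difference is in step two. You propose an abstract K\"unneth isomorphism $H^{\bullet}(\aaa,\pi^{\infty})\cong\bigotimes_i H^{\bullet}(\aaa_i,\pi_i^{\infty})$ and then read off the answer. The paper never invokes K\"unneth; instead it runs an explicit induction on $d$. For the top degree it constructs, by hand, a splitting $f=f_1+f_{\bmu}$ (equation~\eqref{dfoldf}) so that each projection $(f_1\mid_{\bl})$ lies in $\ker\II_{X_1}$ and each $(f_{\bmu}\mid_k)$ lies in $\ker\II_{X_2,\dots,X_{d+1}}$, then applies Mieczkowski's $d=1$ result and the inductive hypothesis separately. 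The Sobolev bookkeeping in Lemma~\ref{sobolevnorms} is exactly what produces the recursion $s_{d+1}=2(s_d+s)+d$ and is, in effect, a quantitative K\"unneth proved from scratch. Your K\"unneth would have to be a statement about completed tensor products of Fr\'echet (or Sobolev) spaces with explicit loss of derivatives, and that is not a black box you can cite; proving it would amount to redoing the paper's induction.

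Your third paragraph contains a real misreading. You describe the globalization as ``summing a cocycle along a contracting orbit \dots\ building a competing primitive from the opposite direction \dots\ summing over an independent direction to see the two coincide.'' That is the geometric Katok--Spatzier mechanism for first cohomology, and it is \emph{not} what happens here. The paper never sums along orbits and never uses hyperbolicity dynamically; it works entirely in the adapted basis $\{u_k\}$ of each irreducible, where the coboundary equation becomes a difference equation. Proposition~\ref{prop}, which you correctly flag as the higher-rank trick, is purely representation-theoretic: it says that if a $(d-1)$-form $\o$ is closed, then applying any $X_2,\dots,X_d$-invariant distribution $\D$ to the restriction $(\o_1\mid_k)$ yields an $X_1$-invariant vector in $\HH_1$, which must vanish since $\HH_1$ is nontrivial. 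No orbit sums, no contracting directions---just the algebraic consequence of closedness combined with $H^0(\aaa_1,\pi_1^\infty)=0$. This is the step that replaces your K\"unneth vanishing in intermediate degrees, and it feeds directly into Theorem~\ref{irreda} to produce the primitive with estimates.
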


The conjecture implies that the contrast between the rank-one and higher-rank situations (for $1$-cocycles) lies in the fact that for flows, the first cohomology is the top-degree cohomology, whereas for higher-rank actions it is not.  With this conjecture in mind, one expects that cohomology classes in $H^{d}(M)$ for an Anosov or partially hyperbolic $\RR^d$-action on $M$ are determined by integrals over closed orbits; for $1\leq n \leq d-1$, one expects $H^{n}(M) \cong \RR^{\binom{d}{n}}$---the cohomology classes are determined by constant functions on $M$.

Katok and Katok proved the conjecture for $\ZZ^d$-actions by partially hyperbolic toral automorphisms \cite{KK95, KK05}.  Their strategy was to pass to a dual problem on Fourier coefficients of functions on tori.  There, the natural obstructions to the coboundary equation are distributions on the torus that are invariant under the action.  (These are referred to in the paper as \emph{invariant pseudomeasures}.)  They proved that these are a complete set of obstructions, and that these obstructions are approximated by linear combinations of invariant measures supported on periodic orbits.  The latter is an extension of a corresponding result of W. Veech for a single partially hyperbolic toral automorphism \cite{Vee86}.


\subsection{Statements of results} \label{results}

We consider the subgroup $A\cong\RR^d$ of diagonal matrices in the $d$-fold product 
\[
	\SL(2,\RR)^d := \SL(2,\RR)\times\dots\times\SL(2,\RR),
\] 
and its action $\RR^d \curvearrowright \SL(2,\RR)^{d}/\G$ on a quotient by an irreducible lattice $\G$.  Setting the following elements of the Lie algebra $\mathrm{Lie}(A) := \aaa \subset \Sl(2,\RR)^d$
\begin{align*}
	X_1 &= \left(\begin{pmatrix} 1/2 & 0 \\ 0 & -1/2 \end{pmatrix} , (0), \dots, (0) \right) \\
	X_2 &= \left((0), \begin{pmatrix} 1/2 & 0 \\ 0 & -1/2 \end{pmatrix} , (0), \dots, (0) \right) \\
		&\vdots \\
	X_d &= \left((0),\dots, (0), \begin{pmatrix} 1/2 & 0 \\ 0 & -1/2 \end{pmatrix}\right),
\end{align*} 
the coboundary equation for $d$-forms is equivalent to the problem of finding smooth functions 
\[
	g_1,\dots,g_d \in \Cinf(\SL(2,\RR)^{d}/\G)
\]
satisfying
\begin{align*}
	f &= X_1\,g_1 + \dots + X_d\,g_d
\end{align*}
for a given smooth function $f \in \Cinf(\SL(2,\RR)^{d}/\G)$.

We prove the following theorem, giving a complete set of obstructions to the top-degree coboundary equation.

\begin{theorem} \label{one}
	Let $\G \subset \SL(2,\RR)^d$ be an irreducible lattice.  If $f \in \Cinf(\Lii(\SL(2,\RR)^{d}/\G))$ is in the kernel of every $X_1,\dots,X_d$-invariant distribution, then there exist smooth functions 
	\[
		g_1,\dots,g_d \in \Cinf(\Lii(\SL(2,\RR)^{d}/\G))
	\]
	satisfying
	\begin{align*}
		f &= X_1\,g_1 + \dots + X_d\,g_d.
	\end{align*}
\end{theorem}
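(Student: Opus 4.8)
The plan is to move the equation into the unitary representation of $G:=\SL(2,\RR)^d$ on $\Lii(\SL(2,\RR)^d/\G)$, solve it in each irreducible constituent with Sobolev estimates that are uniform enough across constituents, and reassemble a smooth solution. Since the $G$-invariant volume is an $X_1,\dots,X_d$-invariant distribution, the hypothesis forces $\int f=0$, so $f$ lies in the orthogonal complement $\Lii_0$ of the constants. Decompose $\Lii_0=\bigoplus_\a H_\a$ (a Hilbert sum when $\G$ is cocompact, a direct integral otherwise) into irreducibles; each $H_\a$ is an exterior tensor product $H_\a^1\otimes\dots\otimes H_\a^d$ of irreducible unitary $\SL(2,\RR)$-representations. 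Because $\G$ is irreducible, each coordinate subgroup $\SL(2,\RR)\hookrightarrow G$ is noncompact and hence acts ergodically on $\SL(2,\RR)^d/\G$ by Moore's theorem, so no nonconstant function is fixed by a coordinate subgroup; thus every factor $H_\a^j$ occurring in $\Lii_0$ is nontrivial. As the orthogonal projection $\Lii_0\to H_\a$ is $G$-equivariant, it sends smooth vectors to smooth vectors and $X_j$-invariant distributions of $H_\a$ to $X_j$-invariant distributions of $\Lii_0$; consequently each component $f_\a$ is a smooth vector of $H_\a$ annihilated by every $X_1,\dots,X_d$-invariant distribution of $H_\a$.

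\textbf{What suffices.} It is enough to solve $f_\a=X_1 g_\a^1+\dots+X_d g_\a^d$ inside each $H_\a$ with an estimate $\norm{g_\a^j}_{s}\le C_{s}\,\norm{f_\a}_{s+L(\a)}$ for the $L^2$-Sobolev norms of the $G$-action, where the loss $L(\a)$ grows at most logarithmically (ideally, is bounded) in the Casimir eigenvalue $c_\a$ of $H_\a$. Indeed, since $f$ is smooth we have $\norm{f_\a}_t^2\le\e_N(1+c_\a)^{-N}$ for every $N$ and $t$, so $\sum_\a\norm{g_\a^j}_{s}^2<\infty$ for every $s$, whence $g^j:=\sum_\a g_\a^j$ is a smooth vector solving the coboundary equation. (In the presence of continuous spectrum one selects the $g_\a^j$ in a measurable family, which the explicit construction below permits.)

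\textbf{Solving in a single constituent.} Fix $H=H^1\otimes\dots\otimes H^d$ with all $H^j$ nontrivial. For each $j$ the space $\mathcal{I}_{X_j}(H^j)$ of $X_j$-invariant distributions of $H^j$ is finite-dimensional — a standard fact about the geodesic generator in an $\Sl(2,\RR)$-module — and $X_j$ has no nonzero invariant smooth vector in $H^j$. Expanding a distribution $\D$ on $H$ with $X_1\D=\dots=X_d\D=0$ along a basis of $\mathcal{I}_{X_1}(H^1)$ in the first variable and using the remaining equations strips the factors off one at a time, so the space of such $\D$ is exactly the finite-dimensional $\mathcal{I}_{X_1}(H^1)\otimes\dots\otimes\mathcal{I}_{X_d}(H^d)$, which is annihilated by $f_\a$. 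Now induct on $d$. The base case is the cohomological equation $Xg=f$ for the geodesic generator in an irreducible unitary $\SL(2,\RR)$-representation: a smooth solution exists precisely when $f$ is killed by the finitely many $X$-invariant distributions, with the uniform Sobolev estimate above. (Completeness of these obstructions can be read off Liv{\v{s}}ic's theorem, since a smooth function with vanishing integral over every closed geodesic is a smooth coboundary and closed-orbit measures are $X$-invariant distributions; the estimate uniform in the representation is the content of the Fourier analysis in $\Sl(2,\RR)$-modules in the manner of Flaminio--Forni.) For the inductive step, pick a basis $\D^{(1)},\dots,\D^{(m)}$ of $\mathcal{I}_{X_d}(H^d)$ and a dual family $v_1,\dots,v_m\in\Cinf(H^d)$; applying the base case in the last variable, fibered over $H^1\otimes\dots\otimes H^{d-1}$, gives $f_\a=\sum_k f_k\otimes v_k+X_d g_\a^d$ with $g_\a^d\in\Cinf(H)$ and $f_k:=(\mathrm{id}\otimes\D^{(k)})(f_\a)\in\Cinf(H^1\otimes\dots\otimes H^{d-1})$. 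Pairing $f_k$ against any $\D'\in\mathcal{I}_{X_1}(H^1)\otimes\dots\otimes\mathcal{I}_{X_{d-1}}(H^{d-1})$ produces $\inner{f_\a}{\D'\otimes\D^{(k)}}=0$, so the inductive hypothesis gives $f_k=\sum_{j<d}X_j h_{j,k}$ and hence $f_\a=\sum_{j<d}X_j\big(\sum_k h_{j,k}\otimes v_k\big)+X_d g_\a^d$. Proposition \ref{prop}, a version of the higher-rank trick, is what makes the estimates in this peeling-off procedure uniform over the constituents $H_\a$.

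\textbf{Main obstacle.} The analytic heart is the uniformity. The loss of Sobolev regularity in the one-factor cohomological equation, and in the choice of the invariant distributions $\D^{(k)}$ together with their dual vectors $v_k$, must be controlled well enough — at worst logarithmically — in the Casimir eigenvalue, or else the reassembled series need not converge to a smooth function; factors lying in the complementary series near the trivial representation (small spectral gap) form a finite exceptional set for each lattice and must be handled separately, and in the non-cocompact case the whole construction must be carried measurably through the continuous spectrum. Establishing these uniform estimates from the structure theory of $\Sl(2,\RR)$-representations is the real work; the rest is tensor-product bookkeeping.
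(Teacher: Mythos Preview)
Your inductive strategy---decompose into irreducibles, peel off one tensor factor using the base case, apply the induction hypothesis to the rest---is essentially the paper's. The paper's explicit splitting $f=f_1+f_{\bmu}$ via formula~\eqref{dfoldf}, with the exponentially decaying weight $m(\bl)$, is precisely a concrete choice of your ``dual vectors $v_k$'' that makes the Sobolev bookkeeping go through. So the skeleton is right.

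The gap is in how uniformity is obtained. You frame the required control as ``loss of regularity at most logarithmic in the Casimir eigenvalue,'' with a possible ``finite exceptional set'' of complementary-series constituents near the trivial representation to be handled separately. Neither is what happens, and neither would suffice as stated. The actual mechanism is the \emph{spectral gap}: by the theorem of Kleinbock--Margulis and Clozel (Theorem~\ref{spectralgapth}), the regular representation on $\Lii_0(\SL(2,\RR)^d/\G)$ has a uniform $\mu_0>0$ with $\sigma(\Box_i)\cap(0,\mu_0)=\emptyset$ for every $i$. This single number $\mu_0$ is what makes the constant $C_{\mu_0,s,t}$ in Theorem~\ref{irreda} uniform across \emph{all} irreducible constituents, with a \emph{fixed} loss of regularity $s\mapsto s_d$ independent of the Casimir parameter. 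There is no exceptional set: the spectral gap rules out complementary-series factors arbitrarily close to the trivial representation. Without invoking this, your reassembly step does not close; a logarithmic-in-Casimir loss in the Sobolev exponent is not obviously summable, and your claimed decay $\norm{f_\a}_t^2\le \e_N(1+c_\a)^{-N}$ for \emph{every} $t$ does not follow from smoothness of $f$ alone (it holds for $t=0$, but higher Sobolev norms of $f_\a$ need not decay in $c_\a$).

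Two smaller points. Your base case needs the quantitative Sobolev estimates of Mieczkowski (Theorem~\ref{m}), not Liv{\v{s}}ic's theorem; closed-orbit vanishing gives existence of a coboundary on the manifold, but not the representation-by-representation tame estimate you need for the induction. And your appeal to Proposition~\ref{prop} is misplaced: that proposition is the higher-rank trick used for \emph{intermediate} cohomology (Theorem~\ref{b}), not for the top-degree equation; uniformity in the top-degree argument comes instead from Lemma~\ref{sobolevnorms} together with the spectral-gap constant.
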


\begin{remark*}
	If it is true that the set of linear combinations of invariant measures supported on periodic orbits of $\RR^d \curvearrowright \SL(2,\RR)^{d}/\G$ is dense (in the weak topology) in the space $\II_{X_1,\dots,X_d}$ of invariant distributions, then Theorem \ref{one} implies that the action has the Liv{\v{s}}ic property for $d$-cocycles, as conjectured.  This would then have an application to Hilbert cusp forms, through a program, outlined by T. Foth and S. Katok, for finding spanning sets by relative Poincar{\'e} series associated to closed orbits.  This program was carried out by S. Katok for modular forms \cite{Kat85}, and later by Foth and Katok in other rank-one situations \cite{FK01}.   
\end{remark*}

The presence in Theorem \ref{one} of $\Cinf(\Lii(\SL(2,\RR)^{d}/\G))$ instead of $\Cinf(\SL(2,\RR)^{d}/\G)$ is due to the fact that, as with Katok and Katok's passage to a dual problem, we work primarily in the unitary dual of $\SL(2,\RR)^d$.  We take $\Cinf(\Lii(\SL(2,\RR)^{d}/\G))$ to be the set of smooth vectors, in the representation theoretic sense, of the left-regular representation of $\SL(2,\RR)^d$ on $\Lii(\SL(2,\RR)^{d}/\G)$.  For cocompact $\G$, the set of smooth vectors coincides with the set of smooth functions $\Cinf(\SL(2,\RR)^{d}/\G)$.

A representation theoretic version of Theorem \ref{one} (from which Theorem \ref{one} follows) appears as Theorem \ref{a} in Section \ref{mainresults}.  It is one of the main results of this paper, and is inspired by work of D. Mieczkowski \cite{M2}, where the $d=1$ case is proved, and work of L. Flaminio and G. Forni \cite{FF}, where the coboundary equation for horocycle flows is studied.  Our method relies on an inductive procedure for establishing Theorem \ref{a} for $(d+1)$-fold products, assuming it holds for $d$-fold products.  Thus, Mieczkowski's work provides our base case.

We have the following theorem for intermediate cohomology.

\begin{theorem} \label{two}
	Let $\G \subset \SL(2,\RR)^{d}$ be an irreducible lattice, $A \subset \SL(2,\RR)^{d}$ the subgroup of diagonal matrices.  Then the smooth $n$-cohomology of the $A$-action on $\SL(2,\RR)^{d}/\G$ trivializes for $1 \leq n \leq d-1$.  
\end{theorem}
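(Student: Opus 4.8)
The plan is to reduce the triviality of intermediate cohomology to the top-degree result (Theorem \ref{one}, or its representation-theoretic counterpart Theorem \ref{a}) by an inductive argument on the number of factors $d$, exploiting the product structure of $\SL(2,\RR)^d$. An $n$-cocycle for the $A$-action is a closed element of $\bigwedge^n \aaa^* \otimes \Cinf(\Lii(\SL(2,\RR)^d/\G))$, which we expand in the basis $\{dx_{i_1}\wedge\dots\wedge dx_{i_n}\}$ with coefficient functions $f_{i_1\dots i_n}$; the closedness condition $\di\o=0$ becomes a system of equations $X_j f_{i_1\dots i_n} = \pm X_{i_k} f_{\dots}$ relating cyclic permutations of indices. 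I want to show any such $\o$ is cohomologous to one with constant coefficients, and that a constant-coefficient cocycle which is a coboundary must be zero, so that $H^n \cong \RR^{\binom{d}{n}}$.

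First I would treat the ``building block'' $\SL(2,\RR)$ case and low rank: when $n=1$ this is essentially the $d=1$ instance of Theorem \ref{one} together with the Katok--Spatzier-type vanishing, giving the base of the induction. For the inductive step, suppose the statement holds for products of fewer than $d$ factors. Given a closed $n$-form $\o$ on $\SL(2,\RR)^d/\G$ with $1\le n\le d-1$, split the index set $\{1,\dots,d\}$ and separate those basis forms involving $dx_d$ from those that do not: write $\o = \o' + dx_d \wedge \o''$, where $\o'$ is built from $dx_1,\dots,dx_{d-1}$ with coefficients that still depend on all coordinates, and similarly $\o''$ is an $(n-1)$-form in $dx_1,\dots,dx_{d-1}$. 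The equations coming from $\di\o=0$ that involve $X_d$ say precisely that $X_d$ applied to a coefficient of $\o'$ equals $\pm X_i$ applied to a coefficient of $\o''$; this is exactly the hypothesis needed to run a fiberwise version of Theorem \ref{a} along the last $\SL(2,\RR)$-direction, producing a primitive in the $X_d$ variable for the ``$dx_d$-part'' after subtracting off a suitable exact form $\di\eta$. The point is that the obstructions to this partial solution are the $X_d$-invariant distributions, and closedness of $\o$ in the remaining degrees forces the relevant coefficient functions to lie in their kernels (or the invariant part gets absorbed into the constant-coefficient representative).

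After this reduction, $\o$ is cohomologous to a closed $n$-form whose coefficients are $X_d$-invariant, hence (using irreducibility of $\G$, so that $X_d$-invariant smooth vectors are constant in the $d$-th factor — one must be careful here, invariance under the full last-factor $\SL(2,\RR)$ may be needed, obtained by also killing the unstable/stable horocycle directions via the same coboundary argument) effectively a cocycle for the $A'$-action on a quotient of $\SL(2,\RR)^{d-1}$. Since $n-1 \le d-2$ as well, in each of the pieces $\o'$ and $\o''$ the degree is still in the intermediate range for the smaller product, so the inductive hypothesis applies and finishes the argument, yielding constant coefficients. The nonvanishing/exactness bookkeeping — checking that a constant-coefficient coboundary is zero — is the usual linear-algebra computation with the Chevalley--Eilenberg differential of the abelian Lie algebra $\aaa$ and contributes the $\binom{d}{n}$ count.

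I expect the main obstacle to be the passage from ``$X_d$-invariant coefficients'' to genuinely lower-dimensional data: $X_d$-invariance alone does not make a function independent of the last $\SL(2,\RR)$-factor, so one has to show that after solving the $X_d$-coboundary equation one can \emph{further} adjust by an exact form to kill the dependence on the two horocycle directions in the last factor, using representation theory of $\SL(2,\RR)$ (the structure of invariant distributions and the fact that, in each irreducible unitary component with the trivial component excluded by Theorem \ref{a}'s hypothesis, the relevant vectors cannot all be joint-invariant). Making the induction close cleanly — i.e. ensuring the hypotheses of Theorem \ref{a} (membership in the kernel of invariant distributions) are genuinely inherited by the lower-degree pieces $\o'$, $\o''$ at each stage — is the delicate point, and is presumably where Proposition \ref{prop} (the higher-rank trick) enters to show the two natural candidate primitives agree.
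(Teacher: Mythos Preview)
Your high-level architecture is right --- induction on $d$, with Theorem~\ref{a} (top degree) handling the boundary case $n=d-1$ and Proposition~\ref{prop} supplying the ``higher-rank trick'' that puts the restricted cocycle in the kernel of the invariant distributions --- and this is exactly the skeleton of the paper's proof. But the reduction step you propose has a genuine gap that you yourself flag: after solving in the $X_d$-direction you want to pass to a cocycle for an action on a quotient of $\SL(2,\RR)^{d-1}$, and there is no such quotient. The lattice $\Gamma$ is irreducible, so it does not project to a lattice in $\SL(2,\RR)^{d-1}$, and $X_d$-invariant smooth vectors in $L^2(\SL(2,\RR)^d/\Gamma)$ are \emph{not} functions on any lower-dimensional homogeneous space. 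Your suggested patch (``also kill the horocycle directions in the last factor'') would require the last-factor-invariant vectors to be nontrivial, which ergodicity rules out: the only $\SL(2,\RR)$-invariant vectors are the constants, so this route collapses the problem entirely rather than reducing its rank by one.

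The paper avoids this by never attempting a geometric reduction. Instead it works inside a single irreducible unitary representation $\HH_{\mu_1}\otimes\cdots\otimes\HH_{\mu_d}$ of $\SL(2,\RR)^d$ (via direct integral decomposition and the spectral gap, Theorem~\ref{spectralgapth}). There the analogue of ``fixing the last coordinate'' is to fix a basis vector $u_k$ in one tensor factor and look at the restricted form $(\omega_m\mid_k)$, which is an honest $n$-form over the $\RR^{d-1}$-action on the remaining tensor product $\HH_{\mu_1}\otimes\cdots\widehat{\HH_{\mu_m}}\cdots\otimes\HH_{\mu_d}$. Lemma~\ref{closed} says this restriction is still closed; Proposition~\ref{prop} says that when $n=d-1$ it lands in $\ker\II_{X_2,\dots,X_d}$, so Theorem~\ref{irreda} applies; otherwise the induction hypothesis applies directly. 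One then reassembles primitives $\eta_{m,k}$ over $k$ into $\eta_m$, does this for each omitted index $m=1,\dots,d$, and averages $\eta=\tfrac{1}{d-n+1}\sum_m\eta_m$ (your single-index reduction would only recover the components of $\omega$ not involving $dx_d$). The Sobolev estimates from Theorem~\ref{irreda} are what let the pieces glue across the direct integral and give the smooth statement.
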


\begin{remark*}
Theorem \ref{two} verifies the ``intermediate cohomology'' part of Katok and Katok's conjecture for the actions we consider.
\end{remark*}

Again, we have a representation theoretic version of Theorem \ref{two}, listed as Theorem \ref{b} in Section \ref{mainresults}.  It is our second main result.  Again, we employ an inductive procedure to establish the theorem for $d$-fold products, assuming it is true for $(d-1)$-fold products.  This induction is analogous to one used by Katok and Katok to establish the ``intermediate cohomology'' part of their conjecture for toral automorphisms \cite{KK95} .  Our base case can be taken to come from Katok and Spatzier's work applied to Anosov $\RR^2$-actions \cite{KS94first}, or Mieczkowski's results in \cite{M1}.  


\subsection{A note on the prospect of treating other cases}

Standard Anosov actions either come from: $1$) automorphisms of tori and nilmanifolds; $2$) actions on homogeneous spaces $G/\G$ by a split Cartan subgroup of $G$; or, $3$) a version of  the latter that is ``twisted'' by the former (through a construction found in \cite{KS94first}).  Our results fit into the second category, where the natural tools for addressing problems tend to be more complicated than in the first category.  For instance, the problem of computing higher-degree cohomology becomes significantly more involved, at least from a representation-theoretic point of view, once $\SL(2,\RR)$ is replaced with other semisimple Lie groups.  Even for other \emph{real-rank-one} simple Lie groups the representation theory is much more complicated, although there is some hope of adapting our induction scheme to this setting.  Such an adaptation requires not only a detailed picture of the unitary duals of these groups and their spaces of invariant distributions (akin to the pictures we have here for $\SL(2,\RR)$), but also an appreciable reinterpretation of the induction methods developed in this article.  This is the focus of a work in progress.


\subsection{Definitions} \label{definitions}

Let $\RR^d \curvearrowright M$ be a locally free action by diffeomorphisms on a smooth manifold $M$.  For $1\leq n\leq d$, we define an \emph{$n$-form over the action} to be a smooth assignment taking each point $x \in M$ to a map
\[
	\o_{x}:(T_{x} (\RR^{d}\cdot x))^n \cong (\RR^{d})^{n}\rightarrow \RR
\]
that is multi-linear and skew-symmetric, where $\RR^{d}\cdot x$ denotes the orbit of $x$, and its tangent space is denoted $T_{x}(\RR^{d}\cdot x) \subset T_{x}M$; it is naturally identified with $\RR^d$.  We use $\O_{\RR^d}^{n}(M)$ to denote the set of $n$-forms over the action $\RR^d \curvearrowright M$, often dropping the subscript when there is no risk of confusion.  

The \emph{exterior derivative} for $\RR^d \curvearrowright M$ maps $n$-forms to $(n+1)$-forms by
\[
	\di\o_{x}(V_1,\dots, V_{n+1}) := \sum_{j=1}^{n}(-1)^{j+1}\,V_{j}\,\o_{x}(V_1,\dots,\widehat{V_j},\dots,V_{n+1}),
\]
with ``$\quad\widehat{}\quad$'' denoting omission.  One can check that $\di^2=0$.

An $n$-form $\o$ is said to be \emph{closed}, and is called a \emph{cocycle}, if $\di\o=0$.  It is said to be \emph{exact}, and is called a \emph{coboundary}, if there there is an $(n-1)$-form $\eta$ satisfying $\di\eta=\o$.  Two $n$-forms are said to be \emph{cohomologous} if they differ by a coboundary.  We are interested in the set of cohomology classes, $H^{n}(M)$.  

The first cohomology $H^{1}(M)$ coincides with the set of equivalence classes of smooth $\RR$-valued cocycles in the usual dynamical sense.  That is, a smooth $\RR$-valued cocycle over the action $\RR^d \curvearrowright M$ is usually defined in dynamics as a smooth function $\a:\RR^d \times M \rightarrow \RR$ satisfying the \emph{cocycle identity}:
\[
	\a(r_1 + r_2, x) = \a(r_1, r_{2}.x)+\a(r_2,x)
\]
for all $r_{1},r_{2} \in \RR^d$ and $x \in M$.  Two smooth cocycles $\a_1$ and $\a_2$ are said to be \emph{smoothly cohomologous}, according to the usual dynamical definitions, if there is a smooth function $P:M \rightarrow \RR$ satisfying
\begin{align*}
	\a_{1}(r,x) = -P(r.x) + \a_{2}(r,x) + P(x).
\end{align*}
By differentiating $\a_{1}$ and $\a_{2}$ in directions $V \in \RR^d$, we obtain $1$-forms $\o_{1}(V)$ and $\o_{2}(V)$ in the sense described above.  That $\a_1$ and $\a_2$ satisfy the cocycle identity implies that $\o_1$ and $\o_2$ are closed.  That $\a_1$ and $\a_2$ are smoothly cohomologous is equivalent to the existence of a smooth function $P \in \Cinf(M)$, or $0$-form, satisfying $\di P = \o_{2}-\o_{1}$.


\section{Main results} \label{mainresults}

We work in the Hilbert space $\HH$ of a unitary representation of 
\[
\SL(2,\RR)^d := \SL(2,\RR) \times\dots\times \SL(2,\RR).  
\]
One of our goals is to solve the degree-$d$ coboundary equation
\begin{align} \label{cobeq}
		X_{1}\,g_1 + X_{2}\,g_2 + \dots + X_d\, g_d &= f
\end{align}
for a given $f \in \HH$ (see Section \ref{results}).

We will find that obstructions to solving equation \eqref{cobeq} come from elements of $\mathcal{E}^{\prime}(\HH)$ (the dual space of $\Cinf(\HH)$) that are invariant under the vectors $X_1,\dots,X_d$.  Let 
\[
\II_{X_1, \dots, X_d}(\HH) = \{\mathcal{D} \in \mathcal{E}^{\prime}(\HH) \mid \LLL_{X_i}\D=0 \quad \textrm{for all} \quad i=1,\dots,d \},
\]
where $\LLL_{X_i}$ is the Lie derivative operator.  The condition that $\LLL_{X_i}\D = 0$ is equivalent to the condition that $\D(X_i\,h)=0$ for all $h \in \Cinf(\HH)$.  The set $\II_{X_1,\dots,X_d}$ is exactly the set of distributions that are invariant under all of $X_1, \dots, X_d$.  

In fact, most of our work takes place in \emph{Sobolev spaces} for the representation on $\HH$.  The Sobolev space $W^\t (\HH)$ of order $\t \in \RR^{+}$ is the maximal domain in $\HH$ of the operator $(I + \Delta)^{\t/2}$, where $I$ is the identity operator and $\Delta$ is the Laplacian operator from $\SL(2,\RR)^d$.  $W^\t (\HH)$ is itself a Hilbert space, with inner product $\langle f,g \rangle_{\t} := \langle(I+\Delta)^{\t}f,g \rangle_{\HH}$.  The dual space of $W^\t (\HH)$ is denoted by $W^{-\t}(\HH)$, and is a subspace of the space $\mathcal{E}^{\prime}(\HH)$ of distributions.  We will find that obstructions to solving the coboundary equation in Sobolev spaces come from elements of 
\[
\II_{X_1, \dots, X_d}^{\t}(\HH) = \{\mathcal{D} \in W^{-\t}(\HH) \mid \LLL_{X_i}\D=0 \quad \textrm{for all} \quad i=1,\dots,d \},
\]
the set of distributions of Sobolev order $\t \in \RR$ that are $X_1,\dots,X_d$-invariant.

It is obviously necessary for $f$ to be in the kernel of all such distributions.  We show that this is also sufficient.

\begin{result}[Top-degree cohomology] \label{a}
	Let $\HH$ be the Hilbert space of a unitary representation of $\SL(2,\RR)^d$.  If there exists $\mu_0 > 0$ such that $\s(\Box_{i}) \cap (0,\mu_0) = \emptyset$ for all $i=1,\dots, d$, then the following holds.  For any $s > 1$, and $t < s-1$, there is a constant $C_{\mu_0,s,t}$ such that, for every $f \in \ker \II_{X_1,\dots,X_d}^{s_d}(\HH)$, where $s_d = 2^{d-1}s + \sum_{i=1}^{d-1}2^{i-1}(2s+d-i)$, there exist $g_1,\dots, g_d \in W^{t}(\HH)$ satisfying the coboundary equation \eqref{cobeq} for $f$, and satisfying the Sobolev estimates
\begin{align*} 
	\left\| g_{i} \right\|_{t} &\leq C_{\mu_0,s,t}\, \left\| f \right\|_{s_d}
\end{align*}  
for $i=1, \dots, d$.
\end{result}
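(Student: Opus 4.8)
The plan is to prove Theorem \ref{a} by induction on the number $d$ of factors, with Mieczkowski's $d=1$ result \cite{M2} (suitably restated as a parametrized statement for an arbitrary unitary representation of $\SL(2,\RR)$ with a spectral gap) as the base case. Fix $d\geq 1$, assume the statement for $d$-fold products, and prove it for $\SL(2,\RR)^{d+1}$, which I write as $\SL(2,\RR)^{d}\times\SL(2,\RR)$, reserving the last factor for the induction. Decomposing the given unitary representation $\HH$ of $\SL(2,\RR)^{d+1}$ as a representation of the last factor yields a direct integral
\[
	\HH \;\cong\; \int^{\oplus}_{\widehat{\SL(2,\RR)}}\mathcal{K}_{\pi}\,\widehat{\otimes}\,\HH_{\pi}\;\di\nu(\pi),
\]
where $\HH_\pi$ carries the irreducible $\pi$ and the multiplicity space $\mathcal{K}_\pi$ carries a unitary representation of $\SL(2,\RR)^d$; the operators $\Box_1,\dots,\Box_d$ and $X_1,\dots,X_d$ act on the $\mathcal{K}_\pi$, while $\Box_{d+1},X_{d+1}$ act on the $\HH_\pi$. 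Since $\s(\Box_i\vert_{\mathcal{K}_\pi})\subseteq\s(\Box_i\vert_{\HH})$, the hypothesis $\s(\Box_i)\cap(0,\mu_0)=\emptyset$ passes to each fibre, so the inductive hypothesis is available on every $\mathcal{K}_\pi$.

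Next I split $f$ according to whether the last factor acts trivially. On the part $\mathcal{K}_{\mathrm{triv}}$ corresponding to the trivial representation, $X_{d+1}$ acts as $0$, so the coboundary equation collapses to $f\vert_{\mathcal{K}_{\mathrm{triv}}}=X_1g_1+\dots+X_dg_d$ on a representation of $\SL(2,\RR)^d$ with the required gap. Composing any $X_1,\dots,X_d$-invariant distribution on $\mathcal{K}_{\mathrm{triv}}$ with the equivariant orthogonal projection $\HH\to\mathcal{K}_{\mathrm{triv}}$ produces an $X_1,\dots,X_{d+1}$-invariant distribution on $\HH$ of the same Sobolev order, so the hypothesis on $f$ forces $f\vert_{\mathcal{K}_{\mathrm{triv}}}\in\ker\II^{s_d}_{X_1,\dots,X_d}(\mathcal{K}_{\mathrm{triv}})$; the inductive hypothesis then solves this part with the asserted estimates. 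The substance lies in the complementary piece $h:=f-f\vert_{\mathcal{K}_{\mathrm{triv}}}$, supported on the $\pi\neq\mathrm{triv}$ fibres, where $X_{d+1}$ has dense but non-closed range.

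For $h$ the idea is to solve along the last factor modulo its obstructions, and then clear those obstructions using the first $d$ directions. By Mieczkowski's classification, the $X_{d+1}$-invariant distributions on each nontrivial $\HH_\pi$ span a finite-dimensional space, with a basis $\D^\pi_1,\dots,\D^\pi_{N_\pi}$ of controlled Sobolev order; contracting $h$ against $\D^\pi_j$ in the last variable gives, for each $j$, a vector $h^{(j)}_\pi$ on the first $d$ factors (of a definite Sobolev order depending on $\mu_0$ and $s$). The condition that $f$ be annihilated by every $X_1,\dots,X_{d+1}$-invariant distribution translates, fibrewise, into $h^{(j)}_\pi\in\ker\II_{X_1,\dots,X_d}(\mathcal{K}_\pi)$ for a.e.\ $\pi$ and all $j$ (the joint invariant distributions being assembled from $\D\otimes\D^\pi_j$ with $\D$ an $X_1,\dots,X_d$-invariant distribution on $\mathcal{K}_\pi$). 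The inductive hypothesis therefore yields $h^{(j)}_\pi=X_1p^\pi_{j,1}+\dots+X_dp^\pi_{j,d}$ with Sobolev estimates. Choosing, for each nontrivial $\pi$, smooth vectors $v^\pi_1,\dots,v^\pi_{N_\pi}\in\HH_\pi$ dual to the $\D^\pi_i$, and setting
\[
	c\;:=\;\int^{\oplus}\!\sum_j h^{(j)}_\pi\otimes v^\pi_j\;\di\nu(\pi)\;=\;\sum_{k=1}^{d}X_k\Bigl(\int^{\oplus}\!\sum_j p^\pi_{j,k}\otimes v^\pi_j\;\di\nu(\pi)\Bigr),
\]
one checks that $h-c$ lies in the kernel of every $X_{d+1}$-invariant distribution on the nontrivial part, since by design $(\mathrm{id}\otimes\D^\pi_j)(h-c)=0$ for all $j$. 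The parametrized $d=1$ result in the last factor then gives $g_{d+1}$ with $X_{d+1}g_{d+1}=h-c$. Collecting the solution from the trivial part, the $c$-correction, and $g_{d+1}$ produces $f=X_1g_1+\dots+X_{d+1}g_{d+1}$.

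The main obstacle is quantitative: the fibrewise constructions — the invariant distributions $\D^\pi_j$, the dual vectors $v^\pi_j$, and the base-case solution operators — must be chosen to depend measurably on $\pi$ with norms controlled uniformly over the unitary dual. Near the trivial representation this is precisely what $\s(\Box_i)\cap(0,\mu_0)=\emptyset$ secures, keeping the complementary series away from the endpoint where the invariant distributions degenerate; but the non-compact directions of $\widehat{\SL(2,\RR)}$ remain — the principal series with large parameter, and, more delicately, the discrete series of large weight, where the Casimir eigenvalue grows and the natural normalizations of the $v^\pi_j$ cost powers of that eigenvalue. Absorbing those powers by spending Sobolev regularity is what makes the recursion lossy, giving $s_{d+1}=2s_d+2s+d$ (equivalently the closed form for $s_d$ in the statement). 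A further technical layer is the direct-integral bookkeeping needed to certify that the assembled $g_1,\dots,g_{d+1}$ are genuine elements of the stated Sobolev spaces and obey the uniform estimates; a higher-rank-trick-type proposition is used to reconcile the contributions. I expect the discrete-series uniformity, together with the attendant Sobolev accounting, to be the hard part; the rest is a matter of organizing the representation theory of $\SL(2,\RR)$ along the lines of \cite{FF} and \cite{M2}.
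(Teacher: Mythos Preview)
Your strategy is essentially the paper's: induct on $d$ with Mieczkowski as the base case, and at the inductive step split $f$ into a piece handled by the $d=1$ result in one factor and a piece handled by the inductive hypothesis in the remaining $d$ factors, the splitting being effected by pairing against the invariant distributions and then re-inserting ``dual vectors.'' The differences are organizational. First, the paper reduces at the outset to \emph{irreducible} representations of $\SL(2,\RR)^{d+1}$ (Theorem~\ref{irreda}) and performs the single direct-integral assembly only at the very end; you instead carry a general unitary representation through the induction and take a partial direct integral over the last factor at each step. The paper's route pays off in concreteness: in an irreducible $\HH_1\otimes\HH_{\bmu}$ your abstract dual vectors $v^\pi_j$ become the explicit vectors with coefficients $m({\bl})/\D_{\bn}^{\HH_{\bmu}}(v_{i_{\bnu}+2{\bl}+{\bn}})$, where $m$ is any exponentially decaying weight with $\sum m({\bl})=1$ (formula~\eqref{dfoldf}), and the quantitative obstacle you flag---uniform control of these duals over the unitary dual, including the discrete series---is dispatched by the pointwise bounds of Lemma~\ref{degreeoneblue}, yielding the Sobolev estimate $\|f_1\|_s,\|f_{\bmu}\|_s\leq C_{\nu_0,s}\|f\|_{2s+d}$ (Lemma~\ref{sobolevnorms}) and hence the recursion $s_{d+1}=2s_d+2s+d$ you correctly anticipate. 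Second, the paper peels off the \emph{first} factor for Mieczkowski and inducts on the last $d$, while you do the reverse; this is cosmetic. Your treatment of the trivial sub-representation is a harmless detour: once one is inside an irreducible with all Casimir parameters $>\mu_0$, it does not arise. In short, your proposal is correct and would work, but the paper's irreducible-first organization replaces your measurability and fibrewise-uniformity bookkeeping with a single explicit computation.
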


\begin{remark*}
	There are precise definitions for $\Box_i$ and $\mu_0$ in Section \ref{gap}.  For now, it is worth remarking that the condition in Theorem \ref{a} involving these is a ``spectral gap'' condition on the representation of $\SL(2,\RR)^d$ on $\HH$ for the Casimir operators $\Box_i$ corresponding to the $d$ copies of $\SL(2,\RR)$.  Later, we will do most of our work in irreducible unitary representations, and the process of building global solutions for (reducible) unitary representations will depend on this spectral gap assumption.
\end{remark*}

Theorem \ref{a} is a generalization of the following result of Mieczkowski \cite{M2}, which provides the base case $d=1$ for an induction argument in our proof.  

\begin{theorem}[Mieczkowski] \label{m}
	Let $\HH_\mu$ be the Hilbert space of an irreducible unitary representation of $\SL(2,\RR)$, and $s > 1$.  If $\mu > \mu_0 > 0$ then there exists a constant $C_{\mu_0,s,t}$ such that, for all $f \in W^{s}(\HH_\mu)$,
	\begin{itemize}
		\item if $t < -1$, or 
		\item if $t < s-1$ and $\D(f)=0$ for all $\D \in \II^{s}(\HH_\mu)$, 
	\end{itemize}
	then the equation $X\,g = f$ has a solution $g \in W^{t}(\HH_\mu)$, which satisfies the Sobolev estimate $\left\| g \right\|_{t} \leq C_{\mu_0,s,t} \left\| f \right\|_{s}$.
\end{theorem}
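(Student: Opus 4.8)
The plan is to work in an explicit Fourier model of the irreducible representation $\HH_\mu$, reduce the scalar coboundary equation $X\,g=f$ to a two-term recurrence among the $\SO(2)$-weight coefficients, solve the recurrence by telescoping, and locate the obstruction in the few one-dimensional pieces where the telescoping fails to produce a genuine element of the relevant Sobolev space. This is the approach of Mieczkowski \cite{M2}, modelled on Flaminio and Forni's treatment of the horocycle flow \cite{FF}. One begins with the classification of the irreducible unitary representations of $\SL(2,\RR)$ into the principal, complementary, and discrete series (and their limits), together with the trivial representation; the Casimir $\Box$ acts on $\HH_\mu$ by the scalar $\mu$, and the hypothesis $\mu>\mu_0>0$ discards the trivial representation and keeps the complementary-series parameter bounded away from its degenerate endpoint. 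In each family one has an orthonormal basis $\{u_n\}$ of $\SO(2)$-weight vectors, with $n$ ranging over a coset of $2\ZZ$ (principal and complementary series) or a half-line in such a coset (discrete series); expressing the Laplacian through $\Box$ and the compact generator gives $\norm{h}_r^2\asymp\sum_n(1+n^2)^r\,\abs{\inner{h}{u_n}}^2$ with constants depending only on $\mu_0$. Using the raising and lowering operators $\eta^{\pm}=X\pm iY$, which shift the weight by $\pm 2$, one writes $X=\tfrac12(\eta^{+}+\eta^{-})$ and computes
\[
	X\,u_n=\alpha_n\,u_{n+2}-\overline{\alpha_{n-2}}\,u_{n-2},
\]
where $\abs{\alpha_n}$ is an explicit function of $n$ and $\mu$ with $\abs{\alpha_n}\asymp\abs{n}$ as $\abs{n}\to\infty$, uniformly in $\mu>\mu_0$, and the relation between the two matrix entries is forced by the skew-adjointness of $X$. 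Consequently $X\,g=f$ is equivalent to the two-term recurrence
\[
	\alpha_{n-2}\,\widehat g_{n-2}-\overline{\alpha_n}\,\widehat g_{n+2}=\widehat f_n,
\]
which decouples into independent scalar recurrences indexed by the residue of the weight modulo $4$, with $\widehat g_n=\inner{g}{u_n}$ and $\widehat f_n=\inner{f}{u_n}$.

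The homogeneous recurrence has solution space of dimension at most two, and, $X$ being skew-adjoint, its formal solutions are exactly the Fourier expansions of the $X$-invariant distributions on $\HH_\mu$. Estimating the telescoping products of the coefficient ratios $\abs{\alpha_\ell}/\abs{\alpha_{\ell-2}}$ (which tend to $1$) shows that a nonzero homogeneous solution decays like $\abs{n}^{-1/2}$ at one end of the weight lattice and grows like $\abs{n}^{1/2}$ at the other (in the discrete series there is only one such direction, decaying at the single endpoint); hence every $X$-invariant distribution lies in $W^{-r}(\HH_\mu)$ for each $r>1$, so that $\II^{s}(\HH_\mu)=\II(\HH_\mu)$ once $s>1$.

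For a particular solution I would telescope the inhomogeneous recurrence inward from $n=+\infty$, with vanishing data there, to obtain coefficients $\widehat g^{\,+}_n$, and likewise inward from $n=-\infty$ to obtain $\widehat g^{\,-}_n$. Because $s>1$, the bound $\abs{\widehat f_n}\leq C_s(1+n^2)^{-s/2}\norm{f}_s$ makes these series absolutely convergent, and crude estimates on the telescoping products place each $g^{\pm}$ in $W^t(\HH_\mu)$ with $\norm{g^{\pm}}_t\leq C_{\mu_0,s,t}\norm{f}_s$ throughout the range $t<-1$; since adding a homogeneous solution does not leave $W^t$ for $t<-1$, this already gives the unconditional part of the theorem. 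The difference $g^{+}-g^{-}$ solves the homogeneous recurrence, hence is a combination of the basic invariant distributions whose coefficients are explicit continuous linear functionals of $f$; identifying these, via the Sobolev pairing, with the evaluations $\D(f)$ for $\D\in\II^{s}(\HH_\mu)$ shows that when $f\in\ker\II^{s}(\HH_\mu)$ one has $g^{+}=g^{-}$. The common value then inherits the rapid one-sided decay of \emph{both} constructions, and a more careful accounting of the weight-ratio products upgrades the estimate to $\norm{g}_t\leq C_{\mu_0,s,t}\norm{f}_s$ for every $t<s-1$.

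The main difficulty is this last step: squeezing the sharp Sobolev loss — from order $s$ down to any $t<s-1$ — out of the telescoped recurrence, and matching it precisely with the pairing against the invariant distributions of Sobolev order $s$. This hinges on precise asymptotics of the products $\prod_\ell\alpha_\ell/\overline{\alpha_{\ell-2}}$, and it is exactly here that the spectral gap $\mu>\mu_0$ is used, both to keep those asymptotics and to keep the equivalence of $\norm{\cdot}_r$ with the weighted $\ell^2$ norm uniform in $\HH_\mu$; a separate, simpler treatment is needed for the discrete series, where the weight lattice is a half-line and supports at most one invariant distribution.
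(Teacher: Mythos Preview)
Your outline is correct and follows essentially the same route as the paper (and Mieczkowski \cite{M2}): reduce $X\,g=f$ to a two-term difference equation in the $\SO(2)$-weight basis, analyse the homogeneous solutions, build a particular solution by summing from one end, and use the vanishing of $\D(f)$ to flip the tail sum and extract the sharp Sobolev gain $t<s-1$. The paper packages the particular solution via a Green's function built from the two homogeneous solutions $g^0,g^1$, whereas you describe two one-sided telescopes $g^{\pm}$ and argue they coincide on $\ker\II^s$; these are equivalent presentations of the same mechanism. One small imprecision worth fixing: the objects $g^+-g^-$ are formal \emph{vectors} (homogeneous solutions), not distributions; the $X$-invariant distributions are their duals, and it is the \emph{coefficients} of $g^+-g^-$ in the basis $\{g^0,g^1\}$ that equal $\D_\sigma(f)$ up to normalisation. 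Also, your asymptotic ``decays like $\abs{n}^{-1/2}$ at one end, grows like $\abs{n}^{1/2}$ at the other'' is not quite how it plays out in the principal series, where the explicit $g^0,g^1$ are symmetric in $k\mapsto -k$ and both behave like $\abs{k}^{-1/2}$ (Lemma~\ref{mlemmaprincipal}); the relevant growth--decay dichotomy is between the homogeneous solutions and the invariant-distribution coefficients, not within a single $g^\sigma$.
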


\begin{remark*}
One can state a version of Theorem \ref{m} for any unitary representation of $\SL(2,\RR)$ that has a spectral gap for the Casimir operator $\Box$, by picking $\mu_0$ to work in every non-trivial sub-representation, as we have done with our statement of Theorem \ref{a}.  We have chosen to state the irreducible version of Theorem \ref{m} because it will be applied directly as the base case of our induction.

We also remark that Theorem \ref{m} is only proved for $\PSL(2,\RR)$ in \cite{M2}.  But some calculations show that it is also valid for the unitary dual of $\SL(2,\RR)$.  These are in Appendix \ref{appendix}.
\end{remark*}

Theorems \ref{a} and \ref{m}, and their methods of proof, are inspired by the following analogous theorem of Flaminio and Forni, for horocycle flows \cite{FF}.  
	
\begin{theorem}[Flaminio--Forni] \label{ff}
		Let
		\[
			U:= \begin{pmatrix}0 & 1 \\ 0 & 0 \end{pmatrix} \in \Sl(2,\RR),
		\]
		and $\HH$ the Hilbert space of a unitary representation of $\PSL(2,\RR)$ such that there exists $\mu_0 >0$ with $\s(\Box) \cap (0,\mu_0) = \emptyset$.  Then the following holds.  Let $\nu_0 \in [0,1)$ be defined by
		\[
			\nu_0 := \begin{cases}\sqrt{1-4\mu_0} &\textrm{if } \mu_0 < \frac{1}{4}; \\ 0 &\textrm{if } \mu_0 \geq \frac{1}{4}. \end{cases}
		\]
		Let $s>\frac{1+\nu_0}{2}$ and $t \in \RR$.  There exists a constant $C:=C_{\nu_{0},s,t}$ such that, for all $f \in W^{s}(\HH)$,
		\begin{itemize}
  			\item if $t < -\frac{1+\nu_0}{2}$ and $f$ has no component on the trivial sub-representation of $\HH$, or
  			\item if $t < s-1$ and $D(f)=0$ for all $D \in \II_{U}^{s}(\HH_{\mu})$,
		\end{itemize}
		then the equation $U \, g = f$ has a solution $g \in W^{t}(\HH_{\mu})$, which satisfies the Sobolev estimate
		\[
			\left\|g \right\|_{t} \leq C_{\mu_{0},s,t} \left\|f \right\|_{s}.
		\]
		A solution $g \in W^{t}(\HH)$ of the equation $U\,g=f$ is unique modulo the trivial sub-representa\-tion if and only if $t \geq -\frac{1-\nu_0}{2}$.
\end{theorem}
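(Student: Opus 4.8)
\textbf{Proof proposal for the Flaminio--Forni theorem (Theorem \ref{ff}).}

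The plan is to reduce to irreducible unitary representations of $\PSL(2,\RR)$ and solve the equation $U\,g=f$ model by model using explicit coordinates on the line spanned by $U$. First I would recall the classification of the irreducible unitary dual: the principal series (parameter $\mu \geq \tfrac14$, with $\nu := \sqrt{1-4\mu}$ purely imaginary), the complementary series ($0 < \mu < \tfrac14$, $\nu = \sqrt{1-4\mu}\in(0,1)$), the discrete series and limits of discrete series, and the trivial representation. The spectral gap hypothesis $\s(\Box)\cap(0,\mu_0)=\emptyset$ means every nontrivial irreducible constituent has Casimir parameter $\mu \geq \mu_0$, hence $\nu \leq \nu_0 < 1$ in the complementary range (and $\nu$ imaginary otherwise). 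By the spectral decomposition of $\HH$ with respect to the Casimir operator, it suffices to prove the estimate with a uniform constant on each irreducible piece and then reassemble; this is where the gap is used, exactly as in the remark following Theorem \ref{a}. Within a fixed irreducible $\HH_\mu$, I would use the horocycle-flow model in which $U$ acts as a first-order differential operator in a single real variable, diagonalizing the geodesic flow; in these coordinates the equation $U\,g=f$ becomes an ODE $g' = f$ along the real line (or half-line) with a weight determined by $\nu$.

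Next I would carry out the construction of the solution. Writing $f$ in the unitary model, a formal primitive is $g(x) = \int_{-\infty}^{x} f(y)\,\di y$ (or the analogous expression adapted to the model), and the two candidate obstructions to $g\in\HH$, or to $g$ being in the right Sobolev space, are: (i) the total integral $\int f$, which is precisely the pairing of $f$ against the single invariant distribution (the $U$-invariant functional, an element of $\II_U^s(\HH_\mu)$) in the complementary/principal range, and (ii) growth of the primitive at infinity controlled by the weight $\nu$. The key technical estimates are Sobolev estimates: I would express the Sobolev norms $\|\cdot\|_\t$ in the model via the operators built from $U$, its opposite $\bar U$, and the Casimir, and show that if $f\in W^s(\HH_\mu)$ with $s>\tfrac{1+\nu_0}{2}$ and $\D(f)=0$ for the invariant distribution $\D\in\II_U^s(\HH_\mu)$, then the primitive $g$ lies in $W^t(\HH_\mu)$ for $t<s-1$, with $\|g\|_t \leq C_{\nu_0,s,t}\|f\|_s$; and that if instead one only knows $f$ has no trivial component, one still gets $g\in W^t$ for $t<-\tfrac{1+\nu_0}{2}$ because the borderline decay of the invariant distribution sits at Sobolev order $-\tfrac{1+\nu_0}{2}$, so below that threshold the obstruction disappears. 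The constants must be shown uniform over $\mu\geq\mu_0$, which is why only $\nu_0$ (equivalently $\mu_0$) enters.

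For uniqueness, I would analyze the homogeneous equation $U\,g=0$ in $W^t(\HH)$. A distributional solution of $U\,g=0$ is a $U$-invariant element; in each nontrivial irreducible $\HH_\mu$ the space of $U$-invariant distributions is one-dimensional and its invariant vector has Sobolev order exactly $-\tfrac{1-\nu_0}{2}$ (this is the ``other'' invariant distribution, the one detecting the forward rather than two-sided average), while on the trivial sub-representation constants solve $U\,g=0$ for all $t$. Hence for $t < -\tfrac{1-\nu_0}{2}$ there is genuine non-uniqueness beyond the trivial part coming from these invariant vectors entering $W^t$, whereas for $t\geq -\tfrac{1-\nu_0}{2}$ the only solutions of $U\,g=0$ in $W^t(\HH)$ come from the trivial sub-representation, giving uniqueness modulo it. Putting the existence estimate and this uniqueness dichotomy together yields the statement.

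\textbf{Main obstacle.} The hard part is the careful Sobolev bookkeeping in the unitary models: obtaining the primitive's norm bound $\|g\|_t \leq C_{\nu_0,s,t}\|f\|_s$ with a constant that is uniform as $\mu\to\mu_0^+$ and, separately, as $\mu\to\infty$, and correctly identifying the two critical Sobolev thresholds $\pm\tfrac{1\pm\nu_0}{2}$ as the orders of the two $U$-invariant distributions. In the complementary series near the bottom of the gap the weight degenerates and the integrals defining both the solution and the invariant distributions become delicate; controlling these uniformly, and matching the threshold exponents to the decay of the invariant distributions so that the ``no trivial component'' case and the ``kills the invariant distribution'' case fit together, is the technical core. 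The discrete-series and limit-of-discrete-series pieces should be easier, since there the invariant distribution is absent (or the relevant obstruction vanishes) and the primitive lands in every Sobolev space with room to spare, but they still need to be folded into the same uniform framework.
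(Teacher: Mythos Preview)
The paper does not prove Theorem~\ref{ff}; it is quoted from \cite{FF} as background and inspiration for Theorems~\ref{a} and~\ref{m}, so there is no ``paper's own proof'' to compare against. That said, two remarks are in order.

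First, your overall architecture---reduce to irreducibles via the direct integral, solve the cohomological equation model by model with uniform constants, then reassemble---is the right one and is exactly what Flaminio and Forni do. However, their actual implementation does not use the line model where $U$ acts as $\di/\di x$; instead they work in the $K$-finite basis $\{u_k\}$ (the same basis used throughout this paper), where $U=\Theta - Y$ acts by an explicit three-term recurrence, and they solve the resulting difference equation. The line model you describe is a legitimate alternative route, but the Sobolev bookkeeping there is genuinely different (Fourier analysis in the $x$-variable rather than asymptotics of products along the $K$-spectrum), and you would have to redevelop the estimates from scratch rather than borrow from \cite{FF}.

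Second, there are factual errors in your description of the invariant distributions that would derail the argument as written. In each principal or complementary series representation the space $\II_U(\HH_\mu)$ is \emph{two}-dimensional, not one-dimensional: there are distributions $\D^{\pm}$ of Sobolev orders $\tfrac{1\pm\Re\nu}{2}$, and it is precisely this pair that produces the two thresholds $\pm\tfrac{1\pm\nu_0}{2}$ in the statement. Your existence argument mentions only ``the single invariant distribution,'' which would miss an obstruction, and your uniqueness argument asserts the space is one-dimensional, which is false. Likewise, in each (holomorphic) discrete series representation there \emph{is} a $U$-invariant distribution (one, not zero), so those pieces are not obstruction-free as you suggest. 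Getting the count and the Sobolev orders of these distributions right is exactly the ``technical core'' you identify, and the proposal as stated would not go through without correcting this.
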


\begin{remark*}
Theorem \ref{ff} was also only proved for representations of $\PSL(2,\RR)$.  However, it remains valid if $\PSL(2,\RR)$ is replaced with $\SL(2,\RR)$.  Again, this is just a matter of carrying out some calculations in the irreducible unitary representations of $\SL(2,\RR)$ that are not irreducible unitary representations of $\PSL(2,\RR)$.  
\end{remark*}

Before stating the next result, let us briefly define forms in the context of representations.  Let $\HH$ be the Hilbert space of a unitary representation of $\SL(2,\RR)^d$.  We take an $n$-form (of Sobolev order at least $\t$) over the $\RR^d$-action on $\HH$ to be a map 
\[
\o:(\mathrm{Lie}(\RR^{d}))^n \rightarrow W^{\t}(\HH)
\] 
which is linear and anti-symmetric.  The exterior derivative, cocycles, coboundary equation, and cohomology are then defined in the usual way.  Use $\O_{\RR^d}^{n}(W^{\t}(\HH))$ to denote the set of $n$-forms of Sobolev order $\t$ over the $\RR^d$-action on $\HH$.  We prove the following.

\begin{result}[Intermediate cohomology] \label{b}
Let $\HH$ be the Hilbert space of a unitary representation of $\SL(2,\RR)^d$ such that almost every irreducible representation appearing in its direct decomposition has no trivial factor.  If there exists $\mu_0 > 0$ such that $\s(\Box_{i}) \cap (0,\mu_0) = \emptyset$ for all $i=1,\dots, d$, then the following holds.  Let $1\leq n\leq d-1$.  For any $s > 1$ and $t < s-1$, there is a constant $C_{\mu_0,s,t}$ such that, for any $n$-cocycle $\o \in \O_{\RR^d}^{n} (W^{s_d}(\HH))$, there exists $\eta \in \O_{\RR^d}^{n-1} (W^{t}(\HH))$ with $\di\eta = \o$ and
\begin{align*}
	\left\|\eta(X_{i_1},\dots,X_{i_{n-1}})\right\|_{t} &\leq C_{\mu_0,s,t}\min\left\{\left\| \o(X_{j_1},\dots,X_{j_n}) \right\|_{s_d}\right\}
\end{align*}
for all multi-indices $1\leq i_1 < \dots < i_{n-1} \leq d$, where the minimum is taken over all multi-indices $1\leq j_1 < \dots < j_{n} \leq d$ that become $i_1, \dots, i_{n-1}$ after omission of one index.
\end{result}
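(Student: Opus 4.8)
The plan is to reduce Theorem~\ref{b} to Theorem~\ref{a} by an induction on the number $d$ of factors, mirroring the Katok--Katok induction for toral automorphisms. First I would set up the base case: for $d=2$, an intermediate $n$-cocycle means $n=1$, i.e. a closed $1$-form $\o$ over the $\RR^2$-action, and the claim is that it is a coboundary $\di g = \o$ with $g$ a single function. Writing $f_i = \o(X_i)$, the closedness condition $\di\o=0$ reads $X_1 f_2 = X_2 f_1$, and we must solve $X_i g = f_i$ simultaneously. This is precisely the first-cohomology statement for Anosov $\RR^2$-actions, which I will import from Mieczkowski~\cite{M1} (or from Katok--Spatzier~\cite{KS94first}), together with the requisite Sobolev estimates, after reducing to irreducible representations using the spectral-gap hypothesis exactly as in the passage from Theorem~\ref{m} to Theorem~\ref{a}.

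For the inductive step, assume Theorem~\ref{b} holds for $(d-1)$-fold products and let $\o \in \O^n_{\RR^d}(W^{s_d}(\HH))$ be an $n$-cocycle, $1 \le n \le d-1$. I would split $\o$ according to whether the last direction $X_d$ is involved. Concretely, write $\o = \o' + X_d^\ast \wedge \o''$ where $\o'$ collects the components $\o(X_{j_1},\dots,X_{j_n})$ with all $j_k \le d-1$ and $\o''$ is the $(n-1)$-form over the first $d-1$ directions given by $\o''(X_{j_1},\dots,X_{j_{n-1}}) = \o(X_{j_1},\dots,X_{j_{n-1}},X_d)$. The cocycle condition $\di\o=0$ decomposes into: (i) $\o'$ is closed as a form over the $\RR^{d-1}$-action for each fixed value along the $X_d$-orbit, i.e. $\di'\o' = 0$; (ii) a compatibility relation $\di'\o'' = \pm X_d \o'$ (Lie derivative of $\o'$ along $X_d$ equals $\di'$ of $\o''$, up to sign). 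The strategy is: first use the case $d-1$ of Theorem~\ref{b} (when $n \le d-1$, so $\o'$ is an intermediate-or-top cocycle for the $(d-1)$-action — here I must be careful, because if $n = d-1$ then $\o'$ is a \emph{top}-degree form for $\RR^{d-1}$ and I need Theorem~\ref{a} instead, checking that the invariant-distribution obstructions vanish because of the no-trivial-factor hypothesis) to write $\o' = \di'\xi$ for some $(n-1)$-form $\xi$ over $\RR^{d-1}$; then correct $\o''$ to $\tilde\o'' = \o'' \mp X_d\xi$, which by (ii) is now $\di'$-closed; then apply the inductive hypothesis again to $\tilde\o''$ (an $(n-1)$-cocycle for $\RR^{d-1}$, or top-degree if $n-1 = d-1$, again invoking Theorem~\ref{a}) to get $\tilde\o'' = \di'\zeta$; and finally assemble $\eta = \xi + X_d^\ast \wedge \zeta$ and verify $\di\eta = \o$. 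All Sobolev estimates propagate through with the stated loss encoded in the exponent $s_d = 2^{d-1}s + \sum_{i=1}^{d-1} 2^{i-1}(2s+d-i)$; I would check that the recursion $s_d \mapsto s_{d-1}$ obtained from two applications of the inductive hypothesis plus one application of Theorem~\ref{a} closes up with exactly this formula, which is presumably why it has that shape.

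The main obstacle I anticipate is the bookkeeping at the boundary of the induction, namely handling the components of $\o$ that involve $X_d$ together with the top-degree Theorem~\ref{a} correctly. When $n=d-1$, the "horizontal" part $\o'$ is a top-degree $(d-1)$-form for the $\RR^{d-1}$-subaction, so its primitive $\xi$ is produced by Theorem~\ref{a}, \emph{not} by the inductive version of Theorem~\ref{b}; for Theorem~\ref{a} to apply I must verify that $\o'$ (fiberwise, in each irreducible component) lies in the kernel of every $X_1,\dots,X_{d-1}$-invariant distribution. This should follow from the hypothesis that almost every irreducible summand has no trivial factor: in such a representation the only invariant distributions for the $(d-1)$-action are controlled by Theorem~\ref{a}'s framework, and $X_d$-equivariance of $\o'$ (relation (ii)) forces the pairing with them to vanish — but making this rigorous, uniformly in the direct-integral decomposition and with the right Sobolev orders, is where the real care is needed. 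A secondary nuisance is verifying antisymmetry and that the reassembled $\eta$ has no hidden $\di$-contributions mixing the two pieces; this is routine multilinear algebra on $\Lambda^\bullet(\RR^d) = \Lambda^\bullet(\RR^{d-1}) \oplus X_d^\ast \wedge \Lambda^{\bullet-1}(\RR^{d-1})$ but must be written out to get the signs and the minimum-over-multi-indices estimate right.
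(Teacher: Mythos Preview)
Your base case and your identification of the crux are both right: when $n=d-1$ the ``horizontal'' piece $\o'$ is a top-degree form for the $\RR^{d-1}$-subaction, and one must show it lies in the kernel of every $X_1,\dots,X_{d-1}$-invariant distribution before Theorem~\ref{a} can be invoked. This is exactly the paper's Proposition~\ref{prop}, proved by the argument you sketch---pairing with an invariant distribution $\D$ and using $X_d\bar\D(\o')=\pm\bar\D(\di'\o'')=0$ together with the no-trivial-factor hypothesis to conclude $\bar\D(\o')=0$.

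The inductive organization, however, differs genuinely from the paper's. The paper does \emph{not} run two sequential solving steps. Instead, for each index $m=1,\dots,d$ it forms $\o_m$ (the components of $\o$ avoiding $m$), slices along the $m$-th tensor factor to obtain closed forms $(\o_m\mid_k)$ over the $\RR^{d-1}$-action on $\HH_1\otimes\dots\otimes\widehat{\HH_m}\otimes\dots\otimes\HH_d$, solves each slice via the inductive hypothesis (or Theorem~\ref{irreda} when $n=d-1$), reassembles an $(n-1)$-form $\eta_m$, and finally sets $\eta=\frac{1}{d-n+1}\sum_m\eta_m$. Each $\eta_m$ costs one application of the $(d-1)$-dimensional result, and the averaging is free, so the Sobolev loss is incurred \emph{once}. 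The slice-and-sum calculation also handles the passage between the $\SL(2,\RR)^{d-1}$-Sobolev norm on each slice and the full $\SL(2,\RR)^d$-Sobolev norm on $\HH$, a point your outline does not address.

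Your scheme---solve $\di'\xi=\o'$, then solve $\di'\zeta=\tilde\o''=\o''\mp X_d\xi$---applies the $(d-1)$-dimensional result twice \emph{in series}. The difficulty is that $\tilde\o''$ inherits only the regularity of $X_d\xi$, which is one order below $\xi$ and hence far below the $W^{s_{d-1}}$ needed to feed back into the inductive hypothesis at the same parameters. Concretely: to get $\zeta\in W^t$ you need $\tilde\o''\in W^{(s')_{d-1}}$ for some $s'>t+1$, but $(s')_{d-1}\geq s'$, forcing $\xi$ to have order exceeding $t+2$, which in turn forces the first application to start from an exponent strictly larger than $s_d$. The recursion you propose to ``check'' will not reproduce the paper's $s_d$; it closes up only with a worse exponent. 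So the two-step Koszul approach can be made to work, but not with the stated loss---the paper's averaging trick is precisely what sidesteps this compounding.
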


\begin{remark*}
The number $s_d$ is defined the same way in Theorem~\ref{b} as it is in Theorem~\ref{a}.
\end{remark*}

Theorems \ref{one} and \ref{two} follow from Theorems \ref{a} and \ref{b} by setting
\[
	\HH = \Lii_{0}(\SL(2,\RR)^{d}/\G)
\]
and noting that this representation satisfies the spectral gap assumption, by combining work of D. Kleinbock and G. Margulis in \cite{KM}, and L. Clozel in \cite{Clo03}.  (The relevant theorem is listed in Section \ref{sectionsix} as Theorem \ref{spectralgapth}.)


\section{Unitary representations of $\SL(2,\RR)$}

The purpose of this section is to recall some essential aspects of the unitary representation theory of the group $\SL(2,\RR)$.  

We note that this exposition follows \cite{FF} and \cite{M2} very closely, with the appropriate changes made to suit the fact that we are now working with $\SL(2,\RR)$, rather than $\PSL(2,\RR)$.  For detailed exposition of the unitary dual of $\SL(2,\RR)$, one can consult \cite{HT92, Lan75, Taylor}.

Fix the following generators of the Lie algebra $\Sl(2,\RR)$:
\[
	X = \begin{pmatrix} 1/2 & 0 \\ 0 & -1/2\end{pmatrix};\quad
	Y = \begin{pmatrix} 0 & -1/2 \\ -1/2 & 0\end{pmatrix};\quad
	\Theta = \begin{pmatrix} 0 & 1/2 \\ -1/2 & 0\end{pmatrix}.
\]
The Laplacian operator is defined by $\Delta = -X^2 - Y^2 - \Theta^2$ and the Casimir operator is defined by $\Box = -X^2 - Y^2 + \Theta^2$.  The Casimir operator $\Box$ is in the center of the universal enveloping algebra of $\Sl(2,\RR)$, and so it acts as a scalar in any irreducible unitary representation of $\SL(2,\RR)$.  In fact, this scalar parametrizes the unitary dual of $\PSL(2,\RR)$.  We will denote by $\HH_\mu$ the Hilbert space of an irreducible unitary representation of $\SL(2,\RR)$ where $\Box$ acts by multiplication by 
\[
\mu \in \RR^{+} \cup \{-n^{2}+n \mid n \in \ZZ^{+}\}\cup\{-n^{2}+\frac{1}{4} \mid n \in \ZZ_{\geq 0}\}.  
\]
It is also useful to define $\nu$ as a complex solution to 
\[
\nu^2 = 1 - 4\mu, 
\]
so that $\nu \in i\RR \cup \{2n - 1 \mid n \in \ZZ^{+} \} \cup \{2n \mid n \in \ZZ_{\geq 0}\} \cup (-1,1)\backslash\{0\}$.

The irreducible unitary representation space $\HH_\mu$ decomposes as
\[
	\HH_\mu = \overline{\bigoplus_{k \in \ZZ} m_k\,\V_{k}},
\]
where $\V_k \cong \CC$ is the irreducible representation of $\SO(2) \subset \SL(2,\RR)$ where the operator $-2i\Theta$ acts by multiplication by $k \in \ZZ$, and $m_k$ is the multiplicity with which this representation appears.  We identify five different possibilities: 
\begin{enumerate}
	\item \emph{First principal series.}  $\mu \geq \frac{1}{4}$, $\nu \in i\RR$, and 
		\[
			m_k = \begin{cases} 1\quad\mathrm{if}\quad k \in 2\ZZ \\ 0 \quad\mathrm{otherwise}. \end{cases}
		\]
	\item \emph{Second principal series.} $\mu > \frac{1}{4}$, $\nu \in i\RR\backslash\{0\}$, and 
		\[
			m_k = \begin{cases} 1\quad\mathrm{if}\quad k \in 2\ZZ+1 \\ 0 \quad\mathrm{otherwise}. \end{cases}
		\]
	\item \emph{Complementary principal series.} $0< \mu < \frac{1}{4}$, $\nu \in (-1,1)\backslash\{0\}$, and 
		\[
			m_k = \begin{cases} 1\quad\mathrm{if}\quad k \in 2\ZZ \\ 0 \quad\mathrm{otherwise}. \end{cases}
		\]
	\item \emph{First holomorphic discrete series.} $\mu  = -n^2 + n$ for some $n \in \ZZ^{+}$, $\nu = 2n-1$, and 
		\[
			m_k = \begin{cases} 1\quad\mathrm{if}\quad k \in [2n,\infty)\cap2\ZZ \\ 0 \quad\mathrm{otherwise}. \end{cases}
		\]
	\item \emph{Second holomorphic discrete series.} $\mu  = -n^2 + \frac{1}{4}$ for some $n \in \ZZ^{+}$, $\nu = 2n$, and 
		\[
			m_k = \begin{cases} 1\quad\mathrm{if}\quad k \in [2n+1,\infty)\cap2\ZZ+1 \\ 0 \quad\mathrm{otherwise}. \end{cases}
		\]
\end{enumerate}
\begin{remark*}
	The anti-holomorphic discrete series are not listed here because they are equivalent to their holomorphic counterparts.  It is also worth mentioning that ``second holomorphic discrete series'' does not seem to be standard terminology.  Most sources just list a holomorphic discrete series (and equivalent anti-holomorphic discrete series) without distinguishing between even and odd spectra.  However, it is convenient in this work to make the distinction.  We also remark that for each series, there are standard models that realize the action of $\SL(2,\RR)$, but we do not need them in our work.  
\end{remark*}


\subsubsection{Some useful indices}

Define
\[
	i_{\nu} = \left\lfloor \frac{1 + \mathfrak{R}(\nu)}{2} \right\rfloor
\]
so that 
\[
	i_{\nu} = \begin{cases} 0 \quad\textrm{for principal series and complementary series} \\ n \quad\textrm{for holomorphic discrete series.} \end{cases}
\]
Define $\e:= \e(\HH_\mu)$ by
\[
	\e = \begin{cases} 0 \quad\textrm{for first principal, first holomorphic discrete, and complementary series} \\ 1 \quad\textrm{for second principal and second holomorphic discrete series.} \end{cases}
\]
This notation is used for convenience.  Below, $i_{\nu}$ is used as the ``starting point'' for an indexing set $J_\nu$ for the spectrum of $(-2i\Theta)$, and $\e$ determines the parity of this spectrum, \emph{i.e.} it determines whether it consists of even numbers or odd numbers.  It is also what distinguishes the unitary dual of $\SL(2,\RR)$ from that of $\PSL(2,\RR)$.  For the latter, $\e = 0$ always.  Therefore, if one sets $\e = 0$ in what follows, then one recovers the setup from \cite{FF} and \cite{M2}.

We make extensive use of the following orthogonal basis for each irreducible $\HH_{\mu}$.


\subsection{An orthogonal basis for irreducible unitary representations} \label{orthobasis}

This section follows \cite{FF} in defining an orthogonal basis $\{u_{k}\}$ for $\HH_{\mu}$.

The elements 
\[
\eta_+ = X - i\,Y \qquad\mathrm{and}\qquad \eta_- = X + i\,Y
\]
of the universal enveloping algebra of $\Sl(2,\RR)$ raise and lower eigenvalues of the operator $-2i\,\Theta$ by $2$.  

Fixing a unit vector $v_{i_{\nu}} \in \V_{2 i_{\nu} + \e}$, we obtain an orthogonal basis for $\HH_\mu$ by 
\[
	\dots, \eta_{-}^{2}\,v_{0}, \eta_{-}\,v_{0}, v_{0},\eta_{+}\,v_{0}, \eta_{+}^{2}\,v_{0}, \dots
\]
for the principal and complementary series, and
\[
	v_{n},\eta_{+}\,v_{n}, \eta_{+}^{2}\,v_{n}, \dots
\]
for the discrete series.    

We re-scale these by defining
\begin{align*}
	u_k = c_k\,(\eta_{+}\,u_{k-1}), \quad&c_{k} = \frac{2}{2k + \e -1+\nu} \quad \mathrm{for}\quad k>i_{\nu} \\
	u_k = c_k\,(\eta_{-}\,u_{k+1}), \quad&c_{k} = \frac{2}{-2k - \e -1+\nu} \quad \mathrm{for}\quad k<i_{\nu}
\end{align*}
and $u_{i_{\nu}} = v_{i_{\nu}}$.  Note that for the discrete series, there is no $k < i_{\nu}$, so the second line does not apply. Defining
\[
	J_{\nu} := \begin{cases} \ZZ \quad\mathrm{if}\quad \mu \in \RR^{+} \\ [n,\infty) \cap \ZZ \quad\mathrm{if}\quad \mu \in \{-n^2 + n\}\cup\{-n^2 + \frac{1}{4}\}, \end{cases}
\]
we have a basis $\{u_{k}\}_{k \in J_{\nu}}$, where $u_k \in \V_{2k+\e}$.  It is also convenient to define
\begin{align}
	\Z_{\mu} &:= \begin{cases} \ZZ \quad\mathrm{if}\quad \mu \in \RR^{+} \\ \NN\cup\{0\} \quad\mathrm{if}\quad \mu \in \{-n^2 + n\}\cup\{-n^2 + \frac{1}{4}\}, \end{cases} \nonumber \\
		&= J_{\nu} - i_{\nu} \label{defZ}
\end{align}
so that the basis can also be written $\{u_{i_\nu + k}\}_{k \in \Z_{\mu}}$, where $u_{i_\nu + k} \in \V_{2i_{\nu} + 2k+\e}$.

One can compute the norms for the $\{u_k\}$ by the following calculations.  First, for $k>0$ (or $k>n$ for the discrete series),
\begin{align*}
	\left\| u_k \right\|^{2} &= \left\| c_k \right\|^2\,\langle \eta_{+}u_{k-1}, \eta_{+}u_{k-1} \rangle \\
		&= -\left\| c_k \right\|^2\,\langle \eta_{-}\eta_{+}u_{k-1}, u_{k-1} \rangle,
\end{align*}
since $\eta_{+}^{*} = -\eta_{-}$.  Then, observing that $\eta_{-}\eta_{+} = \Theta^2 + i\Theta - \Box$,
\begin{align*}
	&=  -\left\| c_k \right\|^2\,\langle (\Theta^2 + i\Theta - \Box)u_{k-1}, u_{k-1} \rangle \\
	&= \frac{2k + \e -1 -\nu}{2k + \e -1 + \bar\nu}\left\| u_{k-1} \right\|^{2}.
\end{align*}
For $k \leq 0$, one gets
\begin{align*}
	\left\| u_{k} \right\|^{2} &= \frac{|2k + \e| -1 - \nu}{|2k + \e| -1 + \bar\nu}\left\| u_{k+1} \right\|^{2},
\end{align*}
allowing us to conclude
\[
	\left\| u_{k} \right\|^{2} = \begin{cases} \left\| u_{k-1}\right\|^2 \quad\mathrm{if}\quad \nu \in i\RR \\
															\frac{|2k + \e| - 1 - \nu}{|2k + \e| - 1 + \nu}\left\| u_{k-1}\right\|^2 \quad\mathrm{if}\quad \nu \in \RR \end{cases}
\]
for $k > 0$ and 
\[
	\left\| u_{k} \right\|^{2} = \begin{cases} \left\| u_{k+1}\right\|^2 \quad\mathrm{if}\quad \nu \in i\RR \\
															\frac{|2k + \e| - 1 - \nu}{|2k + \e| - 1 + \nu}\left\| u_{k+1}\right\|^2 \quad\mathrm{if}\quad \nu \in \RR \end{cases}
\]
for $k <0$.  

Introducing
\begin{equation}\label{Pidef}
	\Pi_{\nu, \e,k} = \prod_{i = i_{\nu}+1}^{k}\frac{2i+\e-1-\nu}{2i+\e-1+\bar\nu} \quad\textrm{for any}\quad k \geq i_{\nu},
\end{equation}
where empty products are set to $1$, we have
\[
	\left\| u_k\right\|^2 = \left|\Pi_{\nu, \e, |k|}\right|.
\]

The following lemma is from \cite{FF}.

\begin{lemma} [Flaminio--Forni] \label{fflemma}
	If $\nu \in i\RR$ (principal series), for all $k \geq i_{\nu}=0$,
	\begin{align*}
		|\Pi_{\nu,\e,k}| = 1.
	\end{align*}
	There exists $C>0$ such that, if $\nu \in (-1,1) - \{0\}$ (complementary series), for all $k > i_{\nu} = 0$, we have
	\begin{align*}
		C^{-1}\left(\frac{1-\nu}{1+\nu}\right)(1+k)^{-\nu} \leq \Pi_{\nu,\e,k} \leq C\left(\frac{1-\nu}{1+\nu}\right)(1+k)^{-\nu};
	\end{align*}
	if $\nu = 2n+\e-1$, for all $k\geq l \geq i_{\nu}=n$ (discrete series), we have
	\begin{align*}
		C^{-1}\left(\frac{k-n -\e +1}{l-n-\e+1}\right)^{-\nu} \leq \frac{\Pi_{\nu,\e,k}}{\Pi_{\nu,\e,l}} \leq C\left(\frac{k-n+\nu}{l-n+\nu}\right)^{-\nu}.
	\end{align*}
\end{lemma}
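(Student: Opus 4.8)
The plan is to prove the three cases of the lemma separately, in each case working straight from the explicit product~\eqref{Pidef}
\[
	\Pi_{\nu,\e,k}=\prod_{i=i_{\nu}+1}^{k}\frac{2i+\e-1-\nu}{2i+\e-1+\bar\nu},
\]
and exploiting that $\bar\nu=-\nu$ when $\nu\in i\RR$ while $\bar\nu=\nu$ when $\nu$ is real.

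\emph{Principal series.} This case is immediate: if $\nu\in i\RR$ then $2i+\e-1+\bar\nu=2i+\e-1-\nu$, so every factor in~\eqref{Pidef} equals $1$, whence $\Pi_{\nu,\e,k}=1$ and in particular $|\Pi_{\nu,\e,k}|=1$ for all $k\ge i_{\nu}=0$. \emph{Complementary series.} Here $\e=0$, $i_{\nu}=0$, and $\nu\in(-1,1)\setminus\{0\}$ is real, so I would first split off the $i=1$ factor,
\[
	\Pi_{\nu,0,k}=\frac{1-\nu}{1+\nu}\prod_{i=2}^{k}\frac{2i-1-\nu}{2i-1+\nu},
\]
and then estimate the remaining product through its logarithm. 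For $i\ge2$ one has $2i-1\ge3>|\nu|$, so a Taylor expansion gives $\log\frac{2i-1-\nu}{2i-1+\nu}=-\frac{2\nu}{2i-1}+O(|\nu|^{3}(2i-1)^{-3})$ with an absolute implied constant; summing, and comparing $\sum_{i=2}^{k}(2i-1)^{-1}$ with $\frac12\log k$, yields $\log\prod_{i=2}^{k}\frac{2i-1-\nu}{2i-1+\nu}=-\nu\log k+O(1)$ uniformly in $\nu$. Since $|\nu|<1$ makes $k^{-\nu}$ and $(1+k)^{-\nu}$ comparable with absolute constants, this gives exactly the claimed two-sided bound, the prefactor $\frac{1-\nu}{1+\nu}$ being precisely the term that was split off.

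\emph{Discrete series.} For $k\ge l\ge i_{\nu}=n$, formula~\eqref{Pidef} gives $\frac{\Pi_{\nu,\e,k}}{\Pi_{\nu,\e,l}}=\prod_{i=l+1}^{k}\frac{2i+\e-1-\nu}{2i+\e-1+\nu}$, and substituting $\nu=2n+\e-1$ collapses the $i$-th factor to $\frac{i-n}{i+n+\e-1}$. Writing $j=i-n$, $a=l-n\ge0$, $b=k-n$, this becomes $\prod_{j=a+1}^{b}\frac{j}{j+\nu}$. For the lower bound I would use Bernoulli's inequality $(1+\frac1j)^{\nu}\ge1+\frac{\nu}{j}$ (legitimate since $\nu\ge1$), which gives $\frac{j}{j+\nu}\ge(\frac{j}{j+1})^{\nu}$ and hence, after telescoping, $\prod_{j=a+1}^{b}\frac{j}{j+\nu}\ge(\frac{a+1}{b+1})^{\nu}=(\frac{b+1}{a+1})^{-\nu}$, which dominates $(\frac{b-\e+1}{a-\e+1})^{-\nu}$ because $b\ge a$. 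For the upper bound I would establish the sharper elementary inequality $1+\frac{\nu}{j}\ge(1+\frac{1}{j+\nu-1})^{\nu}$ for integers $\nu\ge1$ --- equality for $\nu=1$, and for $\nu\ge2$ it follows by combining $\log(1+x)\le x$ with $\log(1+x)\ge\frac{2x}{2+x}$ --- which yields $\frac{j}{j+\nu}\le(\frac{j+\nu-1}{j+\nu})^{\nu}$ and hence, telescoping, $\prod_{j=a+1}^{b}\frac{j}{j+\nu}\le(\frac{a+\nu}{b+\nu})^{\nu}=(\frac{b+\nu}{a+\nu})^{-\nu}$. Both discrete-series estimates then hold, in fact with $C=1$.

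The parts needing care are the \emph{uniformity} of the constants and the asymmetry built into the statement. In the complementary case the constant $C$ must not depend on $\nu\in(-1,1)$, which is exactly why the explicit factor $\frac{1-\nu}{1+\nu}$ --- unbounded as $\nu\to\pm1$ --- has to be separated off before the logarithmic estimate. In the discrete case the different quantities $k-n-\e+1$ and $k-n+\nu$ sitting inside the two powers are what allow the two one-line inequalities above to be used with a \emph{universal} constant rather than a single power law; pinning down the sharp elementary inequality for the upper bound is the most fiddly step. Finally, setting $\e=0$ throughout recovers the $\PSL(2,\RR)$ form of the statement in \cite{FF}.
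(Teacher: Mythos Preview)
Your proof is correct. Note that the paper does not actually prove this lemma: it cites \cite{FF} (their Lemma~2.1, proved in their Appendix) for the $\e=0$ case and remarks that the extension to $\e=1$ is immediate for the second principal series and follows by ``carrying out the proof in \cite{FF} and making the appropriate adjustments'' for the second discrete series. Your argument is therefore a self-contained replacement for that citation. The principal and complementary series cases follow exactly the route one would expect from the Flaminio--Forni argument (logarithm plus comparison with a harmonic sum, the $\frac{1-\nu}{1+\nu}$ factor split off to keep the constant uniform). Your discrete-series treatment --- telescoping via the two Bernoulli-type inequalities $(1+1/j)^{\nu}\ge 1+\nu/j$ and $1+\nu/j\ge(1+1/(j+\nu-1))^{\nu}$ --- is clean and in fact yields the stated bounds with $C=1$; the only small point to note is that when $\e=1$ and $l=n$ the lower bound in the lemma statement degenerates to $0$, so the comparison you make there is vacuously true in that boundary case.
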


\begin{remark*}
	Since Flaminio and Forni only work with representations of $\PSL(2,\RR)$, they prove the above lemma for $\e = 0$.  It is immediate that for the second principal series, $|\Pi_{\nu,\e,k}| = 1$, so it is only left to check that Lemma \ref{fflemma} holds in the second holomorphic discrete series, where $\nu = 2n$ and $\e=1$.  This is easily seen, by carrying out the proof in \cite{FF} and making the appropriate adjustments.  The result is listed in \cite{FF} as Lemma~$2.1$ and is proved in their Appendix.
\end{remark*}


\subsubsection{Sobolev norms}

Sobolev norms of the basis elements $u_k$ are computed by
\begin{align*}
	\left\| u_k \right\|_{\t}^{2} &= \langle (1 + \Delta)^{\t}u_k, u_k \rangle \\
		&= \langle (1 + \Box  - 2\Theta^2)^{\t}u_k, u_k \rangle \\
		&= (1 + \mu + 2(k + \e/2)^2)^{\t} \left\| u_k \right\|^2,
\end{align*}
and so a vector $\sum f(k)\,u_k \in \HH_\mu$ is in the Sobolev space $W^{\t}(\HH_\mu)$ if and only if the sum
\[
	\left\| f \right\|_{\t} = \left( \sum_{-\infty}^{\infty}(1 + \mu + 2(k + \e/2)^2)^{\t} \left|\Pi_{\nu,\e,|k|}\right| |f(k)|^2 \right)^{\frac{1}{2}}
\]
converges.


\subsubsection{Action of $X$ on the basis} \label{actionsection}

One can compute the action of $X$ on an element $u_k$ of our basis.  We have the following lemma, which is also an $\SL(2,\RR)$-version of one found in \cite{FF}:

\begin{lemma}[Flaminio--Forni] \label{action}
Let $J_{\nu}=\ZZ$ if $\mu$ parametrizes the principal or complementary series, and $J_{\nu} = [n,\infty) \cap \ZZ$ if $\mu \in \{-n^2+n\mid n \in \ZZ^{+}\}\cup\{-n^2+\frac{1}{4}\mid n \in \ZZ_{\geq 0}\}$ parametrizes the holomorphic discrete series.  Then
\begin{align*}
	X\,u_{k} &= \frac{2k + \e + 1 + \nu}{4}\cdot u_{k+1} - \frac{2k +\e  - 1 - \nu}{4}\cdot u_{k-1} \quad \textrm{for all } k \in J_{\nu}
\end{align*}
(for $\HH_\mu$ in the discrete series, the above formula must read $X\,u_{n} = (n+\frac{\e}{2})\cdot u_{n+1}$).
\end{lemma}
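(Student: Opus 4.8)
The plan is to use $X = \tfrac12(\eta_+ + \eta_-)$, which reduces the claim to computing $\eta_+ u_k$ and $\eta_- u_k$ separately in the basis $\{u_j\}_{j\in J_\nu}$. Since $\eta_\pm$ raise and lower the $(-2i\Theta)$-eigenvalue by $2$, and $u_k \in \V_{2k+\e}$ with each $\V_j$ occurring with multiplicity at most one, each of $\eta_+ u_k$, $\eta_- u_k$ is automatically a scalar multiple of $u_{k+1}$, respectively $u_{k-1}$ (read as $0$ when that weight space does not occur in $\HH_\mu$). So the whole content is to identify the two scalars.

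One scalar is read off directly from the rescaling that defines $\{u_k\}$: for $k \ge i_\nu$ the relation $u_{k+1} = c_{k+1}\,\eta_+ u_k$ gives $\eta_+ u_k = c_{k+1}^{-1}\,u_{k+1} = \tfrac12(2k + \e + 1 + \nu)\,u_{k+1}$, and for $k < i_\nu$ the relation $u_k = c_k\,\eta_- u_{k+1}$ can be rearranged similarly. To get the complementary scalar I would use the commutation identities $\eta_-\eta_+ = \Theta^2 + i\Theta - \Box$ and $\eta_+\eta_- = \Theta^2 - i\Theta - \Box$ (both following from $[X,Y] = -\Theta$ and the definitions of $\Box$ and $\Delta$). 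On $u_k$ one has $\Theta\,u_k = \tfrac{i}{2}(2k+\e)\,u_k$ and $\Box\,u_k = \mu\,u_k$ with $\mu = \tfrac14(1-\nu^2)$, so $\eta_-\eta_+$ acts on $u_k$ by $-\tfrac14\big((2k+\e+1)^2 - \nu^2\big) = -\tfrac14(2k+\e+1-\nu)(2k+\e+1+\nu)$ and $\eta_+\eta_-$ by $-\tfrac14(2k+\e-1-\nu)(2k+\e-1+\nu)$. Feeding these into $\eta_- u_k = c_k\,\eta_-\eta_+ u_{k-1}$ (valid for $k > i_\nu$, via $u_k = c_k\,\eta_+ u_{k-1}$) yields $\eta_- u_k = -\tfrac12(2k+\e-1-\nu)\,u_{k-1}$, and the analogous computation with $\eta_+\eta_-$ covers $\eta_+ u_k$ for $k < i_\nu$. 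Adding the two scalars and halving gives exactly $X\,u_k = \tfrac14(2k+\e+1+\nu)\,u_{k+1} - \tfrac14(2k+\e-1-\nu)\,u_{k-1}$.

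It then remains to dispatch the boundary cases. When $\HH_\mu$ is a holomorphic discrete series and $k = i_\nu = n$ there is no $u_{n-1}$; but there $\nu = 2n+\e-1$, which forces the coefficient $2k+\e-1-\nu$ to vanish, so the general formula specializes (using $\e\in\{0,1\}$) to the stated $X\,u_n = (n+\tfrac{\e}{2})\,u_{n+1}$. For the principal and complementary series, where $i_\nu = 0$, one only has to verify that the two ranges $k \ge 0$ and $k < 0$ produce the same single expression and agree at $k = 0$, which they do. I would also note in passing that the $u_k$ are $K$-finite, hence analytic vectors, so $X u_k$ is well defined and lies in the Lie-algebra span as used above.

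The argument is essentially a bookkeeping exercise; the one place that wants care is keeping the signs, the parity index $\e$, and the distinction between $\nu$ and $\bar\nu$ straight — here only $\nu$ enters (as in the $c_k$'s), whereas $\bar\nu$ appeared in the norm formulas — and confirming that each regime ($k$ above, at, or below $i_\nu$; discrete versus continuous series) collapses to the single stated formula. Since setting $\e = 0$ recovers the $\PSL(2,\RR)$ lemma of \cite{FF}, there is no new difficulty beyond carrying the half-integer shift $\e/2$ through the same manipulations.
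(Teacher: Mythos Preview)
Your proposal is correct and follows essentially the same approach as the paper: both write $X = \tfrac12(\eta_+ + \eta_-)$, read off one scalar from the recursive definition of $u_k$, and handle the discrete-series endpoint by observing that $\nu = 2n+\e-1$ kills the $u_{k-1}$ coefficient. The paper's proof is terser (it simply asserts the result of the calculation), whereas you spell out the step $\eta_- u_k = c_k\,\eta_-\eta_+ u_{k-1}$ via $\eta_-\eta_+ = \Theta^2 + i\Theta - \Box$; this is exactly the computation the paper elides, so the two arguments coincide.
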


\begin{proof}
One calculates
\begin{align*}
	X\,u_k &= \frac{1}{2}(\eta_{+}\,u_k + \eta_{-}\,u_k) \\
		&= \frac{2k + \e +1+\nu}{4}\,u_{k+1} -\frac{2k+\e - 1 - \nu}{4}\,u_{k-1},
\end{align*}
for $k > i_\nu$, and a similar calculation for $k < i_{\nu}$ (for the principal and complementary series).  In the discrete series, with $i_{\nu} = n$, notice that $\nu = 2n +\e - 1$, so
\[
	\frac{2n+\e - 1 - \nu}{4} = 0
\]
and
\[
	\frac{2n+\e + 1 + \nu}{4} = n + \frac{\e}{2}.
\]
\end{proof}

Defining 
\[
	b^{+}(k) = \frac{2k + \e +1+\nu}{4}
\]
and 
\[
	b^{-}(k) = \frac{2k + \e -1-\nu}{4},
\]
we have simply
\begin{align*}
	X\,u_k &= b^{+}(k)\,u_{k+1} -b^{-}(k)\,u_{k-1}.
\end{align*}
Using this notation, the coboundary equation for $X$ in an irreducible unitary representation of $\SL(2,\RR)$ becomes a difference equation involving the $b^{\pm}(k)$'s and $u_k$'s.  It is exactly the difference equation found in the proof of Theorem \ref{m} for the $\PSL(2,\RR)$ case, so that the solution there works equally well here \cite{M2}.  We elaborate more on this in Appendix \ref{appendix}.


\subsection{Representations of products} \label{repprods}

Since we are concerned in this work with products $\SL(2,\RR) \times\dots\times\SL(2,\RR):= \SL(2,\RR)^d$, we take this opportunity to set some notation for the irreducible representations.  

Irreducible unitary representations of $\SL(2,\RR)^d$ are on tensor products $\HH_{\mu_1}\otimes\dots\otimes\HH_{\mu_d}$ of Hilbert spaces of irreducible unitary representations of $\SL(2,\RR)$.  Therefore, we can use the multi-index $(\mu_1, \dots, \mu_d)$ to parametrize these.  It is sometimes convenient to use the typographical convention of taking \textbf{bold-faced} symbols to denote multi-indices corresponding to these tensor products.  For example, $\bmu:=(\mu_1,\dots,\mu_d)$.

A basis for $\HH_{\bmu}$ is given by tensor products 
\[
	\{u_{k_1}^{(1)}\otimes\dots\otimes u_{k_d}^{(d)}\}_{k_i \in J_{\nu_i}}
\]
of basis elements from the different factors.  Lemma \ref{action} gives us the following formula for the action of $X_{i}$:
\begin{align*}
	X_{i}(u_{k_1}^{(1)} \otimes\dots\otimes u_{k_d}^{(d)}) &= b_{i}^{+}(k_i)\cdot u_{k_1}^{(1)}\otimes\dots\otimes u_{k_i+1}^{(i)}\otimes\dots\otimes u_{k_d}^{(d)} \\
		&\indent - b_{i}^{-}(k_i)\cdot u_{k_1}^{(1)}\otimes\dots\otimes u_{k_i-1}^{(i)}\otimes\dots\otimes u_{k_d}^{(d)}.
\end{align*}
where
\begin{align*}
	b_{i}^{\pm}(k_{i}) &= \frac{2k_{i} + \e_i \pm (1 + \nu_{i})}{4}
\end{align*}
and $k_{i} \in J_{\nu_i}$.  We also have
\begin{align*}
	\left\| u_{k_1} \otimes\dots\otimes u_{k_d} \right\|^{2} &= \left\| u_{k_1} \right\|^{2} \dots \left\| u_{k_d} \right\|^{2} \\
		&= |\Pi_{\nu_1,\e_1,|k_1|}|\cdots|\Pi_{\nu_d, \e_d, |k_d|}| \\
		&:= |\Pi_{\bnu,\bep, {\bk}}|,
\end{align*}
where we have dropped the parenthetical superscripts, trusting that there is no confusion.

We work with elements $f \in \HH_{\bmu}$ that are of the form 
\begin{align*}
	f &= \sum_{(k_1,\dots,k_d) \in J_{\nu_1}\times\dots\times J_{\nu_d}} f(k_1,\dots,k_d)\,u_{k_1}\otimes\dots\otimes u_{k_d}.
\end{align*}
Sobolev norms are then given by
\begin{align*}
	\left\|f\right\|_{\t}^{2} &= \sum_{{\bk} \in J_{\bnu}} (1+ \left\|\bmu\right\| +2\,|{\bk} + \frac{\bep}{2}|^{2})^{\t} \, |f({\bk})|^2\,\left\|u_{\bk}\right\|^2,
\end{align*}
where we have set the multi-indices
\begin{align*}
	\bmu &:=(\mu_1,\dots,\mu_d), \\
	\left\|\bmu\right\| &:= \mu_1 + \dots + \mu_d, \\
	\bep&:= (\e_1,\dots,\e_d) \\
	J_{\bnu} &:= J_{\nu_1}\times\dots\times J_{\nu_d} \\
	{\bk}&:= (k_1,\dots,k_d) \\
	|{\bk}|^2 &:= k_{1}^{2}+\dots+k_{d}^{2}, \quad\mathrm{and} \\
	u_{\bk} &:= u_{k_1} \otimes \dots \otimes u_{k_d}.
\end{align*}
We will make use of projected versions of these elements: 
\begin{align*}
	(f\mid_{k_{j},\dots,k_{d}}) &= \sum_{i=1}^{j-1}\sum_{k_{i} \in J_{\nu_i}} f(k_1,\dots,k_d)\, \left\| u_{k_j} \right\|\cdots\left\| u_{k_d} \right\| \,u_{k_1}\otimes\dots\otimes u_{k_{j-1}} \\
		 &\in \HH_{\mu_1}\otimes\dots\otimes\HH_{\mu_{j-1}},
\end{align*}
and we compute the restricted Sobolev norm as
\begin{align*}
	&\left\|(f\mid_{k_{j},\dots,k_d})\right\|_{\t}^{2} \\
	&=\sum_{i=1}^{j-1}\sum_{k_{i} \in J_{\nu_i}} (1+\mu_1+\dots+\mu_{j-1}+2((k_{1} + \frac{\e_1}{2})^{2}+\dots+(k_{j-1}+ \frac{\e_{j-1}}{2})^{2}))^{\t}\\
		&\indent\times|f(k_1,\dots,k_{d})|^2\,\left\|u_{k_1}\right\|^{2}\cdots\left\|u_{k_{d}}\right\|^{2}.
\end{align*}
It is clear that
\begin{align*}
	\left\|(f\mid_{k_j,\dots,k_d})\right\|_{\t}^{2} &\leq \left\| f \right\|_{\t}^{2}.
\end{align*}
We will also need the observation that, for any $\t, \s \in \NN$, 
		\begin{align*}
			&\sum_{k_1 \in J_{\nu_1}}(1+\mu_1 + 2(k_{1}+ \frac{\e_1}{2})^{2})^{\t} \, \left\| (f\mid_{k_1}) \right\|_{\s}^{2} \\
			&= \sum_{k_1 \in J_{\nu_1}}(1+\mu_1 + 2(k_{1}+ \frac{\e_1}{2})^2)^{\t} \\
			&\indent\times\sum_{i=2}^{d} \sum_{k_i \in J_{\nu_i}} (1+\mu_2 +\dots+\mu_{d} + 2((k_{2}+ \frac{\e_2}{2})^{2}+\dots+(k_{d}+ \frac{\e_d}{2})^{2}))^{\s} \\
			&\indent\times |f(k_1,\dots,k_d)|^2 \, \left\| u_{k_1} \right\|^{2} \cdots \left\| u_{k_d} \right\|^{2} \\
			&\leq \sum_{i=1}^{d}\sum_{k_i \in J_{\nu_i}} (1+\left\|\bmu\right\| + 2\,|{\bk} + \frac{\bep}{2}|^2)^{\t+\s} \, |f(k,l)|^{2} \, \left\|u_{k_1}\otimes\dots\otimes u_{k_d} \right\|^2 \\
			&= \left\| f \right\|_{\t+\s}^{2}.
		\end{align*}
This uses the fact that for positive numbers $A$ and $B$, 
\[
	(1+A)^{m}(1+B)^{n} \leq (1+A+B)^{m+n}
\]
holds for all $m, n \in \NN$.

We end this section by introducing another notational convenience.  For $\HH_1 \otimes\dots\otimes\HH_d$, let 
\begin{align*}
	\bZ^d &:= \bZ_{\bmu}^{d} = \Z_{\mu_1} \times \dots \times \Z_{\mu_d} \\
		&= J_{\nu_1}\times\dots\times J_{\nu_d} - (i_{\nu_1}, \dots, i_{\nu_d}),
\end{align*}
where $\Z_{\mu_i}$ is defined by \eqref{defZ} in Section \ref{orthobasis}.  Thus, $\bZ^d$ can be thought of as a truncated $\ZZ^d$---integer $d$-tuples with non-negative entries in the components corresponding to discrete series representations.  This notation will be used in Section \ref{dfold}.


\subsection{Invariant distributions} \label{invariantdists}

We are interested in the $X$-invariant distributions for unitary representations of $\SL(2,\RR)$.  For a unitary representation on $\HH$, we define the following sets:
\[
\II_{X}(\HH) = \{\mathcal{D} \in \mathcal{E}^{\prime}(\HH) \mid \LLL_{X}\D = 0 \}
\]
and
\[
\II_{X}^{s}(\HH) = \{\mathcal{D} \in W^{-s}(\HH) \mid \LLL_{X}\D = 0 \}.
\]
These are exactly the $X$-invariant distributions, satisfying $\D(X\,h)=0$.

By looking at how $X$ acts in the Hilbert space $\HH_\mu$ of any irreducible unitary representation of $\SL(2,\RR)$, we see that for all $k \in J_{\nu}$, an $X$-invariant distribution $\D$ must satisfy
\begin{align} \label{relation1}
	\frac{\D(u_{k+1})}{\D(u_{k-1})} &= \frac{b^{-}(k)}{b^{+}(k)} := \beta(k).
\end{align}
(This formula also works for discrete series, where $\nu = 2n + \e -1$, because $b^{-}(n) = 0$.)

In any irreducible representation, there are at most two linearly independent distributions satisfying these conditions, obtained by alternately setting exactly one of $\D(u_{i_{\nu}})$ and $\D(u_{i_{\nu}+1})$ to $1$ and the other one to $0$.  (Recall that $i_{\nu}$ is used to denote the ``starting'' index in $J_{\nu}$, the set indexing the basis $\{u_k\}$ of $\HH_{\mu}$, and so for the principal and complementary series, $i_{\nu}=0$; for the discrete series, where $\nu = 2n + \e - 1$, we have $i_{\nu}=n$.)  We will call the resulting distributions $\D_{0}^{\HH_{\mu}}$ and $\D_{1}^{\HH_{\mu}}$, respectively.

\begin{remark*}
	Note that if $\HH_\mu$ is from the discrete series, then the distribution $\D_{1}^{\HH_{\mu}}$ is not $X$-invariant.  
\end{remark*}

Formulas for $\D_{0}^{\HH_\mu}(u_{k})$ and $\D_{1}^{\HH_\mu}(u_{k})$ are 
\begin{align}
	\D_{0}^{\HH_\mu}(u_{i_\nu + 2k}) &= \prod_{j = 1}^{|k|} \b(i_\nu + 2j - 1), \label{D0formula} \\
	\D_{1}^{\HH_\mu}(u_{i_\nu + 2k + 1}) &= \prod_{j = 1}^{|k|} \b(i_\nu + 2j). \label{D1formula}
\end{align}

Calculations from \cite{M2} show that the Sobolev order of these distributions is at most $\frac{1-\mathfrak{R}(\nu)}{2}$ for the principal and complementary series, and at most $0$ for the discrete series. Similar calculations also hold for representations of $\SL(2,\RR)^d$, and are carried out in Section \ref{invdiststensor},  in the context of tensor products of representations.

For now, we state the following lemma.  
\begin{lemma} \label{degreeoneblue}
For any $\nu_0 \in (0,1)$ there is a constant $C_{\nu_0}>0$ such that the inequalities
\[
	C_{\nu_0}^{-1}\, \frac{\mu^p}{\left(1 + \mu + 2\abs{k+\frac{\e}{2}}^2\right)^{1/2}} \leq \frac{\abs{\D_{\s}^{\HH_{\mu}}(u_{k})}^2}{\norm{u_{k}}^2} \leq C_{\nu_0}\, \frac{\mu^p}{\left(1 + \mu + 2\abs{k+\frac{\e}{2}}^2\right)^{1/2}}
\]
hold whenever $\D_{\s}^{\HH_{\mu}}(u_k)\neq 0$, with:
\begin{itemize} 
\item $p=1$ and $\s =0$ or $1$ in all principal series representations; 
\item $p=1/2$ and $\s=0$ or $1$ in all complementary series representations with $\abs{\nu} \leq \nu_0$; and,
\item $p=1/4$ and for $\s=0$ in all discrete series representations.
\end{itemize}
\end{lemma}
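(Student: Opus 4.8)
The plan is to derive the estimate from the explicit product formulas \eqref{D0formula}, \eqref{D1formula} for $\D_\s^{\HH_\mu}(u_k)$ together with the norm formula $\norm{u_k}^2 = \abs{\Pi_{\nu,\e,\abs{k}}}$ and the asymptotics in Lemma~\ref{fflemma}. First I would write
\[
	\frac{\abs{\D_\s^{\HH_\mu}(u_k)}^2}{\norm{u_k}^2}
\]
as an explicit product of $\b(j)$-factors divided by the $\Pi$-product, and observe that the two products telescope against one another: each $\abs{\b(j)}^2 = \abs{b^-(j)/b^+(j)}^2$ carries the same type of factor that appears (over a shifted index) in $\Pi_{\nu,\e,\abs{k}}$. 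Carrying out this cancellation I expect the ratio to collapse, up to bounded multiplicative error, to something of the shape $\abs{b^{\pm}(k_0)}^{-1}\,\abs{b^{\pm}(i_\nu)}^{\text{something}}$ evaluated at the endpoints — that is, a ratio of a term depending on $\mu$ alone (from the starting index $i_\nu$, where $2i_\nu+\e-1 \pm \nu$ is controlled purely by $\nu$, hence by $\mu$) to a term depending on $k$ alone (from the top index, which behaves like $k$ for large $\abs{k}$). This endpoint behavior is exactly what the claimed two-sided bound records, with the factor $(1+\mu+2\abs{k+\e/2}^2)^{1/2}$ playing the role of the large-$\abs{k}$ normalization and the Weyl-denominator contribution, and $\mu^p$ the contribution from the bottom of the spectrum.

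Concretely, I would split into the three cases as in the statement. In the \textbf{principal series} ($\nu \in i\RR$, $i_\nu=0$), Lemma~\ref{fflemma} gives $\abs{\Pi_{\nu,\e,k}} = 1$, so $\norm{u_k}^2 = 1$ and the problem reduces to estimating $\abs{\D_\s^{\HH_\mu}(u_k)}^2 = \prod \abs{\b(2j-1+\s)}^2$ directly; since $\abs{\b(j)}^2 = \big((2j+\e-1)^2 + \abs{\nu}^2\big)\big/\big((2j+\e+1)^2+\abs{\nu}^2\big)$, the product telescopes to $\big((\e\mp1)^2+\abs{\nu}^2\big)\big/\big((2\abs{k}+\e\pm1)^2+\abs{\nu}^2\big)$ up to finitely many bounded end factors, and since $\abs{\nu}^2 = 4\mu-1$ for large $\mu$ this is comparable to $\mu/(\mu + \abs{k}^2) \asymp \mu/(1+\mu+2\abs{k+\e/2}^2)$, giving $p=1$. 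In the \textbf{complementary series} ($\nu \in (-1,1)\setminus\{0\}$, $\e=0$, $i_\nu=0$), Lemma~\ref{fflemma} gives $\Pi_{\nu,0,k} \asymp \frac{1-\nu}{1+\nu}(1+k)^{-\nu}$; meanwhile the telescoped $\D$-product is comparable to $\prod_{j}\frac{2j-1-\nu}{2j-1+\nu}$-type quantities, which by the same lemma-style estimate is $\asymp (1+\abs{k})^{-2\nu}\cdot\frac{(1-\nu)^?}{\dots}$ — dividing, the $(1+k)^{-\nu}$'s partially cancel and, using that $\nu$ stays bounded away from $1$ (here is where $\abs{\nu}\le\nu_0<1$ enters, to keep $1-\nu$ from degenerating), one is left with something $\asymp \mu^{1/2}/(1+\mu+2\abs{k}^2)^{1/2}$, i.e. $p=1/2$, because $\mu = (1-\nu^2)/4 \asymp 1-\abs{\nu}$ on this range. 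In the \textbf{discrete series} ($\nu = 2n+\e-1$, $i_\nu = n$), only $\s=0$ gives an invariant distribution; here $\norm{u_k}^2 = \abs{\Pi_{\nu,\e,\abs{k}}}$ with the discrete-series bound from Lemma~\ref{fflemma}, and $\abs{\b(j)}^2 = \abs{2(j-n)/(2(j-n)+4n+2\e)}^2$ so that $\abs{\D_0^{\HH_\mu}(u_{n+2k})}^2$ telescopes to a ratio involving $(n+\e/2)$ in the numerator and $(\abs{k}+n)$-type factors in the denominator; dividing by $\Pi$ and using $\mu = -n^2 + n$ or $-n^2+\tfrac14$ (so $\mu^{1/4} \asymp n^{1/2} \asymp (n+\e/2)^{1/2}$) produces $p=1/4$ with the factor $(1+\mu+2\abs{k+\e/2}^2)^{1/2} \asymp (n^2 + \abs{k}^2)^{1/2}$ absorbing the remaining $k$-dependence.

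The main obstacle I anticipate is bookkeeping in the cancellation between the $\b$-product and the $\Pi$-product: the indexing in \eqref{D0formula}–\eqref{D1formula} runs over odd/even subsequences of $J_\nu$ with a shift by $i_\nu$ and parity offset $\e$, while \eqref{Pidef} runs over all of $[i_\nu+1,k]$, so one must be careful to match up $\b(i_\nu+2j-1)$ (resp. $\b(i_\nu+2j)$) with the correct pair of factors in $\Pi$ and to track which finitely many boundary factors are left uncancelled — these must be shown bounded above and below uniformly, which is where the hypothesis $\mu > \mu_0$ (spectral gap, controlling the bottom factors from below) and $\abs{\nu}\le\nu_0$ in the complementary case (controlling $1\pm\nu$) get used to produce the uniform constant $C_{\nu_0}$. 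The estimates themselves are all of the same elementary type already appearing in Lemma~\ref{fflemma} and in \cite{M2, FF}; the novelty is only in keeping track of the $\e$-dependence, which affects nothing essential since $\e \in \{0,1\}$ contributes only bounded corrections.
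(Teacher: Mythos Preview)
Your proposal is correct in outline and lands on the same endpoint-ratio picture as the paper, but the execution differs in two places worth noting. For the principal series you telescope $\abs{\b(j)}^2$ directly against $\abs{\Pi}=1$, which is cleaner than what the paper actually does: there the authors relate $\D_\s$ to the homogeneous solution $g^{1-\s}$ via the identity $\abs{\D_0(u_{\pm 2k})}^2/\norm{u_{\pm 2k}}^2 = \abs{g^1(2k+1)}^{-2}\abs{(1-\nu)/(4k+1-\nu)}^2$ and then invoke Mieczkowski's asymptotics (Lemma~\ref{mlemmaprincipal}) for $g^1$. Your route avoids that detour entirely. For the complementary and discrete series, however, the paper does \emph{not} estimate $\abs{\D}^2$ and $\norm{u}^2$ separately via Lemma~\ref{fflemma} and then divide, as you sketch; instead it computes the ratio $\norm{u_k}^2/\abs{\D_\s(u_k)}^2$ in closed form as (endpoint factor)$\times\prod_j\bigl(1+O(j^{-2})\bigr)$---for instance $\frac{4k+1+\nu}{1+\nu}\prod_{j=1}^{k}\bigl(1+\frac{4+4\nu}{(4j+1+\nu)(4j-3-\nu)}\bigr)$ in the complementary case---and then bounds the infinite product by taking logarithms, using $\frac{x}{1+x}\le\log(1+x)\le x$, and comparing to an integral. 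This is tighter than dividing two separate fflemma-type bounds and makes the uniformity in $\nu_0$ transparent: the product is bounded above and below by constants depending only on $\nu_0$ (complementary) or is $\asymp(\nu+1)^{1/2}$ (discrete), with no partial cancellation of $(1+k)^{-\nu}$ factors to track. Your approach would work but requires controlling the implicit constants in Lemma~\ref{fflemma} more carefully than that lemma states; the paper's ratio computation sidesteps this. One small slip: in the complementary series $\mu=(1-\nu^2)/4$ is bounded between $(1-\nu_0^2)/4$ and $1/4$, so $\mu^{1/2}\asymp 1$ with constant depending on $\nu_0$---it is not $\asymp 1-\abs{\nu}$ as you wrote, though this does not affect the conclusion.
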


\begin{remark*}
The proof is computational.  We have left it for Section~\ref{lemmaproof} in the Appendix.  This lemma is used in the proof of Lemma~\ref{sobolevnorms}.  Its implementation is such that the numerators $\mu^p$ cancel, so that really the most important aspects of Lemma~\ref{degreeoneblue} are in the denominators, and in the constant $C_{\nu_0}>0$, which is uniform over representations where $\nu$ comes no closer to $1$ than $\nu_0$.  This condition on the parameter $\nu$ is an incarnation of the spectral gap condition found in the main theorems.
\end{remark*}


\subsubsection{Invariant distributions in products and their Sobolev order} \label{invdiststensor}

For representations of products $\SL(2,\RR)\times\dots\times\SL(2,\RR)$, we are interested in distributions that are invariant under the action by the subgroup of diagonal matrices.  We define the sets
\[
\II_{X_1, \dots, X_d}(\HH) = \{\mathcal{D} \in \mathcal{E}^{\prime}(\HH) \mid \LLL_{X_i}\D = 0 \quad \textrm{for all} \quad i=1,\dots,d \}
\]
and
\[
\II_{X_1, \dots, X_d}^{\t}(\HH) = \{\mathcal{D} \in W^{-\t}(\HH) \mid \LLL_{X_i}\D = 0 \quad \textrm{for all} \quad i=1,\dots,d \}.
\]

For an irreducible representation on $\HH_1 \otimes\dots\otimes\HH_d$, the $X_1,\dots,X_d$-invariant distributions are easy to describe.  They need only satisfy \eqref{relation1} in each component, and so can be taken to be products of $\D_0$ and $\D_1$ in the following sense.  Let $\bmu = (\mu_1,\dots,\mu_d)$.  We define
\begin{align*}
S(\bmu) &:= \left\{ {\bn}=(\s_1,\dots, \s_{d}) \mid \s_{i} = \begin{cases} 0 \textrm{ or } 1 \textrm{ if } \mu_i \textrm{ is not in the discrete series} \\ 0 \textrm{ if it is.} \end{cases}\right\}
\end{align*}
to be the set of $1$-$0$-vectors indexing the $X_1,\dots,X_d$-invariant distributions
\begin{align*}
	\D_{\bn}^{\HH_{\bmu}}(u_{\bk}) &:= \D_{\s_1}^{\HH_1}(u_{k_1})\cdots\D_{\s_{d}}^{\HH_{d}}(u_{k_d}),
\end{align*}
where ${\bk}=(k_1,\dots,k_d) \in J_{\nu_1}\times\dots\times J_{\nu_d}$.

The Sobolev order of $\D_{\bn}^{\HH_{\bmu}}$ is the smallest $\t \in \RR$ for which 
\[
\D_{\bn}^{\HH_{\bmu}}(f) = \sum_{{\bk} \in J_{\bnu}} f({\bk})\,\D_{\bn}^{\HH_{\bmu}}(u_{\bk})
\]
converges for every $f \in W^{\t}(\HH_{\bmu})$.  We have
\begin{align*}
	|\D_{\bn}^{\HH_{\bmu}}(f)|^2 &= \left|\sum_{{\bk} \in I_{\bmu}} f({\bk})\,\D_{\bn}^{\HH_{\bmu}}(u_{\bk})\right|^2 \\
		&\leq \sum_{{\bk}\in I_{\bmu}}(1+\left\|\bmu\right\|+2|{\bk} + \frac{\bep}{2}|^2)^{\t}\,|f(u_{\bk})|^2\,\left\| u_{\bk} \right\|^2\\
		&\indent\times \sum_{{\bk}\in I_{\bmu}}(1+\left\|\bmu\right\|+2|{\bk}+ \frac{\bep}{2}|^2)^{-\t}\,|\D_{\bn}^{\HH_{\bmu}}(u_{\bk})|^2\,\left\| u_{\bk} \right\|^{-2}
	\intertext{and by Lemma~\ref{degreeoneblue},}
		&\leq \left\| f\right\|_{\t}^{2}\cdot\sum_{{\bk}\in I_{\bmu}}(1+\left\|\bmu\right\|+2|{\bk}+ \frac{\bep}{2}|^2)^{-\t-\frac{d}{2}},
\end{align*}
which converges whenever $\t > 0$.  This shows that the Sobolev order of the distribution $\D_{\bn}^{\HH_{\bmu}}$ is at most $0$.  Therefore, the distributions $\D_{\bn}^{\HH_{\bmu}}$, where $\bn \in S(\bmu)$, form a basis for the set $\II_{X_1,\dots,X_d}^{\t}$ whenever $\t > 0$.

\subsection{Direct decompositions and spectral gaps}\label{gap}

The discussion in this section justifies our interest in irreducible representations.  This is standard, and can be found in \cite{Mau50a, Mau50b}.

Any unitary representation $\pi$ of $\SL(2,\RR)^d$ on a separable Hilbert space $\HH$ has a direct integral decomposition over a positive Stieltjes measure on $\RR$.  That is, the Hilbert space $\HH$ decomposes as
\begin{align}
	\HH &= \int_{\RR}^{\oplus} \HH_{\l} \, ds(\l) \label{decomp}
\end{align}
where the $\HH_\l$ are Hilbert spaces with unitary representations $\pi_\l$ of $\SL(2,\RR)^d$, and for every $f \in \HH$ and $g \in \SL(2,\RR)^d$,
\[
	\pi(g)f = \int_{\RR}^{\oplus} \pi_{\l}(g)f_\l \, ds(\l).
\]
That is, the operators $\pi(g)$ are decomposable with respect to \eqref{decomp}.  Furthermore, $ds$-almost every $\pi_{\l}$ is an irreducible unitary representation of $\SL(2,\RR)^d$.  

Since the $\pi(g)$ decompose with respect to \eqref{decomp}, it is then clear that so do the operators in the universal enveloping algebra of $\Sl(2,\RR)^d$.  Therefore, the decomposition \eqref{decomp} also holds for Sobolev spaces
\[
	W^{\t}(\HH) = \int_{\RR}^{\oplus} W^{\t}(\HH_{\l}) \, ds(\l),
\]
and spaces of invariant distributions
\[
	\II_{X_1,\dots,X_d}^{\t}(\HH) = \int_{\RR}^{\oplus} \II_{X_1,\dots,X_d}^{\t}(\HH_{\l}) \, ds(\l).
\]
This allows us to prove Theorems \ref{a} and \ref{b} by treating irreducible representations, and ``glueing'' solutions to the coboundary equation across this decomposition.  

For this glueing to work, we will need the representation on $\HH$ to have a \emph{spectral gap} for each Casimir operator $\Box_1, \dots, \Box_d$.  By this, we mean that there exists a number $\mu_0>0$ that is less than every non-zero Casimir parameter appearing in the irreducible sub-representations in the above direct integral decomposition.  Notationally, the representation on $\HH$ has a \emph{spectral gap} if there is a number $\mu_0 > 0$ with $\s(\Box_i) \cap (0,\mu_0) = \emptyset$ for $i=1,\dots,d$.


\section{Top-degree cohomology} \label{dfold}

Let us collect some of the notation we have defined so far.  Our proof of Theorem \ref{a} involves an inductive step, where we will look at an irreducible representation of the $(d+1)$-fold product $\SL(2,\RR)\times\dots\times\SL(2,\RR)$ on the Hilbert space $\HH_1 \otimes\dots\otimes\HH_{d+1}$.  It is convenient for us to use \textbf{bold-faced} letters and symbols to index the last $d$ components of this tensor product.  For example, we now have $\bmu:=(\mu_2,\dots,\mu_{d+1})$.  The rest are listed below.


\subsection{A collection of the notation}

\begin{itemize}
	\item $\bmu:= (\mu_2, \dots, \mu_{d+1})$ \\ 
			$\bnu:= (\nu_2, \dots, \nu_{d+1})$ \\
			$i_{\bnu}:=(i_{\nu_2}, \dots, i_{\nu_{d+1}})$ \\
			$\bep:= (\e_2,\dots,\e_{d+1})$
	\begin{itemize}
		\item These multi-indices are the parameters that define the representation on the space $\HH_{2}\otimes\dots\otimes\HH_{d+1}:= \HH_{\bmu}$.
	\end{itemize}
	\item $J_{\bnu} = J_{\nu_2}\times\dots\times J_{\nu_{d+1}} \subset \ZZ^d$ \\
			$\bZ^{d}:= \bZ_{\bmu}^{d} = \Z_{\mu_2}\times\dots\times\Z_{\mu_{d+1}}\subset \ZZ^d$
			\begin{itemize}
				\item These are indexing sets for the basis of $\HH_{\bmu}$.  The two sets are essentially the same, but shifted by $i_{\bnu}$.  
			\end{itemize}
			$\Z:= \Z_{\mu_{1}}$
	\item $\left\|\bmu\right\| :=  \mu_2 + \dots + \mu_{d+1}$
	\item ${\bl}, {\bj} \in \bZ^d$ or $J_{\bnu}$ are elements of $\ZZ^d$
	\item $|{\bl}|^2$ and $|{\bj}|^2$ denote the (squares of the) usual Euclidean norms of ${\bl}$ and ${\bj}$ in $\ZZ^d \subset \RR^d$.
	\item $v_{\bl}:= v_{l_2}\otimes\dots\otimes v_{l_{d+1}}$
	\begin{itemize}
		\item We use the letter $v$ instead of $u$ to denote the adapted basis elements (defined in Section \ref{orthobasis}) of $\HH_2, \dots, \HH_{d+1}$, hoping that this makes the computations easier to read.
	\end{itemize}
	\item $S(\bmu) := \left\{ {\bn}=(\s_2,\dots, \s_{d+1}) \mid \s_{i} = \begin{cases} 0 \textrm{ or } 1 \textrm{ if } \mu_i \textrm{ is not in the discrete series} \\ 0 \textrm{ if it is.} \end{cases}\right\}$
	\begin{itemize}
		\item This is the set of $0$-$1$-vectors that indexes the $X_2, \dots, X_{d+1}$-invariant distributions on $\HH_{\bmu}$.
	\end{itemize}
	\item $\D_{\bn}^{\HH_{\bmu}}(v_{\bl}):= \D_{\s_2}^{\HH_2}(v_{i_2})\cdots\D_{\s_{d+1}}^{\HH_{d+1}}(v_{i_{d+1}})$
	\item For $s>1$ and $d \in \NN$, let $s_{d+1} = 2^{d}s + \sum_{i=0}^{d-1}2^i(2s + d-i)$.
\end{itemize}


\subsection{Preparatory lemmas}

Theorem \ref{a} for $d=1$ is just Theorem \ref{m}.  We take an inductive step for $(d+1)$-fold products.  Assume that for $d$-fold products, the obstructions to solving the coboundary equation come from invariant distributions.  

Now, in an irreducible unitary representation of $\SL(2,\RR)^{d+1}$, we take an element 
\[
	f \in W^{s}(\HH_{1}\otimes\dots\otimes\HH_{d+1}):=W^{s}(\HH_{1}\otimes\HH_{\bmu}),
\]
where $s>1$ and $\bmu:= (\mu_2, \dots, \mu_{d+1})$ denotes the Casimir parameters for the irreducible representations on the Hilbert spaces $\HH_{2}, \dots, \HH_{d+1}$.  Provided that $f \in \ker \II_{X_1,\dots,X_{d+1}}$, we would like to solve the coboundary equation
\begin{align*}
	f &= X_{1}\,g_{1} + \dots + X_{d+1}\, g_{d+1}.
\end{align*}
Our strategy is to split $f$ as $f = f_1 + f_{\bmu}$, where $f_1$ is in the kernel of all $X_1$-invariant distributions, and $f_{\bmu}$ is in the kernel of all $X_2,\dots,X_{d+1}$-invariant distributions.  To this end, define for $k \in J_{\nu_1}$, ${\bl} \in \bZ^d$ and ${\bn} \in S(\bmu)$, 
\begin{align}
	f_{1}(k,i_{\bnu}+2{\bl} + {\bn}) &= \frac{m({\bl})}{\D_{\bn}^{\HH_{\bmu}}(v_{i_{\bnu}+2{\bl} + {\bn}})}\cdot \sum_{{\bj} \in \bZ^d} f(k,i_{\bnu}+2{\bj} + {\bn})\,\D_{\bn}^{\HH_{\bmu}}(v_{i_{\bnu}+2{\bj} + {\bn}}), \label{dfoldf}
\end{align}
where $m: \bZ^{d} \rightarrow \CC$ such that $\sum_{{\bl} \in \bZ^d}m({\bl}) = 1$ and $|m({\bl})|$ decreases to $0$ exponentially fast as $|{\bl}| \rightarrow \infty$.  Use \eqref{dfoldf} to define $f_{\bmu} = f - f_1$.

\begin{remark*}
It is worth emphasizing that $f_1$ is only non-zero on points of the form $(k, i_{\bnu} + 2{\bl} + {\bn})$ where ${\bn} \in S(\bmu)$.  For $0$-$1$-vectors ${\bn}$ that do \emph{not} appear in $S(\bmu)$, we have implicitly put $f_{1}(k, i_{\bnu} + 2{\bl} + {\bn}) = 0$.  
\end{remark*}

\begin{remark*}
Our splitting $f=f_1+f_{\bmu}$ is reminiscent of constructions previously used in proofs of local differentiable rigidity for higher-rank abelian actions~\cite{DK10, DK11, KW11}, where a perturbed action gives rise to an ``almost $1$-cocycle'' $\tilde\o$.  Roughly speaking, this means that there is a splitting $\tilde\o = \o + \e$ that expresses $\tilde\o$ as the sum of a $1$-cocycle $\o$ and an error $\e$, and there are tame bounds comparing norms of the cocycle $\o$ to norms of $\tilde{\o}$, and norms of the error $\e$ to norms of the exterior derivative $\di\tilde\o=\di\e$.  We will establish similar bounds controlling the Sobolev norms of $f_1$ and $f_{\bmu}$ in terms of the Sobolev norms of $f$, in Lemma~\ref{sobolevnorms}.  However, we do not need $f_1$ or $f_{\bmu}$ to satisfy any cocycle identities.  Rather, the definition~\eqref{dfoldf} is engineered so that $f_1$ lies in $\ker\II_{X_1}(\HH_1)$ whenever one projects along some $\bl \in J_{\bnu}$ (in the sense described in Section~\ref{repprods}), and $f_{\bmu}$ lies in $\ker\II_{X_2,\dots,X_{d+1}}(\HH_{\bmu})$ whenever it is projected along some $k \in J_{\nu_1}$.  This important property is proved in Lemma~\ref{bobs} and is later used in an inductive step for the proof of Theorem~\ref{a} (see Theorem~\ref{irreda}).
\end{remark*}

The following lemma establishes that our splitting $f=f_1 + f_{\bmu}$ preserves the regularity of $f$.  We will use the fact that, for two positive numbers $A$ and $B$,
\begin{align*}
	(1 + A + B) &\leq (1+A)(1+B)
\end{align*}
and
\begin{align*}
	(1+A)^{m} (1+B)^{n} \leq (1 + A + B)^{m+n}.
\end{align*}

\begin{lemma}\label{sobolevnorms}
	$f_1$ and $f_{\bmu}$ have the same Sobolev order as $f$.  Furthermore, for every $s>0$ there is a constant $C_{\nu_0, s} >0$ such that the bounds
	\[
		\norm{f_1}_s \leq C_{\nu_0, s}\,\norm{f}_{2s+d}\quad\textrm{and}\quad\norm{f_{\bmu}}_s \leq C_{\nu_0, s}\,\norm{f}_{2s+d}
	\]
	hold, provided that $f$ has Sobolev order at least $2s + d$ to begin with.  This holds with the same constant $C_{\nu_0, s}$ in any representation from the principal or discrete series, and any complementary series representation with $\abs{\nu}\leq \nu_0 <1$.
\end{lemma}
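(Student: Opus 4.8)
The plan is to prove the norm bounds by working coordinate-wise in the basis $\{u_{k}\otimes v_{\bl}\}$ and estimating the Fourier-type coefficients of $f_1$ directly from the defining formula~\eqref{dfoldf}. The first point is that, because $f_1$ is supported only on indices of the form $(k, i_{\bnu}+2{\bl}+{\bn})$ with ${\bn}\in S(\bmu)$, and $f_{\bmu}=f-f_1$, it suffices to bound $\norm{f_1}_s$; the bound for $f_{\bmu}$ then follows from the triangle inequality together with $\norm{f}_s\le\norm{f}_{2s+d}$. To bound $\norm{f_1}_s$, I would fix $k$ and ${\bn}\in S(\bmu)$, and for each ${\bl}\in\bZ^d$ write
\[
	f_1(k, i_{\bnu}+2{\bl}+{\bn}) = \frac{m({\bl})}{\D_{\bn}^{\HH_{\bmu}}(v_{i_{\bnu}+2{\bl}+{\bn}})}\,S_{k,{\bn}},\qquad S_{k,{\bn}}:=\sum_{{\bj}\in\bZ^d} f(k, i_{\bnu}+2{\bj}+{\bn})\,\D_{\bn}^{\HH_{\bmu}}(v_{i_{\bnu}+2{\bj}+{\bn}}).
\]
The inner sum $S_{k,{\bn}}$ is, up to normalization, the value of the invariant distribution $\D_{\bn}^{\HH_{\bmu}}$ applied to the projected slice $(f\!\mid_{k})$, so by the Sobolev-order-$0$ estimate for $\D_{\bn}^{\HH_{\bmu}}$ established in Section~\ref{invdiststensor} (which used Lemma~\ref{degreeoneblue}), one gets $\abs{S_{k,{\bn}}}^2 \lesssim \norm{(f\!\mid_{k,{\bn}})}_{\eta}^2$ for any $\eta>0$, with a constant uniform in the stated class of representations. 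Summing the squared Sobolev norms of the coefficients $f_1(k,i_{\bnu}+2{\bl}+{\bn})$ over ${\bl}$ then factors as $\bigl(\sum_{{\bl}} (1+\norm{\bmu}+\cdots)^s\,\abs{m({\bl})}^2\,\abs{\D_{\bn}^{\HH_{\bmu}}(v_{i_{\bnu}+2{\bl}+{\bn}})}^{-2}\,\norm{v_{i_{\bnu}+2{\bl}+{\bn}}}^2\bigr)\cdot\abs{S_{k,{\bn}}}^2$; the first factor converges because $\abs{m({\bl})}$ decays exponentially while the other quantities grow only polynomially in $\abs{\bl}$, and this convergence is again uniform in the representation thanks to the $\abs{\nu}\le\nu_0$ restriction.

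The remaining step is to sum over $k\in J_{\nu_1}$ and over ${\bn}\in S(\bmu)$ (a finite set of size at most $2^d$), weighting by the $X_1$-direction part $(1+\mu_1+2(k+\e_1/2)^2)^{s}$ of the Sobolev weight for $W^s(\HH_1\otimes\HH_{\bmu})$. Here I would invoke the inequality $(1+A)^m(1+B)^n\le(1+A+B)^{m+n}$ displayed just before the lemma to combine the $\HH_1$-weight of order $s$ with the $\HH_{\bmu}$-weight of order $\eta$ coming from the distribution estimate, and then use the observation from Section~\ref{repprods} that $\sum_{k}(1+\mu_1+2(k+\e_1/2)^2)^{s}\,\norm{(f\!\mid_{k})}_{\eta}^2 \le \norm{f}_{s+\eta}^2$. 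Choosing $\eta = s+d$ (so that $s+\eta = 2s+d$, and so that the convergent sum $\sum_{\bk}(1+\norm{\bmu}+\cdots)^{-\eta-d/2}$ used inside the distribution estimate indeed converges — this needs $\eta>0$, hence the $2s+d$ in the hypothesis) closes the estimate and yields $\norm{f_1}_s\le C_{\nu_0,s}\norm{f}_{2s+d}$. The claim that $f_1$ and $f_{\bmu}$ have the same Sobolev order as $f$ follows from the displayed bounds in one direction and, in the other, from the fact that $f=f_1+f_{\bmu}$ together with the reverse inspection that each coefficient of $f$ is recovered from coefficients of $f_1$ and $f_{\bmu}$ with at most polynomial loss.

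The main obstacle I anticipate is keeping the constants genuinely uniform across the three families of representations simultaneously — principal, discrete, and complementary with $\abs{\nu}\le\nu_0$. In the complementary series the norms $\norm{v_{\bl}}^2 = \abs{\Pi_{\bnu,\bep,\abs{\bl}}}$ and the distribution values $\D_{\bn}^{\HH_{\bmu}}(v_{\bl})$ carry factors like $(1+l_i)^{\pm\nu_i}$ (via Lemma~\ref{fflemma}) and ratios like $(1-\nu)/(1+\nu)$, which degenerate as $\nu\to1$; one must check that the exponential decay of $\abs{m({\bl})}$ dominates these polynomial-in-$\abs{\bl}$ distortions with a bound depending only on $\nu_0$, and that the potentially singular prefactors $(1\pm\nu)^{\mp1}$ are absorbed into $C_{\nu_0,s}$ rather than into a representation-dependent constant. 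The discrete series requires separate bookkeeping because there $i_{\bnu}=(n_2,\dots,n_{d+1})$ is nonzero and the index set $\bZ^d$ is a genuinely truncated lattice, but the estimates there are cleaner since $\abs{\Pi}$-factors are comparable to powers of $\abs{\bl}$ with no small denominators. Assembling all of this is bookkeeping rather than conceptual difficulty, but it is where the proof will spend most of its length, and it is exactly the point at which Lemma~\ref{degreeoneblue} (with its $\nu_0$-uniform constant) does the essential work.
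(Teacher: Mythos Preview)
Your outline follows the paper's proof closely: expand $\norm{f_1}_s^2$ in the basis, apply Cauchy--Schwarz to the inner sum defining $S_{k,\bn}$, control the resulting $\bl$-sum using the exponential decay of $m$, and sum over $k$ via the inequality $\sum_k (1+\mu_1+2(k+\e_1/2)^2)^s\norm{(f\mid_k)}_\eta^2 \le \norm{f}_{s+\eta}^2$ from Section~\ref{repprods}. The choice $\eta=s+d$ is exactly the paper's.

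There is, however, a real gap in the uniformity claim. You assert separately that $\abs{S_{k,\bn}}^2 \lesssim \norm{(f\mid_k)}_\eta^2$ with a constant uniform in the representation, and that the factor
\[
\sum_{\bl}(1+\norm{\bmu}+2\abs{i_{\bnu}+2\bl+\bn+\tfrac{\bep}{2}}^2)^s\,\abs{m(\bl)}^2\,\frac{\norm{v_{i_{\bnu}+2\bl+\bn}}^2}{\abs{\D_{\bn}^{\HH_{\bmu}}(v_{i_{\bnu}+2\bl+\bn})}^2}
\]
is uniformly bounded. The second claim is false: by the lower bound in Lemma~\ref{degreeoneblue}, the ratio $\norm{v}^2/\abs{\D}^2$ contributes a factor comparable to $\prod_i(1+\mu_i+\dots)^{1/2}/\mu_i^{p_i}$, and after separating out the $(1+\norm{\bmu})^s$ weight this sum grows like a positive power of $(1+\norm{\bmu})$ as the Casimir parameters tend to infinity (take all factors principal with $\mu_i\to\infty$). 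The $\abs{\nu}\le\nu_0$ restriction is irrelevant here, since it only constrains complementary-series factors.

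What actually happens is that the implied constant in your bound for $\abs{S_{k,\bn}}^2$ \emph{decays} in $\norm{\bmu}$ (via the upper bound in Lemma~\ref{degreeoneblue}, which puts $\prod_i \mu_i^{p_i}$ in the numerator), and this decay exactly cancels the growth of the $\bl$-sum. The paper makes this cancellation explicit: it applies both halves of Lemma~\ref{degreeoneblue} simultaneously to the two bracketed sums (its factors $A$ and $B$), so that the $\mu^p$ terms cancel and one is left with $(1+\norm{\bmu})^{s+d}$ from the $\bl$-sum against $(1+\norm{\bmu})^{-(s+d)}$ from the $\bj$-sum, yielding a genuinely uniform constant $C_{\nu_0,s}$. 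Your argument will go through once you stop treating the two factors as independently uniform and instead track the $\bmu$-dependence through their product.

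A smaller point: the ``same Sobolev order'' assertion does not follow from the displayed norm bounds, which lose $s+d$ derivatives. The paper handles this separately by computing $\norm{f_1}_{s-\delta}$ for arbitrary $\delta>0$ (with a $\delta$-dependent, representation-dependent constant), which is a distinct and slightly more delicate estimate than the quantitative bound.
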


\begin{proof}
	Suppose $f \in W^{s}(\HH_1\otimes\HH_{\bmu})$ for some $s >0$, and let $\d >0$ be smaller than $s$.  For $f_1$, compute
	\begin{align*}
		\left\| f_1 \right\|_{s-\d}^{2} &= \sum_{(k,\bl) \in J_{\nu_1}\times J_{\bnu}}(1+\mu_1 + \left\|\bmu\right\| + 2(k + \frac{\e}{2})^2 + 2|{\bl} + \frac{\bep}{2}|^2)^{s-\d}\,|f_{1}(k, {\bl})|^2 \, \left\| u_{k}\otimes v_{\bl} \right\|^2 \\
			&= \sum_{{\bn} \in S(\bmu)}\sum_{\substack{k \in J_{\nu1} \\ {\bl} \in \bZ^d}}(1+\mu_1 + \left\|\bmu\right\| + 2(k+ \frac{\e}{2})^2 + 2\,|i_{\bnu}+2{\bl} + {\bn} + \frac{\bep}{2}|^2)^{s-\d} \\ 
			&\times\left|\frac{m({\bl})}{\D_{\bn}^{\HH_{\bmu}}(v_{i_{\bnu}+2{\bl} + {\bn}})}\right|^2 \, \left| \sum_{{\bj} \in \bZ^d} f(k,i_{\bnu}+2{\bj} + {\bn})\,\D_{\bn}^{\HH_{\bmu}}(v_{i_{\bnu}+2{\bj} + {\bn}})\right|^2 \, \left\| u_{k}\otimes v_{i_{\bnu}+2{\bl} + {\bn}}\right\|^{2}. \label{bcs1}
	\end{align*}
	We use the Cauchy--Schwartz inequality to bound the terms in the last line, giving	
	\begin{multline*}
			\leq \sum_{{\bn} \in S(\bmu)}\sum_{\substack{k \in J_{\nu_1} \\ {\bl}\in \bZ^d}}(1+\mu_1 + \left\|\bmu\right\| + 2(k+ \frac{\e}{2})^2 + 2\,|i_{\bnu}+2{\bl}+{\bn}+ \frac{\bep}{2}|^2)^{s-\d}\left|\frac{m({\bl})}{\D_{\bn}^{\HH_{\bmu}}(v_{i_{\bnu}+2{\bl}+{\bn}})}\right|^2 \\ 
			\times \left[ \sum_{{\bj} \in \bZ^d} (1 + \left\|\bmu\right\| + 2\,|i_{\bnu}+2{\bj}+{\bn}+ \frac{\bep}{2}|^2)^{\d}\, |f(k,i_{\bnu}+2{\bj}+{\bn})|^2 \,\left\| u_{k} \right\|^2\,\left\| v_{i_{\bnu}+2{\bj}+{\bn}} \right\|^2 \right]\\
			\times \left[\sum_{{\bj} \in \bZ^d} (1 + \left\|\bmu\right\| + 2\,|i_{\bnu}+2{\bj}+{\bn}+ \frac{\bep}{2}|^2)^{-\d}\,\frac{|\D_{\bn}^{\HH_{\bmu}}(v_{i_{\bnu}+2{\bj}+{\bn}})|^2}{\left\| u_{k} \right\|^2\,\left\| v_{i_{\bnu}+2{\bj}+{\bn}} \right\|^2}\right] \, \left\| u_{k}\otimes v_{i_{\bnu}+2{\bl}+{\bn}}\right\|^{2}.
	\end{multline*} 
		Continuing,
		\begin{multline*}
			\leq \sum_{{\bn} \in S(\bmu)}\sum_{\substack{k \in J_{\nu_1} \\ {\bl} \in \bZ^d}}(1+\mu_1 + \left\|\bmu\right\| + 2k^2 + 2\,|i_{\bnu}+2{\bl}+{\bn} + \frac{\bep}{2}|^2)^{s-\d}\,\left|\frac{m({\bl})}{\D_{\bn}^{\HH_{\bmu}}(v_{i_{\bnu}+2{\bl}+{\bn}})}\right|^2 \, \left\| (f\mid_{k}) \right\|_{\d}^{2}\\
			\times \left[\sum_{{\bj} \in \bZ^d} (1 + \left\|\bmu\right\| + 2\,|i_{\bnu}+2{\bj}+{\bn}+ \frac{\bep}{2}|^2)^{-\d}\,\frac{|\D_{\bn}^{\HH_{\bmu}}(v_{i_{\bnu}+2{\bj}+{\bn}})|^2}{\left\| v_{i_{\bnu}+2{\bj}+{\bn}} \right\|^2}\right] \, \left\| v_{i_{\bnu}+2{\bl}+{\bn}}\right\|^{2}, 
		\end{multline*}
		by the definition of $\left\|(f\mid_{k})\right\|_{\d}$.  Finally, since 
		\[
			(1+A+B) \leq (1+A)(1+B),
		\]
		we have
		\begin{multline*}
			\leq \sum_{{\bn} \in S(\bmu)}\left[\sum_{k \in J_{\nu_1}}(1+\mu_1 + 2(k+ \frac{\e}{2})^2)^{s-\d} \, \left\| (f\mid_{k}) \right\|_{\d}^{2}\right]\\
			\times\left[\sum_{{\bj} \in \bZ^d} (1 + \left\|\bmu\right\| + 2\,|i_{\bnu}+2{\bj}+{\bn}+ \frac{\bep}{2}|^2)^{-\d}\,\frac{|\D_{\bn}^{\HH_{\bmu}}(v_{i_{\bnu}+2{\bj}+{\bn}})|^2}{\left\| v_{i_{\bnu}+2{\bj}+{\bn}} \right\|^2}\right] \\
			\times \left[\sum_{{\bl} \in \bZ^d}(1+\left\|\bmu\right\| + 2\,|i_{\bnu}+2{\bl}+{\bn} + \frac{\bep}{2}|^2)^{s-\d}\,\left|\frac{m({\bl})}{\D_{\bn}^{\HH_{\bmu}}(v_{i_{\bnu}+2{\bl}+{\bn}})}\right|^2 \, \left\| v_{i_{\bnu}+2{\bl}+{\bn}}\right\|^2\right].
		\end{multline*}
		Using, from Section \ref{repprods}, that
		\[
			\sum_{k \in J_{\nu_1}}(1+\mu_1 + 2(k+ \frac{\e}{2})^2)^{s-\d} \, \left\| (f\mid_{k}) \right\|_{\d}^{2} \leq \left\| f \right\|_{s-\d + \d}^{2} = \left\| f \right\|_{s}^{2},
		\]
		we have
	\begin{multline*}
		\left\| f_1 \right\|_{s-\d}^{2} \leq \sum_{{\bn} \in S(\bmu)}\left[\sum_{{\bj} \in \bZ^d} (1 + \left\|\bmu\right\| + 2\,|i_{\bnu}+2{\bj}+{\bn}+ \frac{\bep}{2}|^2)^{-\d}\,\frac{|\D_{\bn}^{\HH_{\bmu}}(v_{i_{\bnu}+2{\bj}+{\bn}})|^2}{\left\| v_{i_{\bnu}+2{\bj}+{\bn}} \right\|^2}\right] \\
			\times \left[\sum_{{\bl} \in \bZ^d}(1+\left\|\bmu\right\| + 2\,|i_{\bnu}+2{\bl}+{\bn}+ \frac{\bep}{2}|^2)^{s-\d}\,\left|\frac{m({\bl})}{\D_{\bn}^{\HH_{\bmu}}(v_{i_{\bnu}+2{\bl}+{\bn}})}\right|^2 \, \left\| v_{i_{\bnu}+2{\bl}+{\bn}}\right\|^2\right]\left\| f \right\|_{s}^{2}. 
	\end{multline*}
	By Lemma \ref{degreeoneblue}, and because $\d > 0$, the term in the first line is finite.  The term in the second line is finite because $m({\bl})$ decays exponentially.  This proves that $f_{1} \in W^{s-\d}(\HH_{1}\otimes\HH_{\bmu})$, and by taking arbitrarily small $\d >0$, we see that $f_1$ must have the same Sobolev order as $f$, which immediately implies that $f_{\bmu}$ also has the same Sobolev order.
	
	For the norm estimates, we carry out a very similar calculation, arriving at
	\begin{multline*}
	\norm{f_1}_s^2 \leq \sum_{{\bn} \in S(\bmu)}\left[\sum_{{\bj} \in \bZ^d} (1 + \left\|\bmu\right\| + 2\,|i_{\bnu}+2{\bj}+{\bn}+ \frac{\bep}{2}|^2)^{-(s+d)}\,\frac{|\D_{\bn}^{\HH_{\bmu}}(v_{i_{\bnu}+2{\bj}+{\bn}})|^2}{\left\| v_{i_{\bnu}+2{\bj}+{\bn}} \right\|^2}\right] \\
			\times \left[\sum_{{\bl} \in \bZ^d}(1+\left\|\bmu\right\| + 2\,|i_{\bnu}+2{\bl}+{\bn}+ \frac{\bep}{2}|^2)^{s}\,\left|\frac{m({\bl})}{\D_{\bn}^{\HH_{\bmu}}(v_{i_{\bnu}+2{\bl}+{\bn}})}\right|^2 \, \left\| v_{i_{\bnu}+2{\bl}+{\bn}}\right\|^2\right] \left\| f \right\|_{2s+d}^{2}. 
	\end{multline*}
	Lemma~\ref{degreeoneblue} and the observation that 
	\[
		(1+A+B)\leq(1+A)(1+B)\leq(1+A+B)^2
	\]
	imply that there is some constant $C_{\nu_0} >0$ such that
	\begin{multline}\label{firstsecond}
	\norm{f_1}_s^2 \leq C_{\nu_0}\,\sum_{{\bn} \in S(\bmu)}\left[\sum_{{\bj} \in \bZ^d} (1 + \left\|\bmu\right\| + 2\,|i_{\bnu}+2{\bj}+{\bn}+ \frac{\bep}{2}|^2)^{-(s+d)-\frac{d}{2}}\right] \\
			\times \left[\sum_{{\bl} \in \bZ^d}(1+\left\|\bmu\right\| + 2\,|i_{\bnu}+2{\bl}+{\bn}+ \frac{\bep}{2}|^2)^{s+d}\,\abs{m({\bl})}^2\right] \left\| f \right\|_{2s+d}^{2}. 
	\end{multline}
	The last term in~\eqref{firstsecond} is bounded by
	\begin{multline*}
	\sum_{{\bl} \in \bZ^d}(1+\left\|\bmu\right\| + 2\,|i_{\bnu}+2{\bl}+{\bn}+ \frac{\bep}{2}|^2)^{s+d}\,\abs{m({\bl})}^2 \\
	\leq (1+\left\|\bmu\right\|)^{s+d}\left[\sum_{{\bl} \in \bZ^d}(1+ 2\,|i_{\bnu}+2{\bl}+{\bn}+ \frac{\bep}{2}|^2)^{s+d}\,\abs{m({\bl})}^2\right]
	\end{multline*}
	and the term in brackets converges and is bounded by some constant $C_s>0$ that only depends on $s$.  We therefore have,
	\[
		\leq C_{s} (1+\left\|\bmu\right\|)^{s+d}.
	\]
	The first term in~\eqref{firstsecond} is bounded by an integral that is bounded by
	\[
		\leq \frac{C_s}{(1 + \norm{\bmu})^{s+d}},
	\]
	where $C_s>0$ is some other constant that only depends on $s$.  Combining the constants, we have shown that there exists $C_{\nu_0, s}>0$ such that 
	\begin{align*}
		\norm{f_1}_{s}^2 \leq C_{\nu_0, s}\,\norm{f}_{2s+d}^2
	\end{align*}
	as desired.  Since $f_{\bmu} = f - f_1$, we can just replace the above constant $C_{\nu_0, s}$ with $C_{\nu_0, s}+1$, and use the triangle inequality to get 
	\begin{align*}
		\norm{f_{\bmu}}_{s}^2 \leq C_{\nu_0, s}\,\norm{f}_{2s+d}^2.
	\end{align*}
	The constant only depends on $s$, and is uniform over all principal and discrete series representations, and all complementary series representations with $\abs{\nu} \leq \nu_0$, where $\nu_0 \in (0,1)$.  This completes the proof of the Lemma.
\end{proof}

The following lemma is then automatic.

\begin{lemma} \label{breg}
	If $f \in W^{s}(\HH_{1}\otimes\HH_{\bmu})$ with $s>0$, then $(f_{1}\mid_{\bl}) \in W^{s}(\HH_{1})$ for all ${\bl} \in J_{\bnu}$ and $(f_{\bmu}\mid_{k}) \in W^{s}(\HH_{\bmu})$ for all $k \in J_{\nu_1}$.
\end{lemma}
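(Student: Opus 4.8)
The plan is to obtain this immediately from Lemma~\ref{sobolevnorms} combined with the elementary observation, recorded in Section~\ref{repprods}, that passing to a coordinate slice cannot increase a Sobolev norm. Concretely, comparing the defining series term by term shows that for every order $\t$ and every fixed $\bl\in J_{\bnu}$ one has $\|(h\mid_{\bl})\|_\t\le\|h\|_\t$, with $(h\mid_{\bl})\in\HH_1$, and that for every fixed $k\in J_{\nu_1}$ one has $\|(h\mid_k)\|_\t\le\|h\|_\t$, with $(h\mid_k)\in\HH_{\bmu}$; thus the restriction maps $h\mapsto(h\mid_{\bl})$ and $h\mapsto(h\mid_k)$ are bounded from $W^\t(\HH_1\otimes\HH_{\bmu})$ into $W^\t(\HH_1)$ and $W^\t(\HH_{\bmu})$ respectively.

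First I would invoke Lemma~\ref{sobolevnorms}: since $f\in W^s(\HH_1\otimes\HH_{\bmu})$, the functions $f_1$ and $f_{\bmu}=f-f_1$ inherit the Sobolev regularity of $f$, so in particular lie in $W^s(\HH_1\otimes\HH_{\bmu})$. Feeding this into the restriction inequality of the previous paragraph then gives $(f_1\mid_{\bl})\in W^s(\HH_1)$ for every $\bl\in J_{\bnu}$ and $(f_{\bmu}\mid_k)\in W^s(\HH_{\bmu})$ for every $k\in J_{\nu_1}$, which is precisely the assertion. So the only content beyond Lemma~\ref{sobolevnorms} is this monotonicity, and there is no real obstacle here.

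For transparency I would also record the hands-on version of the statement for $f_1$, since it makes clear that no cancellation is being used. By the definition~\eqref{dfoldf}, the slice $(f_1\mid_{\bl})$ is the zero vector unless $\bl$ has the form $i_{\bnu}+2\bj+\bn$ with $\bj\in\bZ^d$ and $\bn\in S(\bmu)$, and in that case it equals a fixed scalar multiple of $\sum_{k\in J_{\nu_1}}\D_{\bn}^{\HH_{\bmu}}\big((f\mid_k)\big)\,\|u_k\|^{-1}u_k$. Since $\D_{\bn}^{\HH_{\bmu}}$ has non-positive Sobolev order (Section~\ref{invdiststensor}), the numbers $\D_{\bn}^{\HH_{\bmu}}((f\mid_k))$ are bounded by a constant times $\|(f\mid_k)\|_\d$ for any $\d>0$, so $\|(f_1\mid_{\bl})\|_s^2\le C\sum_{k\in J_{\nu_1}}(1+\mu_1+2(k+\tfrac{\e}{2})^2)^s\|(f\mid_k)\|_\d^2\le C\|f\|_{s+\d}^2$, using the same inequality $\sum_k(1+\mu_1+2(k+\tfrac{\e}{2})^2)^s\|(f\mid_k)\|_\d^2\le\|f\|_{s+\d}^2$ already exploited in Lemma~\ref{sobolevnorms}; this again exhibits $(f_1\mid_{\bl})$ as inheriting the Sobolev regularity of $f$.

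The only thing requiring care is bookkeeping: keeping straight which slice of which function lands in $\HH_1$ and which in $\HH_{\bmu}$, and remembering that $f_1$ is supported, in its last $d$ indices, only on the sectors indexed by $\bn\in S(\bmu)$, so that the remaining values of $\bl\in J_{\bnu}$ contribute the zero vector, trivially in $W^s(\HH_1)$.
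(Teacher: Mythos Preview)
Your proposal is correct and matches the paper's own treatment: the paper simply declares the lemma ``automatic'' from Lemma~\ref{sobolevnorms}, and your argument spells out exactly the intended reasoning---namely that $f_1,f_{\bmu}\in W^s(\HH_1\otimes\HH_{\bmu})$ by Lemma~\ref{sobolevnorms}, and then the restriction inequality $\|(h\mid_{\bl})\|_\t\le\|h\|_\t$ from Section~\ref{repprods} finishes the job. The extra hands-on paragraph is harmless but unnecessary.
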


The next lemma shows that $(f_{1}\mid_{\bl})$ is in the kernel of all $X_1$-invariant distributions on $\HH_{1}$, and that $(f_{\bmu}\mid_{k})$ is in the kernel of all $X_2, \dots, X_{d+1}$-invariant distributions on $\HH_{\bmu}$.

\begin{lemma} \label{bobs}
	For every ${\bl} \in J_{\bnu}$, we have $(f_{1}\mid_{\bl}) \in \ker\II_{X_1}^{s}(\HH_{1})$.  Similarly, for every $k \in J_{\nu_1}$, we have $(f_{\bmu}\mid_{k}) \in \ker\II_{X_2,\dots,X_{d+1}}^{s}(\HH_{\bmu})$.
\end{lemma}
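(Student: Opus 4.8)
The plan is to use that in an irreducible representation the relevant spaces of invariant distributions are spanned by the explicit distributions $\D_\sigma^{\HH_1}$ (Section~\ref{invariantdists}) and $\D_{\bn}^{\HH_{\bmu}}$, $\bn\in S(\bmu)$ (Section~\ref{invdiststensor}), so each of the two claims reduces to a finite check, and then to unwind the definition~\eqref{dfoldf} of the splitting $f=f_1+f_{\bmu}$ by direct computation.

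For the first claim, I would fix $\bl\in J_{\bnu}$ and write $\bl=i_{\bnu}+2\bl_0+\bn$, where $\bn\in\{0,1\}^d$ records the parities of the components of $\bl-i_{\bnu}$. If $\bn\notin S(\bmu)$, then $f_1$ vanishes at every $(k,\bl)$ by the convention following~\eqref{dfoldf}, so $(f_1\mid_\bl)=0$ and there is nothing to prove; otherwise $\bl_0\in\bZ^d$, and substituting~\eqref{dfoldf} into the definition of the projection (Section~\ref{repprods}) exhibits $(f_1\mid_\bl)$ as a nonzero scalar, independent of $k$, times $\sum_{k\in J_{\nu_1}}\big(\sum_{\bj\in\bZ^d}f(k,i_{\bnu}+2\bj+\bn)\,\D_{\bn}^{\HH_{\bmu}}(v_{i_{\bnu}+2\bj+\bn})\big)\,u_k$. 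By Section~\ref{invariantdists} it then suffices to apply $\D_\sigma^{\HH_1}$ for $\sigma=0$, and for $\sigma=1$ when $\mu_1$ does not parametrize a discrete series. Using Lemma~\ref{breg}, the inequality $\norm{(f\mid_k)}_s\le\norm{f}_s$ from Section~\ref{repprods}, and the fact that the product distribution $\D_\sigma^{\HH_1}\otimes\D_{\bn}^{\HH_{\bmu}}$ has Sobolev order at most $0$ (Section~\ref{invdiststensor}), the resulting double sum over $k$ and $\bj$ is absolutely convergent; interchanging summations and using that $\D_{\bn}^{\HH_{\bmu}}(v_{\bl})=0$ unless $\bl\equiv i_{\bnu}+\bn\pmod{2\ZZ^d}$ identifies it with $\big(\D_\sigma^{\HH_1}\otimes\D_{\bn}^{\HH_{\bmu}}\big)(f)$. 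This vanishes because $\D_\sigma^{\HH_1}\otimes\D_{\bn}^{\HH_{\bmu}}$ is an $X_1,\dots,X_{d+1}$-invariant distribution on $\HH_1\otimes\HH_{\bmu}$ and $f\in\ker\II_{X_1,\dots,X_{d+1}}$.

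For the second claim, I would fix $k\in J_{\nu_1}$; by Section~\ref{invdiststensor} it suffices to show $\D_{\bn}^{\HH_{\bmu}}\big((f_{\bmu}\mid_k)\big)=0$ for every $\bn\in S(\bmu)$. Since $\D_{\bn}^{\HH_{\bmu}}(v_{\bl})=0$ unless $\bl=i_{\bnu}+2\bj+\bn$ for some $\bj\in\bZ^d$ (by~\eqref{D0formula}--\eqref{D1formula}), this pairing collapses to $\norm{u_k}\sum_{\bj\in\bZ^d}f_{\bmu}(k,i_{\bnu}+2\bj+\bn)\,\D_{\bn}^{\HH_{\bmu}}(v_{i_{\bnu}+2\bj+\bn})$. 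Writing $f_{\bmu}=f-f_1$ and inserting~\eqref{dfoldf} gives $f_1(k,i_{\bnu}+2\bj+\bn)\,\D_{\bn}^{\HH_{\bmu}}(v_{i_{\bnu}+2\bj+\bn})=m(\bj)\sum_{\bj'\in\bZ^d}f(k,i_{\bnu}+2\bj'+\bn)\,\D_{\bn}^{\HH_{\bmu}}(v_{i_{\bnu}+2\bj'+\bn})$; summing over $\bj$ and using $\sum_{\bj\in\bZ^d}m(\bj)=1$ (the inner sum being finite because $(f\mid_k)\in W^s(\HH_{\bmu})$ and $\D_{\bn}^{\HH_{\bmu}}$ has order at most $0$) shows that the $f_1$-contribution cancels the $f$-contribution exactly, so the pairing is $0$.

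The computations are short, and conceptually there is no difficulty: the definition~\eqref{dfoldf} is arranged precisely so that applying an $X_1$-invariant distribution to a projection $(f_1\mid_\bl)$ reassembles a full $X_1,\dots,X_{d+1}$-invariant distribution applied to $f$, and so that the $f_1$-part of $(f_{\bmu}\mid_k)$ reproduces the $f$-part via the normalization $\sum_\bl m(\bl)=1$. The only points needing care are bookkeeping ones — the parity decomposition $\bl=i_{\bnu}+2\bl_0+\bn$, the degenerate case $\bn\notin S(\bmu)$, and the justification of the rearrangements of absolutely convergent sums — all of which are supplied by Lemma~\ref{breg}, the estimate $\norm{(f\mid_\bullet)}_\s\le\norm{f}_\s$, and the Sobolev-order bounds for invariant distributions from Section~\ref{invdiststensor}. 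I therefore do not anticipate a genuine obstacle here; the real content lies in the later inductive step (Theorem~\ref{irreda}) that this lemma feeds.
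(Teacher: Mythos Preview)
Your proposal is correct and follows essentially the same approach as the paper's proof: both parts are handled by direct computation, applying the basis distributions $\D_\sigma^{\HH_1}$ (respectively $\D_{\bn}^{\HH_{\bmu}}$) to the projected vectors, substituting the definition~\eqref{dfoldf}, and recognizing in the first part the product distribution $\D_{\sigma,\bn}^{\HH_1\otimes\HH_{\bmu}}(f)=0$ and in the second part the cancellation via $\sum_{\bl}m(\bl)=1$. You are somewhat more explicit than the paper about the degenerate case $\bn\notin S(\bmu)$ and about the absolute convergence needed to justify the rearrangements, but the argument is the same.
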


\begin{proof}
We do the calculations for $\D_{0}^{\HH_{1}}$, $\D_{1}^{\HH_{1}}$, and $\D_{\bn}^{\HH_{\bmu}}$.  First, for ${\bl} \in \bZ^d$ and any ${\bn} \in S(\bmu)$,
\begin{align*}
	&\D_{0}^{\HH_1}(f_{1}\mid_{i_{\bnu}+2{\bl} + {\bn}}) \\
		&\indent = \sum_{k \in \Z} f_{1}(i_{\nu_1}+2k,i_{\bnu}+2{\bl} + {\bn})\,\left\| v_{i_{\bnu}+2{\bl} + {\bn}} \right\| \, \D_{0}^{\HH_1}(u_{i_{\nu_1}+2k}) \\
		&\indent = \sum_{k \in \Z} \, \left[\frac{m({\bl})}{\D_{\bn}^{\HH_{\bmu}}(v_{i_{\bnu}+2{\bl} + {\bn}})}\cdot \sum_{{\bj} \in \bZ^d} f(i_{\nu_1}+2k,i_{\bnu}+2{\bj} + {\bn})\,\D_{\bn}^{\HH_{\bmu}}(v_{i_{\bnu}+2{\bj} + {\bn}})\right]\\
		&\indent\indent\times\left\| v_{i_{\bnu}+2{\bl} + {\bn}} \right\| \, \D_{0}^{\HH_1}(u_{i_{\nu_1}+2k}) \\
		&\indent = \frac{m({\bl})\, \left\| v_{i_{\bnu}+2{\bl} + {\bn}} \right\|}{\D_{\bn}^{\HH_{\bmu}}(v_{i_{\bnu}+2{\bl} + {\bn}})}\cdot \sum_{k \in \Z} \, \sum_{{\bj} \in \bZ^d} f(i_{\nu_1}+2k,i_{\bnu}+2{\bj} + {\bn})\,\D_{\bn}^{\HH_{\bmu}}(v_{i_{\bnu}+2{\bj} + {\bn}})\,\D_{0}^{\HH_1}(u_{i_{\nu_1}+2k}) \\
		&\indent = \frac{m({\bl})\, \left\| v_{i_{\bnu}+2{\bl} + {\bn}} \right\|}{\D_{\bn}^{\HH_{\bmu}}(v_{i_{\bnu}+2{\bl} + {\bn}})}\cdot \D_{0,\bn}^{\HH_{1} \otimes \HH_{\bmu}}(f) \\
		&\indent = 0,
\end{align*}
and, if $\HH_1$ is not from the discrete series,
\begin{align*}
	&\D_{1}^{\HH_1}(f_{1}\mid_{i_{\bnu}+2{\bl} + {\bn}}) \\
		&\indent = \sum_{k \in \Z} f_{1}(i_{\nu_1}+2k+1,i_{\bnu}+2{\bl} + {\bn})\,\left\| v_{i_{\bnu}+2{\bl} + {\bn}} \right\| \, \D_{1}^{\HH_1}(u_{i_{\nu_1}+2k+1}) \\
		&\indent = \sum_{k \in \Z} \, \left[\frac{m({\bl})}{\D_{\bn}^{\HH_{\bmu}}(v_{i_{\bnu}+2{\bl} + {\bn}})}\cdot \sum_{{\bj} \in \bZ^d} f(i_{\nu_1}+2k+1,i_{\bnu}+2{\bj} + {\bn})\,\D_{\bn}^{\HH_{\bmu}}(v_{i_{\bnu}+2{\bj} + {\bn}})\right]\\
		&\indent\indent\times\left\| v_{i_{\bnu}+2{\bl} + {\bn}} \right\| \, \D_{1}^{\HH_1}(u_{i_{\nu_1}+2k+1}) \\
		&\indent = \frac{m({\bl})\, \left\| v_{i_{\bnu}+2{\bl} + {\bn}} \right\|}{\D_{\bn}^{\HH_{\bmu}}(v_{i_{\bnu}+2{\bl} + {\bn}})}\cdot \sum_{k \in \Z} \, \sum_{{\bj} \in \bZ^d} f(i_{\nu_1}+2k+1,i_{\bnu}+2{\bj} + {\bn})\,\D_{\bn}^{\HH_{\bmu}}(v_{i_{\bnu}+2{\bj} + {\bn}})\,\D_{1}^{\HH_1}(u_{i_{\nu_1}+2k+1}) \\
		&\indent = \frac{m({\bl})\, \left\| v_{i_{\bnu}+2{\bl} + {\bn}} \right\|}{\D_{\bn}^{\HH_{\bmu}}(v_{i_{\bnu}+2{\bl} + {\bn}})}\cdot \D_{1,\bn}^{\HH_{1}\otimes\HH_{\bmu}}(f) \\
		&\indent = 0.
\end{align*}
We have just shown, for every ${\bl} \in J_{\bnu}$, that $(f_{1}\mid_{\bl}) \in W^{s}(\HH_{1})$ is in the kernel of every $X_1$-invariant distribution.

For $f_{\bmu}$, the calculations are somewhat quicker.  For any ${\bn} \in S(\bmu)$,
\begin{align*}
	&\D_{\bn}^{\HH_{\bmu}}(f_{\bmu}\mid_{k})  \\
		&= \sum_{{\bl} \in \bZ^d} f_{\bmu}(k, i_{\bnu}+2{\bl}+{\bn})\,\left\| u_{k} \right\| \, \D_{\bn}^{\HH_{\bmu}}(v_{i_{\bnu}+2{\bl}+{\bn}}) \\
		&= \sum_{{\bl} \in \bZ^d} \left[f(k,i_{\bnu}+2{\bl}+{\bn}) - \frac{m({\bl})}{\D_{\bn}^{\HH_{\bmu}}(v_{i_{\bnu}+2{\bl}+{\bn}})}\cdot \sum_{{\bj} \in \bZ^d} f(k,i_{\bnu}+2{\bj}+{\bn})\,\D_{\bn}^{\HH_{\bmu}}(v_{i_{\bnu}+2{\bj}+{\bn}})\right]\\
		&\indent\times\left\| u_{k} \right\| \, \D_{\bn}^{\HH_{\bmu}}(v_{i_{\bnu}+2{\bl}+{\bn}}) \\
		&= \sum_{{\bl} \in \bZ^d} f(k,i_{\bnu}+2{\bl}+{\bn})\,\left\| u_{k} \right\| \, \D_{\bn}^{\HH_{\bmu}}(v_{i_{\bnu}+2{\bl}+{\bn}}) \\
		&\indent\indent\indent- \sum_{{\bl} \in \bZ^d}m({\bl})\cdot \sum_{{\bj} \in \bZ^d} f(k,i_{\bnu}+2{\bj}+{\bn})\,\left\| u_{k} \right\| \, \D_{\bn}^{\HH_{\bmu}}(v_{i_{\bnu}+2{\bj}+{\bn}}) \\
		&= 0,
\end{align*}
proving the lemma.
\end{proof}

We are now prepared to state the proof of a version of Theorem \ref{a} for irreducible representations, from which will follow Theorem \ref{a}. 


\subsection{Irreducible case}

\begin{theorem} \label{irreda}
	Let $\HH = \HH_{1}\otimes\dots\otimes\HH_{d}$ be the Hilbert space of an irreducible unitary representation of $\SL(2,\RR)^d$, and let
	\[
	\min\{\mu_1, \dots, \mu_{d}\} > \mu_0 > 0.
	\]
	Then, for every $s > 1$ and $t < s-1$, there is a constant $C_{\mu_0,s,t}$ such that, for every $f \in \ker \II_{X_1,\dots,X_d}^{s}(\HH)$, there exist $g_1,\dots, g_d \in W^{t}(\HH)$ satisfying the degree-$d$ coboundary equation \eqref{cobeq} for $f$, and satisfying the Sobolev estimates
	\begin{align*} 
		\left\| g_{i} \right\|_{t} &\leq C_{\mu_0,s,t} \,\left\| f \right\|_{s_d}
	\end{align*}  
	for $i=1, \dots, d$.
\end{theorem}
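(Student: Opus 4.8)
The proof runs by induction on the number $d$ of $\SL(2,\RR)$ factors, and Theorem~\ref{irreda} is itself the inductive statement: the base case $d=1$ is the second alternative of Theorem~\ref{m} (Mieczkowski), since $s_1=s$. For the inductive step I would assume Theorem~\ref{irreda} for $d$-fold products and deduce it for $(d+1)$-fold products, in the notation fixed in this section, so that $\HH=\HH_1\otimes\HH_{\bmu}$ with $\HH_{\bmu}=\HH_2\otimes\dots\otimes\HH_{d+1}$ an irreducible $d$-fold product whose Casimir parameters $\mu_2,\dots,\mu_{d+1}$ all exceed $\mu_0$, and $\mu_1>\mu_0$ as well. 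Given $f\in\ker\II^{s}_{X_1,\dots,X_{d+1}}(\HH)$ — in the irreducible case this kernel does not depend on the Sobolev order, since by Section~\ref{invdiststensor} the invariant distributions have order at most $0$ — and assuming $f\in W^{s_{d+1}}(\HH)$ so that the asserted bound is not vacuous, I form the splitting $f=f_1+f_{\bmu}$ of~\eqref{dfoldf}.

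I would then invoke the preparatory lemmas. Lemma~\ref{sobolevnorms} gives $\norm{f_1}_{\sigma},\norm{f_{\bmu}}_{\sigma}\leq C_{\nu_0,\sigma}\,\norm{f}_{2\sigma+d}$ for every $\sigma>0$, with constant uniform over all irreducible representations carrying the relevant spectral gap. Since $s_{d+1}>s_d\geq s_1=s$, Lemma~\ref{breg} puts every projection $(f_1\mid_{\bl})$ into $W^{s}(\HH_1)$ and every projection $(f_{\bmu}\mid_{k})$ into $W^{s_d}(\HH_{\bmu})$; and Lemma~\ref{bobs} — the point at which the standing hypothesis $f\in\ker\II_{X_1,\dots,X_{d+1}}$ is used — guarantees that $(f_1\mid_{\bl})\in\ker\II^{s}_{X_1}(\HH_1)$ for every $\bl\in J_{\bnu}$ and $(f_{\bmu}\mid_{k})\in\ker\II^{s}_{X_2,\dots,X_{d+1}}(\HH_{\bmu})$ for every $k\in J_{\nu_1}$.

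The strategy is to solve $f_1=X_1 g_1$ and $f_{\bmu}=X_2 g_2+\dots+X_{d+1}g_{d+1}$ separately and add. Since $X_1$ acts only on the first tensor factor, it commutes with the projections $(\,\cdot\mid_{\bl})$ to the $\HH_1$ variable, so $X_1 g_1=f_1$ is equivalent to the family $X_1 h=(f_1\mid_{\bl})$ in $\HH_1$, one equation per $\bl$; as $\mu_1>\mu_0$ and $(f_1\mid_{\bl})\in\ker\II^{s}_{X_1}(\HH_1)\cap W^{s}(\HH_1)$, Theorem~\ref{m} produces a solution in $W^{t}(\HH_1)$ for any $t<s-1$, with constant depending only on $\mu_0,s,t$ and hence uniform in $\bl$. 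Symmetrically, $X_2,\dots,X_{d+1}$ act only on the factor $\HH_{\bmu}$ and commute with the projections $(\,\cdot\mid_{k})$, so $\sum_{i\geq2}X_i g_i=f_{\bmu}$ is equivalent to the family, over $k\in J_{\nu_1}$, of degree-$d$ coboundary equations on $\HH_{\bmu}$ with right-hand side $(f_{\bmu}\mid_{k})$; the induction hypothesis applies to each of these (the right-hand side lies in $\ker\II^{s}_{X_2,\dots,X_{d+1}}(\HH_{\bmu})\cap W^{s_d}(\HH_{\bmu})$ and the $d$ Casimir parameters all exceed $\mu_0$), yielding solutions in $W^{t}(\HH_{\bmu})$, $t<s-1$, uniformly in $k$. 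Reassembling the slice solutions gives $g_1,\dots,g_{d+1}$ with $X_1 g_1=f_1$ and $\sum_{i\geq2}X_i g_i=f_{\bmu}$, hence $X_1 g_1+\dots+X_{d+1}g_{d+1}=f$.

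What remains, and what the choice $s_{d+1}=2s_d+2s+d$ is designed to absorb, is the Sobolev bookkeeping. We may assume $t\geq0$, the case $t<0$ being covered by $t=0<s-1$. Repeatedly using $(1+A+B)\leq(1+A)(1+B)$ and $(1+A)^{m}(1+B)^{n}\leq(1+A+B)^{m+n}$ (valid for $A,B\geq0$ and real $m,n\geq0$), the Sobolev-$t$ norm of a reassembled vector is bounded by a weighted sum of the norms of its slice solutions over the decoupling frequencies; inserting the uniform slice estimates gives $\norm{g_1}_{t}\leq C\,\norm{f_1}_{t+s}$ and $\norm{g_i}_{t}\leq C\,\norm{f_{\bmu}}_{t+s_d}$ for $i\geq2$. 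Composing with Lemma~\ref{sobolevnorms} yields $\norm{g_1}_{t}\leq C\,\norm{f}_{2(t+s)+d}$ and $\norm{g_i}_{t}\leq C\,\norm{f}_{2(t+s_d)+d}$; since $t<s-1$ and $s_d\geq s$, neither Sobolev order on the right exceeds $s_{d+1}$, so $\norm{g_i}_{t}\leq C_{\mu_0,s,t}\,\norm{f}_{s_{d+1}}$ for every $i$, which closes the induction. The one genuine obstacle is keeping track of this regularity loss, which roughly doubles and picks up an additive $O(d)$ term at each step; there is no analytic difficulty beyond what Theorem~\ref{m} and Lemmas~\ref{sobolevnorms}, \ref{breg}, and~\ref{bobs} already package.
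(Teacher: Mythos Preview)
Your proof is correct and follows essentially the same route as the paper: induction on $d$ with Theorem~\ref{m} as base case, the splitting $f=f_1+f_{\bmu}$ via~\eqref{dfoldf}, the invocation of Lemmas~\ref{sobolevnorms}, \ref{breg}, and~\ref{bobs} to put the slices in the right kernels, slicewise solution and reassembly, and the same recursion $s_{d+1}=2(s_d+s)+d$ for the Sobolev loss. The only cosmetic differences are that you track $\norm{f_1}_{t+s}$ and $\norm{f_{\bmu}}_{t+s_d}$ where the paper bounds more crudely by $\norm{f_1}_{2s}$ and $\norm{f_{\bmu}}_{s+s_d}$ (both fit under $s_{d+1}$), and that you make explicit the reduction to $t\geq 0$ and the standing assumption $f\in W^{s_{d+1}}$, which the paper leaves to its post-statement remark.
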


\begin{remark*}
The Sobolev norm bound in Theorem~\ref{irreda} only gives non-trivial information when $f \in W^{s_d}(\HH)$.  
\end{remark*}

\begin{proof}
The theorem is known to hold for $d=1$, by Theorem \ref{m}.  Assume the theorem holds for $d$-fold products.  Our proof consists of showing that this implies the theorem for $(d+1)$-fold products.  We keep the notations used in Section \ref{dfold}.

We are given $f \in \ker \II_{X_1,\dots,X_{d+1}}^{s}(\HH_{1}\otimes\HH_{\bmu})$, with $s > 1$.  We define $f_{1}$ and $f_{\bmu}$ as in \eqref{dfoldf}.  By Lemmas \ref{breg} and \ref{bobs}, we have that $(f_{1}\mid_{\bl}) \in \ker \II_{X_1}^{s}(\HH_{1})$ for all ${\bl} \in J_{\bnu}$, and $(f_{\bmu}\mid_{k}) \in \ker\II_{X_2,\dots,X_{d+1}}^{s}(\HH_{\bmu})$ for all $k \in J_{\nu_1}$.

By the inductive hypothesis, for every ${\bl} \in J_{\bnu}$, there exists $g_{\bl} \in W^{t}(\HH_{1})$ satisfying $X_{1}\,g_{\bl} = (f_{1}\mid_{\bl})$, where $t<s-1$.  Similarly, for every $k \in J_{\nu_1}$, there exists $h_{2,k},\dots,h_{d+1,k} \in W^{t}(\HH_{\bmu})$ satisfying 
\[
	X_{2}\,h_{2,k} + \dots + X_{d+1}\,h_{d+1,k} = (f_{\bmu}\mid_{k}),
\] 
where $t<s-1$.  Also, we have the estimates
\begin{align*}
	\left\| g_{\bl} \right\|_{t} &\leq C_{\mu_0,s,t}\,\left\| (f_{1}\mid_{\bl}) \right\|_{s_{1}=s} \\
		&\mathrm{and} \\
	\left\| h_{i,k} \right\|_{t} &\leq C_{\mu_0,s,t}\,\left\| (f_{\bmu}\mid_{k}) \right\|_{s_d}
\end{align*}
for all $i = 2, \dots, d+1$, where $0<\mu_0< \min\{\mu_1,\dots, \mu_d\}$, and $C_{\mu_0, s, t}>0$ is the constant from Theorem \ref{m}.

Setting
\begin{align*}
	g_{1}(k,{\bl}) &= \frac{g_{\bl}(k)}{\left\| v_{\bl} \right\|} \\
	&\mathrm{and}\\
	g_{i}(k,{\bl}) &= \frac{h_{i,k}({\bl})}{\left\| u_{k} \right\|}
\end{align*}
for $i = 2, \dots, d+1$, we have that
\begin{align*}
	\sum_{i=1}^{d+1} X_i\,g_{i} &= \sum_{i=1}^{d+1} X_i \,\sum_{(k,{\bl}) \in J_{\nu_1} \times J_{\bnu}} g_{i}(k,{\bl})\,u_{k}\otimes v_{\bl} \\
		&= \sum_{i=1}^{d+1} \sum_{(k,{\bl}) \in J_{\nu_1} \times J_{\bnu}} g_{i}(k,{\bl})\,(X_i\,u_{k}\otimes v_{\bl}) \\
		&= \sum_{{\bl} \in J_{\bnu}}\,\left\| v_{\bl} \right\|^{-1}\,(X_1\, g_{\bl})\otimes v_{\bl} + \sum_{i=2}^{d+1}\sum_{k \in J_{\nu_1}}\left\| u_{k} \right\|^{-1}\, u_{k}\otimes(X_i \, h_{i,k}) \\
		&= \sum_{{\bl} \in J_{\bnu}}\left\| v_{\bl} \right\|^{-1}\,(f_{1}\mid_{\bl})\otimes v_{\bl} + \sum_{i=2}^{d+1}\sum_{k \in J_{\nu_1}} \left\| u_{k} \right\|^{-1}\,u_{k}\otimes(f_{\bmu}\mid_{k}) \\
		&= f,
\end{align*} 
and so $g_1, \dots, g_{d+1}$ constitute a formal solution to the coboundary equation.  To see that it is a \emph{bona fide} solution, we just check the Sobolev norms, 
\begin{align*}
	\left\| g_1 \right\|_{t}^{2} &= \sum_{(k,{\bl}) \in J_{\nu_1} \times J_{\bnu}} (1+\mu_1 + \left\|\bmu\right\| + 2(k+ \frac{\e}{2})^2+2\,|{\bl} + \frac{\bep}{2}|^2)^t\,|g_{1}(k,{\bl})|^2\,\left\| u_{k}\otimes v_{\bl} \right\|^2 \\
		&\leq \sum_{{\bl} \in J_{\bnu}}\,\sum_{k \in J_{\nu_1}} (1+\left\|\bmu\right\| + 2\,|{\bl}+ \frac{\bep}{2}|^2)^t\,(1+\mu_1 + 2(k+ \frac{\e}{2})^2)^t\,|g_{1}(k,{\bl})|^2\,\left\| u_{k}\right\|^2 \,\left\| v_{\bl} \right\|^2 \\
		&= \sum_{{\bl} \in J_{\bnu}} (1+\left\|\bmu\right\| + 2\,|{\bl}+ \frac{\bep}{2}|^2)^t\,\left\|g_{\bl} \right\|_{t}^{2} \\
		&\leq C_{\mu_0, s,t}^2\,\sum_{{\bl} \in J_{\bnu}} (1+\left\|\bmu\right\| + 2\,|{\bl}+ \frac{\bep}{2}|^2)^{s}\,\left\| (f_{1}\mid_{\bl}) \right\|_{s}^{2} \\
		&\leq C_{\mu_0,s,t}^2\,\left\| f_1 \right\|_{2s}^2 \leq C_{\mu_0,s,t}^2\,\norm{f}_{4s+d}^2 \leq C_{\mu_0,s,t}^2\,\norm{f}_{s_{d+1}}^2,
\end{align*}
where in the last line we have absorbed the constant $C_{\nu_0, s}$ from Lemma~\ref{sobolevnorms} into $C_{\mu_0,s,t}$ and also observed that $4s+d \leq s_{d+1}$.  For $i=2,\dots, d+1$,
\begin{align*}
	\left\| g_i \right\|_{t}^{2} &= \sum_{(k,{\bl}) \in J_{\nu_1} \times J_{\bnu}} (1+\mu_1 + \left\|\bmu\right\| + 2(k+ \frac{\e}{2})^2+2\,|{\bl}+ \frac{\bep}{2}|^2)^t\,|g_{i}(k,{\bl})|^2\,\left\| u_{k}\otimes v_{\bl} \right\|^2 \\
		&\leq \sum_{k \in J_{\nu_1}}\,\sum_{{\bl} \in J_{\bnu}} (1+\left\|\bmu\right\| + 2\,|{\bl}+ \frac{\bep}{2}|^2)^t\,(1+\mu_1 + 2(k+ \frac{\e}{2})^2)^t\,|g_{i}(k,{\bl})|^2\,\left\| v_{\bl} \right\|^2 \, \left\| u_{k}\right\|^2 \\
		&= \sum_{k \in J_{\nu_1}} (1+\mu_1 + 2(k+ \frac{\e}{2})^2)^t\,\left\|h_{i,k} \right\|_{t}^{2} \\
		&\leq C_{\mu_0,s,t}^2\,\sum_{k \in J_{\nu_1}} (1+\mu_1 + 2(k+ \frac{\e}{2})^2)^{s}\,\left\| (f_{\bmu}\mid_{k}) \right\|_{s_d}^{2} \\
		&\leq C_{\mu_0,s,t}^2\,\left\| f \right\|_{2(s_{d}+s)+d}^2 = C_{\mu_0,s,t}^2\,\left\| f \right\|_{s_{d+1}}^2,
\end{align*}
where we have again used Lemma~\ref{sobolevnorms} and absorbed its constant into $C_{\mu_0,s,t}$, and noted that the recursion $s_{d+1} = 2(s_d + s)+d$ starting with $s_1 = s$ results in the formula
\[
	s_{d+1}=2^{d}s + \sum_{i=0}^{d-1} 2^i(2s + d-i).
\]
This proves Theorem \ref{a} in the irreducible case.
\end{proof}


\subsection{Proof of Theorem \ref{a}}

\begin{proof}
We have a unitary representation of $\SL(2,\RR)^d$ on $\HH$ with spectral gap, as in the theorem statement.  Let $s,t, \mu_0$ be as in the theorem statement.  Consider the direct integral decomposition
\[
	\HH = \int_{\oplus}\HH_{\l}\,ds(\l)
\]
where $ds$-almost all $\HH_{\l}$ are irreducible.  Also, for all $s \in \RR$,
\[
	W^{s}(\HH) = \int_{\oplus} W^{s}(\HH_{\l})\,ds(\l).
\]
Any $f \in \ker\II_{X_1,\dots,X_d}^{s_d}(\HH)$ decomposes as
\[
	f = \int_{\oplus} f_{\l}\,ds(\l),
\]
where $f_{\l} \in W^{s_d}(\HH_{\l})$.  Since invariant distributions also decompose
\[
	\II_{X_1,\dots,X_d}^{s_d}(\HH) = \int_{\oplus} \II_{X_1,\dots,X_d}^{s_d}(\HH_{\l})\,ds(\l),
\]
we have that, for $ds$-almost every $\l$, 
\[
f_{\l} \in \ker\II_{X_1,\dots,X_d}^{s_d}(\HH_{\l}) \subset \ker\II_{X_1,\dots,X_d}^{s}(\HH_{\l}),
\]
and so by Theorem \ref{irreda}, there are $g_{1,\bmu},\dots,g_{d,\bmu} \in W^{t}(\HH_{\bmu})$ satisfying the coboundary equation, and the estimate
\[
	\left\| g_{i,\bmu}\right\|_{t} \leq C_{\mu_0,s,t} \left\| f_{\bmu}\right\|_{s_d}
\]
in $ds$-almost every irreducible $\HH_{\bmu}$ appearing in the decomposition.  Set
\begin{align*}
	g_{i} &:= \int_{\oplus} g_{i, \l}\,ds(\l),
\end{align*}
where $g_{i,\l} = g_{i,\bmu}$ for all $\l$ where $f_\l \in \ker\II_{X_1,\dots,X_d}^{s_d}(\HH_{\l})$ and $\HH_{\l} = \HH_{\bmu}$ is irreducible, and $g_{i,\l} = 0$ otherwise.   Then
\begin{align*}
	\left\| g_{i}\right\|_{t}^{2} &= \int_{\oplus} \left\|g_{i, \l}\right\|_{t}^2 \,ds(\l)\\
		&\leq C_{\mu_0,s,t}^{2}\int_{\oplus} \left\|f_{\l}\right\|_{s_d}^{2}\,ds(\l) \\
		&= C_{\mu_0,s,t}^{2} \left\|f \right\|_{s_d}^{2}.
\end{align*}
The vectors $g_1,\dots,g_d$ constitute a solution to the coboundary equation because the operators $X_1,\dots,X_d$ are decomposable with respect to the direct integral decomposition.  This completes the proof of Theorem \ref{a}.
\end{proof}


\section{Intermediate cohomology}

Continuing with our usual notation, let $\HH_1 \otimes\dots\otimes\HH_d$ be the Hilbert space of an irreducible unitary representation of the group $\SL(2,\RR)^d$, with all factors non-trivial.  We define an \emph{$n$-form (of Sobolev order at least $\t$) over the $\RR^d$-action on $\HH_{1}\otimes\dots\otimes\HH_d$} to be a map 
\[
\o:(\mathrm{Lie}(\RR^{d}))^n \rightarrow W^{\t}(\HH_{1}\otimes\dots\otimes\HH_d)
\] 
which is linear and anti-symmetric.  There is an exterior derivative, given by the formula
\begin{align*}
	\di\o(V_{1},\dots,V_{n+1}) &:= \sum_{j=1}^{n+1} (-1)^{j+1}\,V_{j}\,\o(V_1,\dots,\widehat{V_{j}},\dots, V_{n+1}),
\end{align*}  
where ``$\quad \widehat{}\quad$'' denotes omission.  One sees that $\di\o$ is an $(n+1)$-form taking values in a lower Sobolev space $W^{\t-1}(\HH_1 \otimes\dots\otimes\HH_d)$.

One can see $\o$ as an element $\o \in W^{\t}(\HH_{1}\otimes\dots\otimes\HH_d)^{\binom{d}{n}}$, indexed by $n$-tuples from the set $\left\{X_1, \dots, X_d\right\}$.  The form $\o$ is said to be \emph{closed}, and is called a \emph{cocycle}, if $\di\o=0$, or
\begin{align*}
	\di\o(X_{I}) &:= \sum_{j=1}^{n+1} (-1)^{j+1}\,X_{i_{j}}\,\o(X_{I_{j}}) = 0,
\end{align*}  
where $I:=(i_{1},\dots,i_{n+1})$ with $i_j \in \left\{1,\dots,d \right\}$ is the multi-index, and
\[
I_{j}:=(i_1,\dots, \widehat{i_{j}}, \dots, i_n).
\]
It is \emph{exact}, and is called a \emph{coboundary}, if there is an $(n-1)$-form $\eta$ satisfying $\di\eta=\o$.  Two forms that differ by a coboundary are said to be \emph{cohomologous}.  We denote the space of $n$-forms over the $\RR^d$-action on $\HH_1 \otimes\dots\otimes\HH_d$ by $\O_{\RR^d}^{n}(W^{\t}(\HH_{1}\otimes\dots\otimes\HH_d))$.  (These mirror the usual definitions from de Rham cohomology.)

Notice that if $n=d$, then $\o$ is given by just one element 
\[
	\o(X_1,\dots,X_d) = f \in W^{\t}(\HH_1 \otimes\dots\otimes\HH_d); 
\]
it is automatically closed, and exactness is characterized by the existence of a $(d-1)$-form $\eta$ satisfying $\di\eta=\o$.  Or, setting $\eta(X_{I_j}) = (-1)^{j+1}g_{j}$,
\begin{align*}
	\di\eta(X_1,\dots,X_d) &= \sum_{j=1}^{d} (-1)^{j+1}\,X_{j}\,\eta(X_{I_j}) \\
			&= \sum_{j=1}^{d} X_{j}\,g_j \\
			&= f.
\end{align*}
This is exactly the top-degree coboundary equation \eqref{cobeq} from the first part of this paper. 

It will be useful to define restricted versions of forms.  For an $n$-form $\o \in \O_{\RR^d}^{n}(W^{\t}(\HH_{1}\otimes\dots\otimes\HH_d))$, define $\o_1 \in \O_{\RR^{d}}^{n}(W^{\t}(\HH_{1}\otimes\dots\otimes\HH_d))$ to be indexed by $2\leq i_1 < \dots < i_n \leq d$,
\[
	\o_{1}(X_{i_1},\dots,X_{i_n}) = \o(X_{i_1},\dots,X_{i_n}).
\]
This is just the form $\o$, with the index $1$ ``missing.''  Fixing a basis element $u_{k} \in \HH_1$, we can define a restricted version $(\o_1 \mid_{k}) \in\O_{\RR^{d-1}}^{n}(W^{\t}(\HH_{2}\otimes\dots\otimes\HH_d))$ by
\[
	(\o_{1}\mid_{k})(X_{i_1},\dots,X_{i_n}) = (\o(X_{i_1},\dots,X_{i_n}))\mid_{k}.
\]
This is an $n$-form over the $\RR^{d-1}$-action by $X_2,\dots,X_d$ on $\HH_2 \otimes\dots\otimes\HH_d$.  We prove the following lemma, which shows that if $\o$ is a closed form, then so are $\o_1$ and $(\o_{1}\mid_{k})$.

\begin{lemma} \label{closed}
	Let $\o \in \O_{\RR^d}^{n}(W^{\t}(\HH_{1}\otimes\dots\otimes\HH_{d}))$, with $\di\o=0$.  Then $\di\o_1=0$ and $\di(\o_{1}\mid_{k})=0$ for all $k \in J_{\nu_1}$.
\end{lemma}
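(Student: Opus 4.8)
The plan is to verify the two claims $\di\o_1 = 0$ and $\di(\o_1\mid_k) = 0$ directly from the formula for the exterior derivative, exploiting the fact that $\o_1$ and $(\o_1\mid_k)$ are ``restrictions'' of $\o$ obtained either by dropping all multi-indices containing $1$, or by additionally freezing the first tensor factor at a basis vector $u_k$.

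First I would handle $\di\o_1 = 0$. Fix a multi-index $2 \leq i_1 < \dots < i_{n+1} \leq d$ and compute
\[
	\di\o_1(X_{i_1},\dots,X_{i_{n+1}}) = \sum_{j=1}^{n+1}(-1)^{j+1}\,X_{i_j}\,\o_1(X_{i_1},\dots,\widehat{X_{i_j}},\dots,X_{i_{n+1}}).
\]
Since every multi-index appearing on the right omits the index $1$, each term $\o_1(X_{i_1},\dots,\widehat{X_{i_j}},\dots,X_{i_{n+1}})$ equals $\o(X_{i_1},\dots,\widehat{X_{i_j}},\dots,X_{i_{n+1}})$ by definition of $\o_1$. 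Hence the whole sum is exactly $\di\o(X_{i_1},\dots,X_{i_{n+1}})$, which vanishes because $\o$ is closed. So $\di\o_1 = 0$.

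Next I would handle $\di(\o_1\mid_k) = 0$. Here the acting group is the $\RR^{d-1}$ generated by $X_2,\dots,X_d$, and the exterior derivative is computed with those vector fields. Fix $2 \leq i_1 < \dots < i_{n+1} \leq d$ and write
\[
	\di(\o_1\mid_k)(X_{i_1},\dots,X_{i_{n+1}}) = \sum_{j=1}^{n+1}(-1)^{j+1}\,X_{i_j}\,\bigl((\o_1\mid_k)(X_{i_1},\dots,\widehat{X_{i_j}},\dots,X_{i_{n+1}})\bigr).
\]
By definition $(\o_1\mid_k)(X_{i_1},\dots,\widehat{X_{i_j}},\dots,X_{i_{n+1}}) = \bigl(\o(X_{i_1},\dots,\widehat{X_{i_j}},\dots,X_{i_{n+1}})\bigr)\mid_k$. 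The key point I would verify is that the projection $(\cdot)\mid_k$ along the first factor commutes with the operators $X_{i_j}$ for $i_j \geq 2$: since $X_{i_j}$ acts only on the $i_j$-th tensor factor (with $i_j \neq 1$) while $(\cdot)\mid_k$ only reads off the coefficient of $u_k$ in the first factor, we have $X_{i_j}\bigl(h\mid_k\bigr) = (X_{i_j}h)\mid_k$ for every $h$. Applying this to each term, the sum becomes $\bigl(\di\o_1(X_{i_1},\dots,X_{i_{n+1}})\bigr)\mid_k$, which is $0$ by the first part (or directly $\bigl(\di\o(X_{i_1},\dots,X_{i_{n+1}})\bigr)\mid_k = 0$).

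The only step requiring genuine care — and the one I would flag as the main (small) obstacle — is justifying the commutation $X_{i_j}\bigl(h\mid_k\bigr) = (X_{i_j}h)\mid_k$ rigorously at the level of Sobolev-space-valued forms, i.e.\ checking it termwise on the orthogonal basis $\{u_{k_1}\otimes v_{\bl}\}$ using Lemma~\ref{action} and then passing to the (convergent) sums; one should note that applying $X_{i_j}$ lowers Sobolev order by one, so all the identities live in $W^{\t-1}$, which is consistent with $\di\o$ being an $(n+1)$-form of order $\t-1$. Everything else is bookkeeping with the definition of the exterior derivative.
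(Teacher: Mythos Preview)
Your proof is correct and follows exactly the same approach as the paper's own proof, which carries out the identical computation for $\di\o_1$ and then simply remarks that the calculation for $\di(\o_1\mid_k)$ is ``equally straight-forward.'' In fact you have supplied more detail than the paper by explicitly isolating the commutation $X_{i_j}(h\mid_k) = (X_{i_j}h)\mid_k$ for $i_j \geq 2$, which is precisely the content hidden in the paper's one-line dismissal.
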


\begin{proof}
These are calculations.  First, for $i_1 \geq 2$, 
\begin{align*}
	\di\o_1 (X_{i_1},\dots,X_{i_{n+1}}) &= \sum_{j=1}^{n+1}(-1)^{j+1}\,X_{i_j}\,\o_{1}(X_{i_1},\dots,\widehat{X_{i_j}},\dots,X_{i_{n+1}}) \\
		&= \sum_{j=1}^{n+1}(-1)^{j+1}\,X_{i_j}\,\o(X_{i_1},\dots,\widehat{X_{i_j}},\dots,X_{i_{n+1}}) \\
		&= \di\o(X_{i_1},\dots,X_{i_{n+1}}) \\
		&= 0.
\end{align*}
The calculation for $\di(\o_{1}\mid_{k})$ is equally straight-forward.
\end{proof}

Closed $(d-1)$-forms over the $\RR^d$-action on $\HH_1 \otimes\dots\otimes\HH_d$ are of special interest here.  The following proposition shows that for $\o \in \O_{\RR^d}^{d-1}(W^{\t}(\HH_1 \otimes\dots\otimes\HH_d))$ with $\di\o=0$, the top-degree cocycle 
\[
(\o_{1}\mid_{k}) \in \O_{\RR^{d-1}}^{d-1}(W^{\t}(\HH_2 \otimes\dots\otimes\HH_{d}))
\]
is in the kernel of $X_2,\dots,X_d$-invariant distributions, and hence is exact, for every $k\in J_{\nu_1}$, by Theorem \ref{a}.

\begin{proposition} \label{prop}
	Let $\o \in \O_{\RR^{d}}^{d-1}(W^{\t}(\HH_{1}\otimes\dots\otimes\HH_d))$ be a closed $(d-1)$-form.  Then for every $k \in J_{\nu_1}$, we have that 
	\[
	(\o_{1}\mid_{k})(X_2,\dots,X_d) \in \ker\II_{X_2,\dots,X_{d}}^{\t}.
	\]
\end{proposition}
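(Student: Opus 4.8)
Fix $\D \in \II_{X_2,\dots,X_d}^{\t}(\HH_{\bmu})$ with $\HH_{\bmu}:=\HH_{2}\otimes\dots\otimes\HH_{d}$; if $\II_{X_2,\dots,X_d}^{\t}(\HH_{\bmu})=\{0\}$ there is nothing to prove, and otherwise $\t>0$, since a non-trivial irreducible $\SL(2,\RR)^{d-1}$-module has no non-zero invariant vector. Write $\o_1:=\o(X_2,\dots,X_d)\in W^{\t}(\HH)$, so that the single value $(\o_1\mid_k)(X_2,\dots,X_d)$ of the top-degree form $(\o_1\mid_k)$ is just the vector obtained by restricting $\o_1$ along $u_k\in\HH_1$; and for $2\le j\le d$ put $\o_j:=\o(X_1,\dots,\widehat{X_j},\dots,X_d)\in W^{\t}(\HH)$. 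Expanding $\o_1=\sum_{k,\bl}\o_1(k,\bl)\,u_k\otimes v_{\bl}$, one has $(\o_1\mid_k)=\sum_{\bl}\o_1(k,\bl)\,\norm{u_k}\,v_{\bl}\in\HH_{\bmu}$. Setting $g(k):=\D\bigl((\o_1\mid_k)\bigr)$ for $k\in J_{\nu_1}$, it suffices to prove $g\equiv 0$.

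The argument uses two facts about $(g(k))_{k\in J_{\nu_1}}$. First, $g\in\ell^{2}(J_{\nu_1})$: since $\D\in W^{-\t}(\HH_{\bmu})$ and, for $\t\ge0$, the inequality $(1+\norm{\bmu}+2\abs{\bl+\tfrac{\bep}{2}}^{2})^{\t}\le(1+\mu_1+\norm{\bmu}+2(k+\tfrac{\e_1}{2})^{2}+2\abs{\bl+\tfrac{\bep}{2}}^{2})^{\t}$ yields $\sum_{k}\norm{(\o_1\mid_k)}_{\t}^{2}\le\norm{\o_1}_{\t}^{2}$ (exactly the Sobolev bookkeeping of Section~\ref{repprods}), we get $\sum_{k}\abs{g(k)}^{2}\le\norm{\D}_{-\t}^{2}\,\norm{\o_1}_{\t}^{2}<\infty$. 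Second, $g$ satisfies a recursion forced by $\di\o=0$. Evaluating the cocycle identity on $(X_1,\dots,X_d)$ gives, up to signs, $X_1\o_1=\sum_{j=2}^{d}(-1)^{j}\,X_j\o_j$ in $W^{\t-1}(\HH)$. Restricting along $u_k$ and using that this restriction commutes with each $X_j$ for $j\ge2$ (which act only on the $\HH_{\bmu}$-factor), while the restriction of $X_1\o_1$ is governed by the three-term formula of Lemma~\ref{action} --- so $(X_1\o_1\mid_k)=\norm{u_k}\bigl(\tfrac{b^{+}(k-1)}{\norm{u_{k-1}}}(\o_1\mid_{k-1})-\tfrac{b^{-}(k+1)}{\norm{u_{k+1}}}(\o_1\mid_{k+1})\bigr)$ --- one obtains $(X_1\o_1\mid_k)=\sum_{j=2}^{d}(-1)^{j}X_j\bigl((\o_j\mid_k)\bigr)$. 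Applying $\D$, which kills the image of every $X_j$, $j=2,\dots,d$, gives $\D\bigl((X_1\o_1\mid_k)\bigr)=0$, i.e.
\[
	b^{+}(k-1)\,S(k-1)=b^{-}(k+1)\,S(k+1),\qquad S(k):=g(k)/\norm{u_k},
\]
for all $k\in J_{\nu_1}$ (out-of-range terms being $0$). This two-term relation --- a first-order recursion on each of the two parity classes of $J_{\nu_1}$ --- is the manifestation of the higher-rank trick here: an $X_2,\dots,X_d$-invariant distribution imposes no constraint along the first factor beyond it.

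The proof is completed by showing that the only $\ell^{2}$ solution of this recursion is $0$. At the bottom $k=n$ of a discrete series for $\HH_1$ (where $k-1\notin J_{\nu_1}$) the relation forces $g(n+1)=0$, and propagating it shows $g$ vanishes on the parity class of $n+1$; on the other class --- and on both classes in the principal and complementary cases --- $g$ is either identically $0$ or nowhere $0$. In the nowhere-zero case, combining the explicit $b^{\pm}$ with the norm asymptotics $\norm{u_k}^{2}=\abs{\Pi_{\nu_1,\e_1,\abs{k}}}$ from Lemma~\ref{fflemma} gives, uniformly over all series for $\HH_1$,
\[
	\frac{\abs{g(k+1)}^{2}}{\abs{g(k-1)}^{2}}=\frac{(2k+\e_1-1)^{2}-\nu_1^{2}}{(2k+\e_1+1)^{2}-\nu_1^{2}},
\]
and iterating this along a parity class yields $\abs{g(k)}^{2}\asymp C\,\abs{k}^{-1}$ as $\abs{k}\to\infty$ with $C>0$, which contradicts $g\in\ell^{2}$. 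Hence $g\equiv0$, so $\D\bigl((\o_1\mid_k)(X_2,\dots,X_d)\bigr)=0$ for every $k$; as $\D$ was arbitrary, $(\o_1\mid_k)(X_2,\dots,X_d)\in\ker\II_{X_2,\dots,X_d}^{\t}$. I expect this last step to be the main obstacle: the asymptotic estimate must be checked uniformly across the first- and second-principal, complementary, and both discrete series for $\HH_1$, and one must treat the degenerate bottom of a discrete series, where one parity class is killed outright while the other still displays the borderline $\abs{k}^{-1/2}$ decay of $\abs{g(k)}$ that only just fails to be square-summable.
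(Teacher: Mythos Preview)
Your proof is correct and follows essentially the same route as the paper: both show that the sequence $k\mapsto \D((\o_1\mid_k))$ assembles into an $X_1$-invariant element of $\HH_1$---the paper packages this via the map $\bar\D:W^\tau(\HH)\to W^\tau(\HH_1)$, $u_k\otimes v_{\bl}\mapsto\D(v_{\bl})\,u_k$, and the observation that $X_1\bar\D(f_1)=0$, which is exactly your recursion on $S(k)=g(k)/\norm{u_k}$---and then conclude it vanishes because $\HH_1$ is non-trivial. The only difference is that where the paper invokes in one line that a non-trivial irreducible $\SL(2,\RR)$-module has no non-zero $X$-invariant vector, you unpack this fact into the explicit $|g(k)|^2\asymp|k|^{-1}$ asymptotic across all series; this is correct but more work than needed.
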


\begin{proof}
	Setting
	\[
		f_j:= (-1)^{j+1}\,\o(X_1,\dots,\widehat{X_j},\dots,X_d),
	\] 
	we have that
	\[
		X_1\,f_1 + \dots + X_d\,f_d = 0,
	\]
	and $f_{1}\mid_{k} = (\o_{1}\mid_{k})(X_2,\dots,X_d) \in W^{\t}(\HH_2 \otimes\dots\otimes\HH_d)$.  
	
	Suppose $\D \in W^{-\t}(\HH_2 \otimes\dots\otimes\HH_d)$ is $X_2,\dots,X_d$-invariant.  Define the map
	\begin{align*}
		\bar{\D}: W^{\t}(\HH_1 \otimes\dots\otimes\HH_{d}) &\rightarrow W^{\t}(\HH_1) \\
			\mathrm{by} \quad u_{k}\otimes v_{\bl}: &\mapsto \D(v_{\bl})\cdot u_{k},
	\end{align*}
	and extending linearly.  Then $\bar\D$ is also $X_2,\dots,X_d$-invariant, in the sense that $\bar\D(X_i\,h) = 0$ for all $h \in W^{\t}(\HH_1 \otimes\dots\otimes\HH_d)$ and $i=2,\dots,d$.  Therefore,
	\begin{align*}
		\bar{\D}(X_1\,f_1) &= -\bar{\D}(X_2\,f_2 + \dots +X_d\,f_d) \\
			&= 0,
	\end{align*}
	since $\o$ is closed.  But $\bar\D(X_1\,f_1) = X_1\,\bar\D(f_1) = 0$ implies that $\bar\D(f_1)=0$, since $\HH_1$ is not the trivial representation.
	
	Now,
	\begin{align*}
		\bar\D(f_1) &= \bar\D \left(\sum_{k \in J_{\nu_1}}\sum_{{\bl} \in J_{\bnu}} f_{1}(k,{\bl})\,u_{k}\otimes v_{\bl}\right) \\
			&= \sum_{k \in J_{\nu_1}}\left(\sum_{{\bl} \in J_{\bnu}} f_{1}(k,{\bl})\,\D(v_{\bl})\right)\,u_{k} \\
			&= 0
	\end{align*}
	implies that for each fixed $k \in J_{\nu_1}$,
	\begin{align*}
		\sum_{{\bl} \in J_{\bnu}} f_{1}(k,{\bl})\,\D(v_{\bl}) &= 0.
	\end{align*}
	Of course, then
	\[
		\D(f_{1}\mid_{k}) = \sum_{{\bl} \in J_{\bnu}} f_{1}(k,{\bl})\,\left\| u_{k} \right\|\,\D(v_{\bl}) = 0,
	\]
	proving the proposition.
\end{proof}


We are now prepared to state the proof of Theorem \ref{b} for irreducible unitary representations.


\subsection{Irreducible case}

\begin{theorem} \label{irredb}
Let $\HH_1 \otimes\dots\otimes \HH_d$ be the Hilbert space of an irreducible representation of $\SL(2,\RR)^d$ with no trivial factor.  Suppose $s > 1$ and  $1\leq n\leq d-1$.  Then, for $0 < \mu_0 < \min\left\{\mu_{1},\dots,\mu_d\right\}$ and any $t < s-1$, there is a constant $C_{\mu_0,s,t}>0$ such that for any $n$-cocycle $\o \in \O_{\RR^d}^{n} (W^{s}(\HH_1\otimes\dots\otimes\HH_d))$ there exists $\eta \in \O_{\RR^d}^{n-1} (W^{t}(\HH_1\otimes\dots\otimes\HH_d))$ with $\di\eta = \o$ and
\begin{align*}
	\left\|\eta(X_{i_1},\dots,X_{i_{n-1}})\right\|_{t} &\leq C_{\mu_0,s,t}\min\left\{\left\| \o(X_{j_1},\dots,X_{j_n}) \right\|_{s_d} \right\}
\end{align*}
for all multi-indices $1\leq i_1 < \dots < i_{n-1} \leq d$, where the minimum is taken over all multi-indices $1\leq j_1 < \dots < j_{n} \leq d$ that become $i_1, \dots, i_{n-1}$ after omission of one index.
\end{theorem}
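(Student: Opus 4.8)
I would induct on the number of factors $d$, taking Theorem~\ref{irredb} for $(d-1)$-fold products as the inductive hypothesis; the base case $d=2$ (where only $n=1$ occurs) is Mieczkowski's result in \cite{M1}, equivalently Katok--Spatzier's first-cohomology theorem for Anosov $\RR^2$-actions. The shape of the argument is the de~Rham-style reduction used by Katok and Katok for toral automorphisms: given an $n$-cocycle $\o$ on $\HH_1\otimes\dots\otimes\HH_d$ with $1\le n\le d-1$, first kill the part of $\o$ that does not involve the index $1$ by subtracting an exact form $\di\zeta$, with $\zeta$ having no index-$1$ components, and then dispose of the remainder $\o-\di\zeta$, which will be governed by an $(n-1)$-cocycle for the $\RR^{d-1}$-action of $X_2,\dots,X_d$.

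For the first step, form $\o_1$ (the components of $\o$ not involving $1$) and, for each $k\in J_{\nu_1}$, its restriction $(\o_1\mid_k)$, an $n$-form over the $X_2,\dots,X_d$-action on $\HH_2\otimes\dots\otimes\HH_d$; by Lemma~\ref{closed} each $(\o_1\mid_k)$ is closed. If $n\le d-2$ this is a cocycle of intermediate (or lower) degree for the $(d-1)$-fold product, so the inductive hypothesis supplies $\eta^{(k)}$ with $\di\eta^{(k)}=(\o_1\mid_k)$ and Sobolev estimates. If $n=d-1$, then $(\o_1\mid_k)$ is a top-degree cocycle for $\SL(2,\RR)^{d-1}$; here Proposition~\ref{prop}, applied to the closed $(d-1)$-form $\o$, says precisely that $(\o_1\mid_k)(X_2,\dots,X_d)$ lies in the kernel of every $X_2,\dots,X_d$-invariant distribution, so Theorem~\ref{irreda} (the irreducible form of Theorem~\ref{a}), applied to $\HH_2\otimes\dots\otimes\HH_d$, which inherits the spectral gap and has no trivial factor, again produces $\eta^{(k)}$ with $\di\eta^{(k)}=(\o_1\mid_k)$ and estimates. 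In either case, reassemble the $\eta^{(k)}$ into an $(n-1)$-form $\zeta$ over the $\RR^d$-action, with no index-$1$ components, by $\zeta(X_{i_1},\dots,X_{i_{n-1}})=\sum_k\norm{u_k}^{-1}\,u_k\otimes\eta^{(k)}(X_{i_1},\dots,X_{i_{n-1}})$ for $2\le i_1<\dots<i_{n-1}\le d$; since $X_2,\dots,X_d$ act only on the last $d-1$ factors, $\di\zeta(X_I)=\o(X_I)$ for every $I\subseteq\{2,\dots,d\}$ of size $n$.

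Set $\o':=\o-\di\zeta$. It is a closed $n$-cocycle with $\o'(X_I)=0$ whenever $1\notin I$, hence is determined by the family $f_{I'}:=\o'(X_1,X_{I'})$ indexed by $(n-1)$-subsets $I'\subseteq\{2,\dots,d\}$. Writing out $\di\o'=0$ on the $(n+1)$-tuples $\{1\}\cup I''$ and using that $\o'$ vanishes off index sets containing $1$ shows that $\{f_{I'}\}$ is an $(n-1)$-cocycle over the $X_2,\dots,X_d$-action on $\HH_1\otimes\dots\otimes\HH_d$ (this is where $\di\o=0$ is really used). If $n=1$, this ``cocycle'' is a single vector $f=\o'(X_1)$ with $X_jf=0$ for $j=2,\dots,d$; since a nontrivial irreducible unitary representation of $\SL(2,\RR)$ has no nonzero vector annihilated by $X$ — an $\ell^2$ estimate on the recursion defining $\ker X$, parallel to the bound on the Sobolev order of invariant distributions — the kernel of $X_2$ on $\HH_2\otimes\dots\otimes\HH_d$ is trivial, so $f=0$ and $\eta:=\zeta$ already satisfies $\di\eta=\o$. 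If $n\ge 2$, decompose $\HH_1\otimes\dots\otimes\HH_d$ along the basis $\{u_k\}$ of $\HH_1$ into copies of $\HH_2\otimes\dots\otimes\HH_d$; fiberwise $(\{f_{I'}\}\mid_k)$ is an $(n-1)$-cocycle for $\SL(2,\RR)^{d-1}$ of degree $n-1\le d-2$, so the inductive hypothesis yields $(n-2)$-forms $\theta^{(k)}$ over $\HH_2\otimes\dots\otimes\HH_d$ with $\di\theta^{(k)}=(\{f_{I'}\}\mid_k)$ up to sign, and estimates; reassembling these as above into an $(n-1)$-form $\xi$ over the $\RR^d$-action whose only nonzero components are the $\xi(X_1,X_J)$, one checks directly that $\di(\zeta+\xi)=\o$, so $\eta:=\zeta+\xi$ is the required primitive.

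The remaining work — and where I expect the real difficulty to be — is to propagate the Sobolev bounds through the two reassembly steps so as to obtain the estimate with the stated order $s_d$ and with the minimum over all multi-indices $J$ refining to $I$. Each use of the inductive hypothesis degrades $s$ to $s_{d-1}$; each reassembly costs the loss in the inequality $\sum_k(1+\norm{\bmu}+2\abs{\bk+\tfrac{\bep}{2}}^2)^{\t}\norm{(h\mid_k)}_{\s}^2\le\norm{h}_{\t+\s}^2$ from Section~\ref{repprods}; and the correction term $X_1\zeta$ hidden inside $f_{I'}$ costs one further derivative in the $\HH_1$-direction. Combined, these should reproduce exactly the recursion $s_{d+1}=2(s_d+s)+d$ from the proof of Theorem~\ref{irreda}, yielding the same closed form for $s_d$. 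Getting the estimate in the sharper form with the minimum — rather than merely for the particular components of $\o$ used in a given peeling — is the delicate point: it requires organizing the construction of $\zeta$ and $\xi$, and invoking the cocycle relations $\di\o=0$ to pass between a~priori different bounds, so that each $\eta(X_I)$ is controlled by whichever $\o(X_J)$ with $J\supseteq I$ is smallest. Finally, the reducible case follows from the irreducible one exactly as Theorem~\ref{a} follows from Theorem~\ref{irreda}, by direct-integral decomposition, the spectral gap guaranteeing uniform constants and no blow-up across the gluing.
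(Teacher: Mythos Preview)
Your approach is sound but genuinely different from the paper's. Both begin identically: form $\o_1$, restrict fiberwise to $(\o_1\mid_k)$, and solve there (by the inductive hypothesis when $n<d-1$, or by Proposition~\ref{prop} plus Theorem~\ref{irreda} when $n=d-1$), then reassemble into a partial primitive---your $\zeta$, the paper's $\eta_1$---satisfying $\di\zeta(X_I)=\o(X_I)$ for all $I\subset\{2,\dots,d\}$. The divergence is in what comes next. You pass to $\o'=\o-\di\zeta$, observe that it is supported on index sets containing $1$, extract the $(n-1)$-cocycle $\{f_{I'}\}$ over $X_2,\dots,X_d$, and primitivize \emph{that} (trivially when $n=1$ via $\ker X_j=0$, and by a second fiberwise application of the inductive hypothesis when $n\ge 2$), producing $\xi$ with $\di(\zeta+\xi)=\o$. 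The paper instead repeats the first step for every ``missing'' index $m$ to obtain $\eta_1,\dots,\eta_d$ and sets $\eta=\frac{1}{d-n+1}\sum_m\eta_m$, asserting $\di\eta=\o$.

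What each buys: your two-step reduction is the standard de~Rham/Koszul peeling and is visibly complete; the second primitivization is exactly what closes the argument. The paper's symmetrization is shorter and uses the inductive hypothesis only once per missing index, which makes the Sobolev bookkeeping toward $s_d$ cleaner (each $\eta_m(X_I)$ is controlled by a single $\o_m$-component, and averaging over $m\notin I$ is what is meant to produce the minimum). However, the paper's claim $\di\eta=\o$ does not follow from the stated property $\di\eta_m(X_I)=\o(X_I)$ for $m\notin I$ alone: on an index set $I$ one has $\di\eta(X_I)=\frac{1}{d-n+1}\bigl[(d-n)\,\o(X_I)+\sum_{m\in I}\di\eta_m(X_I)\bigr]$, and the cross terms $\di\eta_m(X_I)=\pm X_m\,\eta_m(X_{I\setminus\{m\}})$ for $m\in I$ are not shown to add up to $\o(X_I)$. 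Your second step is precisely the missing ingredient. Conversely, your route invokes the inductive hypothesis twice in series and introduces the derivative loss from $X_1\zeta$, so the regularity accounting is more delicate; you are right to flag this, and you should expect to have to take $t'$ large in the first step (using $\o\in W^{s_d}$) before feeding $\o'$ into the second, rather than running both steps at level $s$.
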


\begin{remark*}
As is the case with Theorem~\ref{irreda}, the bound on Sobolev norm in Theorem~\ref{irredb} only gives non-trivial information when $\o$ has Sobolev order at least $s_d$.  
\end{remark*}

\begin{proof}
By way of induction, suppose intermediate cohomology groups vanish for $\RR^p$-actions on $\HH_1\otimes\dots\otimes\HH_p$, where $2\leq p\leq d-1$.  The base case is the first cohomology over the $\RR^2$-action on $\HH_1 \otimes \HH_2$, known to vanish from \cite{M1}.

Let $\o_1$ be obtained as above.  Then for every $k\in J_{\nu_1}$, $(\o_{1}\mid_{k})$ is an $n$-form over the $\RR^{d-1}$-action on $\HH_2 \otimes\dots\otimes\HH_d$ generated by $X_2,\dots,X_d$.  By Lemma \ref{closed}, $(\o_{1}\mid_{k})$ is closed.

If $n < d-1$, the induction hypothesis implies that there exists an $(n-1)$-form $\eta_{1,k} \in \O_{\RR^{d-1}}^{n}(W^{t}(\HH_2 \otimes\dots\otimes\HH_d))$ with $\di\eta_{1,k} = (\o_{1}\mid_{k})$ and 
\[
\left\| \eta_{1,k}(X_{i_{1}},\dots,X_{i_{n-1}})\right\|_{t} \leq C_{\mu_{0},s,t}\,\min \left\{\left\| (\o_{1}\mid_{k})(X_{j_{1}},\dots,X_{j_{n}})\right\|_{s_{d-1}}\right\}
\]  
where the minimum is taken over multi-indices $2\leq j_{1} < \dots < j_{n}\leq d$ that become $i_1, \dots, i_{n-1}$ after omission of one index.

On the other hand, if $n = d-1$, then $(\o_{1}\mid_{k})$ is a top-degree form over this $\RR^{d-1}$-action, and Proposition \ref{prop} implies that 
\[
	(\o_{1}\mid_{k})(X_2,\dots,X_d) \in \ker\II_{X_2,\dots,X_d}^{s}
\]
which in turn implies, by Theorem \ref{irreda}, that there is an $(n-1)$-form $\eta_{1,k}$ with $\di\eta_{1,k} = (\o_{1}\mid_{k})$, and satisfying 
\begin{align*}
	\left\| \eta_{1,k}(X_{2},\dots,\widehat{X_{j}}, \dots,X_{d})\right\|_{t} &\leq C_{\mu_{0},s,t}\,\left\| (\o_{1}\mid_{k})(X_{2},\dots,X_{d})\right\|_{s_{d-1}}
\end{align*}  
for all $j = 2, \dots, d$.

Defining $\eta_1 \in \O_{\RR^d}^{n-1}(W^{t}(\HH_1 \otimes\dots\otimes\HH_{d}))$ so that $(\eta_{1}\mid_{k}) = \eta_{1,k}$ for all $k \in J_{\nu_1}$, we see that
\begin{align*}
	\di\eta_{1}(X_{i_1},\dots,X_{i_{n}}) &= \o_{1}(X_{i_1},\dots,X_{i_{n}})
\end{align*}
for $i_1 \geq 2$.  Also,
\begin{align*}
	&\left\| \eta_{1}(X_{i_1},\dots,X_{i_{n-1}}) \right\|_{t}^{2} \\
	&= \sum_{k \in J_{\nu_1}}\sum_{{\bl}\in J_{\bnu}} (1 + \mu_1 + \left\|\bmu\right\| + 2(k+ \frac{\e}{2})^2 + 2|{\bl}+ \frac{\bep}{2}|^2)^{t}\,|\eta_{1}(X_{i_1},\dots,X_{i_{n-1}})(k,{\bl})|^2\,\left\| u_k \otimes v_{\bl} \right\|^{2} \\
		&\leq \sum_{k \in J_{\nu_1}}\sum_{{\bl}\in J_{\bnu}} (1 + \mu_1 + 2(k+ \frac{\e}{2})^2)^{t}(1 + \left\|\bmu\right\| + 2|{\bl}+ \frac{\bep}{2}|^2)^{t}\,|\eta_{1}(X_{i_1},\dots,X_{i_{n-1}})(k,{\bl})|^2\,\left\| u_k \right\|^{2}\,\left\| v_{\bl} \right\|^{2} \\
		&= \sum_{k \in J_{\nu_1}}(1 + \mu_1 + 2(k+ \frac{\e}{2})^2)^{t} \left\| (\eta_{1}\mid_{k})(X_{i_1},\dots,X_{i_{n-1}}) \right\|_{t}^{2} \\
		&\leq C_{\mu_0,s,t}^{2}\sum_{k \in J_{\nu_1}}(1 + \mu_1 + 2(k+ \frac{\e}{2})^2)^{t} \min \left\{\left\| (\o_{1}\mid_{k})(X_{j_{1}},\dots,X_{j_{n}})\right\|_{s_{d-1}}^{2}\right\} \\
		&\leq C_{\mu_0,s,t}^{2}\min \left\{\left\| \o_{1}(X_{j_{1}},\dots,X_{j_{n}})\right\|_{s_d}^{2}\right\}.
\end{align*}

Repeating the above procedure, define $\eta_m \in \O_{\RR^d}^{n-1}(W^{t}(\HH_1 \otimes\dots\otimes\HH_{d}))$ such that 
\[
	\di\eta_{m}(X_{i_1},\dots,X_{i_{n}}) = \o_{m}(X_{i_1},\dots,X_{i_{n}})
\] 
for $m=1,\dots,d$, and the index $m \in \left\{1,\dots,d \right\}$ missing.  Exactly as above,
\begin{align*}
	\left\| \eta_{m}(X_{i_1},\dots,X_{i_{n-1}}) \right\|_{t}^{2} &\leq C_{\mu_0,s,t}^{2}\min \left\{\left\| \o_{m}(X_{j_{1}},\dots,X_{j_{n}})\right\|_{s_d}^{2}\right\},
\end{align*}
where the minimum is taken over all multi-indices $1\leq j_{1} < \dots < j_{n}\leq d$ (none equal to $m$) that become $i_1, \dots, i_{n-1}$ after omission of one index.

Setting 
\[
\eta := \frac{1}{d-n+1}(\eta_1 + \dots + \eta_d),
\]
 we have $\di\eta = \o$ and 
\begin{align*}
	\left\|\eta(X_{i_1},\dots,X_{i_{n-1}})\right\|_{t} &\leq C_{\mu_0,s,t}\min\left\{\left\| \o(X_{j_1},\dots,X_{j_n}) \right\|_{s_d}\right\}
\end{align*}
for all multi-indices $1\leq i_1 < \dots < i_{n-1} \leq d$, where the minimum is taken over all multi-indices $1\leq j_1 < \dots < j_{n} \leq d$ that become $i_1, \dots, i_{n-1}$ after omission of one index.  This proves the theorem.
\end{proof}



\subsection{Proof of Theorem \ref{b}}

\begin{proof}
Let $\HH, \mu_0, s, t$ be as in the theorem statement, and $1 \leq n \leq d-1$.  Let 
\[
	\o \in \O_{\RR^d}^{n}(W^{s_d}(\HH))\quad\textrm{with}\quad \di\o = 0.
\]
There is a direct decomposition
\[
	W^{s_d}(\HH) = \int_{\oplus} W^{s_d}(\HH_\l)\,ds(\l)
\]
where $ds$-almost every $\HH_\l$ is irreducible and without trivial factors (by assumption), and $\o$ decomposes
\[
	\o(X_{i_1}, \dots, X_{i_n}) = \int_{\oplus} \o_{\l}(X_{i_1}, \dots, X_{i_n})\,ds(\l)
\]
such that $ds$-almost every $\o_\l$ is a cocycle in $\O_{\RR^d}^{n}(W^{s_d}(\HH_{\l}))$, where $\HH_{\l} = \HH_{\bmu}$ is irreducible.

For these $\l$, Theorem \ref{irredb} supplies $\eta_\l:=\eta_{\bmu} \in \O_{\RR^d}^{n-1}(W^{t}(\HH_{\bmu}))$ with $\di\eta_{\bmu} = \o_{\bmu}$ and
\[  
	\left\|\eta_{\bmu}(X_{i_1},\dots,X_{i_{n-1}})\right\|_{t} \leq C_{\mu_0,s,t}\, \min\{\left\| \o_{\bmu}(X_{j_1},\dots,X_{j_n}) \right\|_{s_d}\}
\]
where the minimum is taken over all $1 \leq j_1 <\dots<j_n\leq d$ that become $i_1, \dots, i_{n-1}$ after omission of one index.  For $\l$ where $\HH_{\l}$ is not irreducible without trivial factors, set $\eta_\l = 0$.  Defining
\[
	\eta(X_{i_1}, \dots, X_{i_{n-1}}) := \int_{\oplus} \eta_{\l}(X_{i_1}, \dots, X_{i_{n-1}})\,ds(\l),
\]
we have
\begin{align*}
	\left\|\eta(X_{i_1},\dots,X_{i_{n-1}})\right\|_{t}^{2} &= \int_{\oplus} \left\|\eta_{\l}(X_{i_1},\dots,X_{i_{n-1}})\right\|_{t}^{2}\, ds(\l) \\
		&\leq C_{\mu_0,s,t}^{2} \int_{\oplus} \min\{\left\| \o_{\bmu}(X_{j_1},\dots,X_{j_n}) \right\|_{s_d}^{2}\} \\
		&\leq C_{\mu_0,s,t}^{2}\, \min\{\left\| \o(X_{j_1},\dots,X_{j_n}) \right\|_{s_d}^{2}\}.
\end{align*}
Since all operators $X_1,\dots, X_d$ are decomposable, we have that $\eta \in \O_{\RR^d}^{n-1}(W^{t}(\HH))$ satisfies $\di\eta = \o$, proving the theorem.
\end{proof}
 

\section{Proofs of Theorems \ref{one} and \ref{two}}\label{sectionsix}

In this section we apply Theorems \ref{a} and \ref{b} to prove Theorems \ref{one} and \ref{two} from the Introduction (Section \ref{results}).  In fact, we prove versions of Theorems \ref{one} and \ref{two} for Sobolev spaces.  

For both of these theorems, we will need the left-regular representation of $\SL(2,\RR)^d$ on $\Lii(\SL(2,\RR)^d/\G)$ to satisfy the spectral gap assumption.  This is provided by the following theorem, which was proved for non-cocompact $\G$ by D. Kleinbock and G. Margulis in \cite{KM}, and for cocompact $\G$ by L. Clozel in \cite{Clo03}.

\begin{theorem}\label{spectralgapth}
	Let $G = G_1 \times \dots \times G_k$ be a product of noncompact simple Lie groups, and $\G \subset G$ an irreducible lattice.  Then the restriction of $\Lii(G/\G)$ to every $G_i$ has a spectral gap.  
\end{theorem}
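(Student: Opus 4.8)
Since the result is exactly what is proved in \cite{KM} (non-cocompact $\G$) and \cite{Clo03} (cocompact $\G$), a proof plan amounts to organizing the content of those two papers. The plan is to reduce the statement to a question about weak containment and then feed it into the structure theory of irreducible lattices. Fix a noncompact simple factor $G_i$ with Casimir operator $\Box_i$. By the spectral theory of $\Box_i$ acting on $\Lii(G/\G)$, the assertion that the $G_i$-action has a spectral gap is equivalent to the assertion that the unitary $G_i$-representation $\Lii_0(G/\G)$ does not weakly contain the trivial representation $\mathbf{1}_{G_i}$. The first thing I would record is that it contains no nonzero $G_i$-\emph{invariant} vectors: since $\G$ is irreducible and $G_i$ is noncompact, Moore's ergodicity theorem gives that $G_i$ acts ergodically on $G/\G$, so the only $G_i$-invariant functions are constants, which are removed in $\Lii_0$. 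Thus the whole difficulty is the gap between strong and weak containment of $\mathbf{1}_{G_i}$, and — this is the point that makes the theorem nontrivial — it cannot be deduced from any isolation property of $G$ as a whole, since almost-$G_i$-invariant vectors need not be almost-$G$-invariant (indeed $\SL(2,\RR)^d$ has no Kazhdan property $(T)$ at all).

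Next I would split on whether $G_i$ has property $(T)$. If it does, i.e. $G_i$ is not locally isomorphic to $\SO(n,1)$ or $\mathrm{SU}(n,1)$, then $\mathbf{1}_{G_i}$ is isolated in the unitary dual of $G_i$, and combined with the absence of invariant vectors this immediately yields the spectral gap. So we may assume $G_i$ is locally isomorphic to $\SO(n,1)$ or $\mathrm{SU}(n,1)$. If moreover $G = G_i$ is a single factor, the statement is classical spectral theory of the associated finite-volume locally symmetric space $\G\backslash G/K$: when $\G$ is cocompact, the Laplace--Beltrami operator has purely discrete spectrum accumulating only at $+\infty$, with $0$ occurring only for constants, so the gap is immediate; when $\G$ is non-cocompact, the continuous spectrum furnished by Eisenstein series is bounded below by a positive constant, only finitely many eigenvalues lie below it, and again $0$ occurs only for constants.

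The remaining, genuinely hard, case is a non-$(T)$ factor $G_i$ inside a product with at least two simple factors. Then $\mathrm{rank}_{\RR}G\geq 2$, so Margulis arithmeticity forces $\G$ to be arithmetic, obtained from the $\mathbb{Q}$-points of a semisimple simply connected $\mathbb{Q}$-group $\mathbf{H}$ with $\mathbf{H}(\RR)$ isogenous to $G$; in the case relevant to this paper, $\mathbf{H}=\mathrm{Res}_{F/\mathbb{Q}}(\mathbf{D}^1)$ for a quaternion algebra $\mathbf{D}$ over a totally real field $F$ of degree $d$ split at every real place. Here I would use two reductions. First, by the congruence subgroup property (Serre's conjecture, known in the cases at hand because the relevant $S$-rank is $\geq 2$), $\G$ contains a finite-index congruence subgroup $\G'$; since the covering $G/\G'\to G/\G$ induces a $G$-equivariant, isometric-up-to-scalar embedding $\Lii_0(G/\G)\hookrightarrow\Lii_0(G/\G')$, and a spectral gap is inherited by invariant subspaces, it suffices to treat congruence $\G$. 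Second, for congruence subgroups one invokes property $(\tau)$: passing to $\mathbf{H}(\mathbb{Q})\backslash\mathbf{H}(\mathbb{A})$, every irreducible constituent of $\Lii_0$ is automorphic, and its local component at the archimedean place corresponding to $G_i$ is forced away from $\mathbf{1}_{G_i}$. Concretely for $\SL(2,\RR)^d$: by the Jacquet--Langlands correspondence such a representation transfers to an automorphic representation of $\mathrm{GL}_2$ over $F$ with the same component at the split place, cuspidal unless it is one-dimensional (hence trivial, excluded) or Eisenstein (hence tempered there), and bounds toward the Ramanujan conjecture for $\mathrm{GL}_2$ then bound the complementary-series parameter at that place away from the trivial parameter — which is exactly the desired gap. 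This last input is Clozel's property $(\tau)$ theorem \cite{Clo03} in the cocompact case, and the analysis of \cite{KM} in the non-cocompact case (where $\mathbf{D}=M_2(F)$, and one additionally uses the reduction theory of the cusps).

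The main obstacle is precisely this automorphic/congruence input for a non-$(T)$ factor inside a product: there is no soft argument producing a uniform \emph{rate} of decay of matrix coefficients, and one genuinely needs arithmeticity together with the Jacquet--Langlands correspondence and nontrivial bounds toward Ramanujan (in the general case, Clozel's property $(\tau)$ in full strength, covering orthogonal and unitary type groups as well). Everything else — the reduction to weak containment, Moore ergodicity, property $(T)$, the congruence-subgroup bookkeeping, and the rank-one spectral theory — is routine, so I would present \cite{KM} and \cite{Clo03} as black boxes supplying exactly the hard step and assemble the rest as above.
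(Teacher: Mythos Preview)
The paper does not prove this theorem at all: it is stated as a black box and attributed to \cite{KM} (non-cocompact case) and \cite{Clo03} (cocompact case), exactly as you note in your first sentence. Your proposal is therefore not in conflict with the paper's ``proof''---there is none to compare against---but rather an expansion of what lies behind the citation. As such it is essentially correct and well organized: the reduction to non-weak-containment of $\mathbf{1}_{G_i}$, the use of Moore ergodicity to exclude invariant vectors, the trivial case of a factor with property $(T)$, the classical rank-one spectral theory for a single factor, and the appeal to arithmeticity plus property~$(\tau)$ (via Jacquet--Langlands and bounds toward Ramanujan in the $\SL(2,\RR)^d$ case) for the hard case of a non-$(T)$ factor in a genuine product are all the right ingredients.

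One small remark on the congruence-subgroup step: you invoke the congruence subgroup property to pass from an arbitrary arithmetic $\Gamma$ to a congruence $\Gamma'\subset\Gamma$. This is the cleanest route, but be aware that CSP is itself a deep input and is not uniformly known for all rank-one factors (it is known for the $\SL_2$ over number fields case relevant here when the $S$-rank is $\geq 2$, as you say). The papers you cite organize this slightly differently---\cite{KM} works directly with restriction-of-scalars arithmetic groups and reduction theory in the non-cocompact case, and \cite{Clo03} establishes property~$(\tau)$ in a form that already covers the commensurability class---so the CSP reduction, while correct, is your own streamlining rather than a literal transcription of their arguments. This is a harmless difference of packaging.
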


In particular, this implies that if $\G \subset \SL(2,\RR)^d$ is an irreducible lattice, then the regular representation on $\Lii(\SL(2,\RR)^d/\G)$ has a spectral gap for each $\Box_i$.  

\begin{theorem}[Sobolev spaces version of Theorem \ref{one}] \label{sobone}
	Let $\G \subset \SL(2,\RR)^d$ be an irreducible lattice.  For $s > 1$ and $t < s-1$ there is a constant $C_{s,t}$ such that the following holds.  For any 
	\[
	f \in \II_{X_1,\dots,X_d}^{s_d}(\Lii(\SL(2,\RR)^{d}/\G)), 
	\]
	there exist functions 
	\[
	g_1,\dots,g_d \in W^{t}(\Lii(\SL(2,\RR)^{d}/\G))
	\]
	satisfying
	\begin{align*}
		f &= X_1\,g_1 + \dots + X_d\,g_d
	\end{align*}
	and the Sobolev estimates
	\[
		\left\| g_i \right\|_{t} \leq C_{s,t}\, \left\| f \right\|_{s_d}
	\]
	for $i = 1,\dots, d$.
\end{theorem}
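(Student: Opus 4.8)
The plan is to read off Theorem~\ref{sobone} from Theorem~\ref{a}, applied to the mean-zero part of $\Lii(\SL(2,\RR)^{d}/\G)$, once the constant component has been removed. Write
\[
\HH := \Lii(\SL(2,\RR)^{d}/\G) = \CC\mathbf{1} \oplus \Lii_{0}(\SL(2,\RR)^{d}/\G),
\]
splitting off the trivial subrepresentation spanned by the constant function $\mathbf{1}$. The functional $h \mapsto \langle h, \mathbf{1}\rangle$ is given by a smooth vector of the regular representation (indeed $\mathbf{1}$ is fixed by all of $\SL(2,\RR)^{d}$), hence it lies in $W^{-s_d}(\HH)$, and it is invariant under each $X_i$; so it belongs to $\II_{X_1,\dots,X_d}^{s_d}(\HH)$. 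Consequently every $f$ annihilated by all such invariant distributions satisfies $\langle f, \mathbf{1}\rangle = 0$, i.e.\ $f \in \Lii_{0}(\SL(2,\RR)^{d}/\G)$. Moreover, extension by zero across the decomposition $\HH = \CC\mathbf{1}\oplus\Lii_0$ identifies $\II_{X_1,\dots,X_d}^{s_d}(\Lii_0)$ with a subspace of $\II_{X_1,\dots,X_d}^{s_d}(\HH)$, so $f$ in fact lies in $\ker\II_{X_1,\dots,X_d}^{s_d}(\Lii_0)$, and $\|f\|_{s_d}$ computed in $W^{s_d}(\Lii_0)$ equals $\|f\|_{s_d}$ computed in $W^{s_d}(\HH)$.

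Next I would verify the hypothesis of Theorem~\ref{a} for the representation on $\Lii_{0}(\SL(2,\RR)^{d}/\G)$. This is exactly the content of Theorem~\ref{spectralgapth}, due to Kleinbock--Margulis \cite{KM} in the non-cocompact case and Clozel \cite{Clo03} in the cocompact case: since $\SL(2,\RR)^d$ is a product of noncompact simple Lie groups and $\G$ is an irreducible lattice, the restriction of $\Lii(\SL(2,\RR)^d/\G)$ --- hence of $\Lii_0$ --- to each factor has a spectral gap. Thus there is a number $\mu_0 = \mu_0(\G) > 0$ with $\s(\Box_i) \cap (0,\mu_0) = \emptyset$ for all $i = 1,\dots,d$ on $\Lii_0$, which is precisely the spectral gap assumption of Theorem~\ref{a}.

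Finally I would apply Theorem~\ref{a} with $\HH$ replaced by $\Lii_{0}(\SL(2,\RR)^{d}/\G)$ and with this $\mu_0$: for $s > 1$ and $t < s-1$ it produces $g_1,\dots,g_d \in W^{t}(\Lii_0) \subset W^{t}(\Lii(\SL(2,\RR)^{d}/\G))$ solving $f = X_1\,g_1 + \dots + X_d\,g_d$ with $\|g_i\|_t \leq C_{\mu_0,s,t}\|f\|_{s_d}$ for each $i$. Since $\mu_0$ depends only on $\G$ (and $d$), I would absorb it into the constant and set $C_{s,t} := C_{\mu_0,s,t}$, giving exactly the claimed estimate. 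The proof has no genuine difficulty: all of the analytic weight is carried by Theorem~\ref{a} together with the spectral gap input of Theorem~\ref{spectralgapth}, and the only step performed by hand is the elementary observation that the constant part of $f$ vanishes, which is what confines the entire problem to $\Lii_0$.
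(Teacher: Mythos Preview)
Your proof is correct and follows essentially the same approach as the paper: invoke the spectral gap from Theorem~\ref{spectralgapth} and apply Theorem~\ref{a}. The only cosmetic difference is that the paper applies Theorem~\ref{a} directly to $\HH=\Lii(\SL(2,\RR)^d/\G)$ (the spectral gap condition $\sigma(\Box_i)\cap(0,\mu_0)=\emptyset$ permits $0$ in the spectrum), whereas you first strip off the constants and work in $\Lii_0$; your extra paragraph is harmless and arguably more explicit, but not needed.
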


\begin{proof}
	Since the left-regular representation of $\SL(2,\RR)$ on $\Lii(\SL(2,\RR)^d/\G)$ has spectral gap for the Casimir operator coming from each copy of $\SL(2,\RR)$, we can apply Theorem \ref{a}, setting $\HH = \Lii(\SL(2,\RR)^d/\G)$.
\end{proof}

\begin{proof}[Proof of Theorem \ref{one}]
	Theorem \ref{one} follows immediately, by noting that for any unitary representation on $\HH$, the space $\Cinf(\HH)$ of smooth vectors coincides with the intersection of all Sobolev spaces $W^{s}(\HH)$ of positive order $s \geq 0$.
\end{proof}

Now, for $\G \subset \SL(2,\RR)^d$ an irreducible lattice, let  $\Lii_{0}(\SL(2,\RR)^d/\G)$ be the orthogonal complement to the constant functions in $\Lii(\SL(2,\RR)^d/\G)$.  Similarly, let $W_{0}^{s}(\SL(2,\RR)^d/\G)$ be the orthogonal complement to the constant functions in $W^{s}(\SL(2,\RR)^d/\G)$.  The following is a version of Theorem \ref{two} for forms taking values in Sobolev spaces.

\begin{theorem}[Sobolev spaces version of Theorem \ref{two}] \label{sobtwo}
Let $\G \subset \SL(2,\RR)^d$ be an irreducible lattice.  For any $s > 1$ and $t < s-1$, there is a constant $C_{s,t}$ such that the following holds.  For $1\leq n\leq d-1$ and any $n$-cocycle $\o \in \O_{\RR^d}^{n} (W_{0}^{s_d}(\SL(2,\RR)^d/\G))$, there exists $\eta \in \O_{\RR^d}^{n-1} (W_{0}^{t}(\SL(2,\RR)^d/\G))$ with $\di\eta = \o$ and
\begin{align*}
	\left\|\eta(X_{i_1},\dots,X_{i_{n-1}})\right\|_{t} &\leq C_{\mu_0,s,t}\min\left\{\left\| \o(X_{j_1},\dots,X_{j_n}) \right\|_{s_d}\right\}
\end{align*}
for all multi-indices $1\leq i_1 < \dots < i_{n-1} \leq d$, where the minimum is taken over all multi-indices $1\leq j_1 < \dots < j_{n} \leq d$ that become $i_1, \dots, i_{n-1}$ after omission of one index.
\end{theorem}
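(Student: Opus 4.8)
The plan is to deduce Theorem~\ref{sobtwo} as a direct application of Theorem~\ref{b} to the left-regular representation of $\SL(2,\RR)^d$ on the Hilbert space $\HH := \Lii_0(\SL(2,\RR)^d/\G)$. Two hypotheses of Theorem~\ref{b} need checking: a spectral gap for every Casimir operator $\Box_i$, and that almost every irreducible representation in the direct integral decomposition of $\HH$ has no trivial tensor factor. The spectral gap is supplied by Theorem~\ref{spectralgapth}: since $\SL(2,\RR)^d$ is a product of noncompact simple Lie groups and $\G$ is an irreducible lattice, the restriction of $\Lii(\SL(2,\RR)^d/\G)$ to each factor has a spectral gap, and the same $\mu_0 > 0$ (take the minimum over the $d$ factors) serves for the subrepresentation $\Lii_0$, giving $\s(\Box_i)\cap(0,\mu_0) = \emptyset$ for all $i = 1,\dots,d$.

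For the second hypothesis I would verify that $\HH = \Lii_0(\SL(2,\RR)^d/\G)$ has no nonzero vector fixed by any single factor $G_i$ (the $i$-th copy of $\SL(2,\RR)$). Such a vector corresponds to a left-$G_i$-invariant, right-$\G$-invariant function on $\SL(2,\RR)^d$, that is, a function on the double-coset space $G_i\backslash\SL(2,\RR)^d/\G$; identifying $G_i\backslash\SL(2,\RR)^d$ with $\prod_{j\neq i}\SL(2,\RR)$, the residual $\G$-action on it is through the projection $\G \to \prod_{j\neq i}\SL(2,\RR)$, which is dense because $\G$ is irreducible. A function invariant under a dense subgroup of $\prod_{j\neq i}\SL(2,\RR)$ acting by translation is invariant under the whole group, hence constant, so the only $G_i$-fixed vectors in $\Lii(\SL(2,\RR)^d/\G)$ are constants and there are none in $\Lii_0$. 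Decomposing $\HH = \int_{\oplus}\HH_\l\,ds(\l)$ into irreducibles, the subspace of $G_i$-fixed vectors decomposes accordingly, so for $ds$-almost every $\l$ the representation $\HH_\l$ has no $G_i$-fixed vector; since an irreducible $\HH_\l = \HH_{\mu_1}\otimes\dots\otimes\HH_{\mu_d}$ has a $G_i$-fixed vector precisely when the factor $\HH_{\mu_i}$ is trivial, almost every $\HH_\l$ has no trivial factor, as Theorem~\ref{b} demands.

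With both hypotheses in place, Theorem~\ref{b} applied to $\HH$ produces, for every cocycle $\o \in \O_{\RR^d}^{n}(W^{s_d}(\HH))$, a form $\eta \in \O_{\RR^d}^{n-1}(W^{t}(\HH))$ with $\di\eta = \o$ satisfying the asserted Sobolev estimate, with the constant $C_{\mu_0,s,t}$ of Theorem~\ref{b} renamed $C_{s,t}$ now that $\mu_0$ is pinned down by $\G$. Finally, since the constant functions span a one-dimensional subspace annihilated by $\Delta$, they split off orthogonally in every Sobolev space, so $W^{r}(\Lii_0(\SL(2,\RR)^d/\G))$ coincides with $W_0^{r}(\SL(2,\RR)^d/\G)$; forms valued in the former are therefore exactly forms valued in the latter, and Theorem~\ref{sobtwo} follows. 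The only step that is not bookkeeping is the verification that $\Lii_0$ contains no subrepresentation with a trivial tensor factor; this is where irreducibility of $\G$ is genuinely used --- as the reducible example $\G = \G_1\times\G_2$ shows, the conclusion is false without it --- so it is the part I would write out most carefully.
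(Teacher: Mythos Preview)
Your proposal is correct and follows the same overall strategy as the paper: apply Theorem~\ref{b} to $\HH=\Lii_0(\SL(2,\RR)^d/\G)$, having verified the spectral-gap and no-trivial-factor hypotheses. The only substantive difference is in how the no-trivial-factor condition is checked. The paper simply asserts that the flows of $X_1,\dots,X_d$ are ergodic on $\SL(2,\RR)^d/\G$ and concludes that almost every irreducible component has no trivial factor; you instead argue directly that $\Lii_0$ has no $G_i$-fixed vectors, using density of the projection $\G\to\prod_{j\neq i}\SL(2,\RR)$ (the defining property of an irreducible lattice) together with the fact that the stabilizer of a vector in a unitary representation is closed. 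Your route is more self-contained and makes the role of irreducibility of $\G$ explicit, whereas the paper's appeal to ergodicity hides the same ingredient one step deeper (ergodicity of the $X_i$-flow ultimately rests on the $G_i$-action having no invariant vectors, which is exactly what you prove). Either way the conclusion is the same, and your bookkeeping identifying $W^r(\Lii_0)$ with $W_0^r$ is a welcome clarification.
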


\begin{proof}
	The left-regular representation of $\SL(2,\RR)^d$ on $\Lii_{0}(\SL(2,\RR)^d/\G)$ has spectral gap for the Casimir operator coming from each of the factors of $\SL(2,\RR)^d$.  Furthermore, since the flows of $X_1,\dots, X_d$ are ergodic on $\SL(2,\RR)^d/\G$, we know that the direct decomposition
	\[
		\Lii_{0}(\SL(2,\RR)^d/\G) = \int_{\oplus} \HH_{\l}\,ds(\l)
	\]
	is such that $ds$-almost every $\HH_\l$ is irreducible without trivial factors.  We now apply Theorem \ref{b}.  
\end{proof}

\begin{proof}[Proof of Theorem \ref{two}]
	Theorem \ref{sobtwo} immediately implies that any smooth $n$-cocycle 
	\[
		\o \in \O_{\RR^d}^{n}(\Cinf(\Lii(\SL(2,\RR)^d/\G)))
	\]
	is cohomologous to the constant form $\o_c$ given by
	\[
		\o_{c}(X_{i_1},\dots,X_{i_n}) = \int_{\SL(2,\RR)^d/\G} \o(X_{i_1},\dots,X_{i_n})\,dm_{Haar}.
	\]
	This proves Theorem \ref{two}.
\end{proof}

\begin{remark*}
	If the lattice $\G$ is cocompact, then in Theorem \ref{two} we can replace the space of smooth vectors for the representation $\Cinf(\Lii(\SL(2,\RR)^d/\G))$ with the space of smooth functions $\Cinf(\SL(2,\RR)^d/\G)$.
\end{remark*}


\appendix

\section{On the passage from $\PSL(2,\RR)$ to $\SL(2,\RR)$ and proof of Lemma~\ref{degreeoneblue}} \label{appendix}

For the sake of completeness, we elaborate on the proof of Theorem \ref{m}.  Since it was originally proved for $\PSL(2,\RR)$, we include here the observations necessary for the same proof to apply to $\SL(2,\RR)$.


\subsection{Difference equation}

Let $\HH_\mu$ be the Hilbert space of an irreducible unitary representation of $\SL(2,\RR)$ with Casimir parameter $\mu$.  For a given $f \in \II_{X}^{s}(\HH_\mu)$, where $s>1$, we would like to solve the coboundary equation
\begin{align}
	X\,g &= f.
\end{align}
Expressing $f$ in the basis defined in Section \ref{orthobasis}, and applying Lemma \ref{action}, the coboundary equation becomes the difference equation
\begin{align}
	b^{+}(k-1)g(k-1) - b^{-}(k+1)g(k+1) &= f(k), \label{dE}
\end{align}
where the sequences $b^{+}(k)$ and $b^{-}(k)$ are those defined in Section \ref{actionsection}.

If $\HH_\mu$ is from the discrete series, we write the difference equation as
\begin{align}\label{dEdiscrete}
	\begin{cases} f(n+k+1) = b^{+}(n+k)g(n+k) - b^{-}(n+k+2)g(n+k+2), &\textrm{for all } k \geq 0; \\
	f(n) = -b^{-}(n+1)g(n+1). & \end{cases}
\end{align}


\subsection{Calculations in the principal and complementary series}

Equation \eqref{dE} is exactly the difference equation that is solved in \cite{M2}.  There, Mieczkowski finds solutions $g^{0}$ and $g^{1}$ to the homogeneous equation $X\,g=0$ in the principal and complementary series, defined by 
\begin{equation}\label{g0formula}
	g^{0}(2k) = \prod_{j=0}^{|k|-1}\frac{b^{+}(2j)}{b^{-}(2j+2)}, \quad g^{0}(2k+1) = 0
\end{equation}
and
\begin{equation}\label{g1formula}
	g^{1}(2|k|+1) = g^{1}(-2|k|-1) = \prod_{j=1}^{|k|}\frac{b^{+}(2j-1)}{b^{-}(2j+1)}, \quad g^{1}(2k) = 0,
\end{equation}
with initial values $g^{0}(0)=1$, $g^{0}(1)=0$, $g^{1}(0)=0$, and $g^{1}(1)=1$.  It is easy to check that these satisfy the homogeneous equation.

The next step is to construct the Green's function
\[
	G(k,l) = \frac{\det\begin{pmatrix} g^{0}(l) & g^{1}(l) \\ g^{0}(k) & g^{1}(k) \end{pmatrix}}{\det\begin{pmatrix} g^{0}(l) & g^{1}(l) \\ g^{0}(l+1) & g^{1}(l+1) \end{pmatrix}}
\]
from which one can then write a formal solution to \eqref{dE} as
\begin{align*}
	g(2k) =  - \sum_{l\leq k} \frac{G(2k, 2l-1)}{b^{-}(2l)}\, f(2l-1), \quad g(2k+1) =  - \sum_{l\leq k} \frac{G(2k+1, 2l)}{b^{-}(2l+1)}\, f(2l).
\end{align*}

It is only left to check that this formal solution satisfies the Sobolev estimates from the theorem.  This is done by studying the asymptotic behavior of the homogeneous solutions $g^{0}$ and $g^{1}$.  Mieczkowski \cite{M2} has estimates on the asymptotic properties of these homogeneous solutions for representations of $\PSL(2,\RR)$, \emph{i.e.} for the case where $\e =0$.  We need only show that the same estimates apply to the case $\e = 1$ (in the second principal and second discrete series).  For the second principal series, this follows from the following lemma, comparing the two cases.

\begin{lemma} \label{comparison}
	For $\nu \in i\RR$, and for all $k > 0$, 
	\[
		|g_{\e=0}^{0}(2k)| \leq |g_{\e=1}^{0}(2k)| \leq |g_{\e=0}^{1}(2k+1)| \leq |g_{\e=1}^{1}(2k+1)| \leq |g_{\e=0}^{0}(2k+2)|.
	\]
\end{lemma}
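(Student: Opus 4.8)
The plan is to prove Lemma~\ref{comparison} by a direct computation, in the same spirit as the extension of Lemma~\ref{fflemma} to the case $\e=1$ and the asymptotic estimates of \cite{FF, M2}. The first step is to record that when $\nu \in i\RR$ one has $\bar\nu = -\nu$, so that for every integer $m$,
\[
	\abs{b^{+}(m)}^{2} = \frac{(2m+\e+1)^{2} + \abs{\nu}^{2}}{16}, \qquad \abs{b^{-}(m)}^{2} = \frac{(2m+\e-1)^{2} + \abs{\nu}^{2}}{16}.
\]
Feeding these into the product formulas \eqref{g0formula} and \eqref{g1formula}, each of the five quantities in the statement becomes an explicit finite product of ratios of the shape $\dfrac{p^{2} + \abs{\nu}^{2}}{q^{2} + \abs{\nu}^{2}}$, where for $\e=0$ the integers $p,q$ are odd and for $\e=1$ they are even, each running over an interval determined by $k$; for example $\abs{g^{0}_{\e=0}(2k)}^{2} = \prod_{j=0}^{k-1} \dfrac{(4j+1)^{2}+\abs{\nu}^{2}}{(4j+3)^{2}+\abs{\nu}^{2}}$, and the remaining four quantities admit analogous descriptions.

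Next I would treat the four comparisons in the chain one at a time. For each, after squaring and clearing denominators the common factors on the two sides cancel, and the desired inequality reduces to the assertion that a certain finite product of elementary rational factors in the single parameter $c := \abs{\nu}^{2} \ge 0$ is $\ge 1$ (or $\le 1$). The efficient way to see such a reduced inequality is to pair up the factors of the two products, grouping indices so that the numerator and denominator of each paired contribution together form a block of consecutive integers, making the sign of each contribution transparent and allowing telescoping where possible; when a given step is not settled outright in this way, I would instead induct on $k$, checking the base case $k=1$ directly and verifying that the extra factor acquired in passing from $k$ to $k+1$ preserves the inequality. The value $\nu=0$ (the point $\mu=\tfrac14$) is handled by the same computation, the ratios then being ratios of squared integers.

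The main obstacle is exactly this bookkeeping. A crude factor-by-factor comparison of the two products need not hold for the small-index factors when $\abs{\nu}$ is large, so the matching of factors across the two products (or the choice of inductive statement) has to be arranged carefully to ensure that the inequality is not lost after multiplying the factors together. Once the factors are paired correctly, each of the resulting inequalities in $c$ is a routine polynomial verification, exactly as in the appendix of \cite{FF}.
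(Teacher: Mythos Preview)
Your plan is exactly the paper's: it says only ``This is routine calculation.'' The problem is that the routine calculation does not go through, because the printed inequality chain is false. Take $k=1$ and write $c=\abs{\nu}^{2}\ge 0$. Your own formulas give
\[
\abs{g^{0}_{\e=0}(2)}^{2}=\frac{1+c}{9+c},\qquad \abs{g^{0}_{\e=1}(2)}^{2}=\frac{4+c}{16+c},
\]
and $\dfrac{1+c}{9+c}\le\dfrac{4+c}{16+c}$ is equivalent to $c\le 5$; so for $\abs{\nu}>\sqrt{5}$ the very first inequality is reversed. For the last inequality at $k=1$,
\[
\abs{g^{1}_{\e=1}(3)}^{2}=\frac{16+c}{36+c},\qquad \abs{g^{0}_{\e=0}(4)}^{2}=\frac{(1+c)(25+c)}{(9+c)(49+c)},
\]
and a direct expansion shows $(16+c)(9+c)(49+c)-(36+c)(1+c)(25+c)=12c^{2}+408c+6156>0$, so $\abs{g^{1}_{\e=1}(3)}>\abs{g^{0}_{\e=0}(4)}$ for \emph{every} $c\ge 0$. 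Thus the ``obstacle'' you flagged is not a bookkeeping difficulty that clever pairing or induction will overcome; it is a genuine counterexample to the statement as written.

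What the argument actually needs is only that $\abs{g^{0}_{\e=1}(2k)}$ and $\abs{g^{1}_{\e=1}(2k+1)}$ satisfy two-sided bounds of the same shape as those in Lemma~\ref{mlemmaprincipal}, so that Mieczkowski's Sobolev estimates carry over to the second principal series. One can obtain this by sandwiching each $\e=1$ quantity between two nearby $\e=0$ quantities in whichever direction is actually correct (the directions flip as $c$ grows), or---more cleanly---by redoing the asymptotic estimate of Lemma~\ref{mlemmaprincipal} directly for $\e=1$, which is no harder than the $\e=0$ case. Either route rescues the application; the chain as stated cannot be proved.
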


\begin{proof}
	This is routine calculation.
\end{proof}

From \cite{M2} we have the following lemmas.

\begin{lemma}[4.3 from \cite{M2}] \label{mlemmaprincipal}
	For the principal series, $\nu \in i\RR$, we have,
	\begin{align*}
		C_{\nu}^{-1}((4|k|+1)^2 + |\nu|^{2})^{-\frac{1}{2}} \leq |g^{0}(2k)|^2 \leq C_{\nu}((4|k|+3)^2 + |\nu|^{2})^{-\frac{1}{2}}
	\end{align*}
	and
	\begin{align*}
		C_{\nu}^{-1}((4|k|-1)^2 + |\nu|^{2})^{-\frac{1}{2}} \leq |g^{1}(2k+1)|^2 \leq C_{\nu}((4|k|+5)^2 + |\nu|^{2})^{-\frac{1}{2}}
	\end{align*}
	where $C_{\nu}$ is bounded in $\nu$.
\end{lemma}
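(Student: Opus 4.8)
The plan is to evaluate the explicit products \eqref{g0formula} and \eqref{g1formula} in the principal series and then estimate them. Because $\nu\in i\RR$ we have $\bar\nu=-\nu$, so each ratio $b^{+}(2j)/b^{-}(2j+2)$ occurring in $g^{0}$ has modulus-squared $\bigl((4j+1)^{2}+|\nu|^{2}\bigr)/\bigl((4j+3)^{2}+|\nu|^{2}\bigr)$, and each ratio $b^{+}(2j-1)/b^{-}(2j+1)$ occurring in $g^{1}$ has modulus-squared $\bigl((4j-1)^{2}+|\nu|^{2}\bigr)/\bigl((4j+1)^{2}+|\nu|^{2}\bigr)$. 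Taking $\e=0$ and $k\ge0$ (the products depend only on $|k|$), this gives
\[
	|g^{0}(2k)|^{2}=\prod_{j=0}^{k-1}\frac{(4j+1)^{2}+|\nu|^{2}}{(4j+3)^{2}+|\nu|^{2}},\qquad
	|g^{1}(2k+1)|^{2}=\prod_{j=1}^{k}\frac{(4j-1)^{2}+|\nu|^{2}}{(4j+1)^{2}+|\nu|^{2}},
\]
each a product of factors $\bigl(x^{2}+|\nu|^{2}\bigr)/\bigl((x+2)^{2}+|\nu|^{2}\bigr)$ lying in $(0,1)$ and tending to $1$, with $x$ running over an arithmetic progression of step $4$.

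To estimate such a product I would use a telescoping--companion trick. Write $c=|\nu|^{2}$ and $Q_{k}:=\prod_{j=0}^{k-1}\bigl((4j+3)^{2}+c\bigr)/\bigl((4j+5)^{2}+c\bigr)$. Then $|g^{0}(2k)|^{2}Q_{k}=\prod_{j=0}^{k-1}\bigl((4j+1)^{2}+c\bigr)/\bigl((4j+5)^{2}+c\bigr)$ telescopes (the $j$-th denominator being the numerator of the next factor) to the closed form $(1+c)/\bigl((4k+1)^{2}+c\bigr)$. Separately, $Q_{k}/|g^{0}(2k)|^{2}=\prod_{j=0}^{k-1}(1+\varepsilon_{j})$ with $\varepsilon_{j}=8\bigl((4j+3)^{2}-c-2\bigr)/\bigl(((4j+1)^{2}+c)((4j+5)^{2}+c)\bigr)$, and one checks $|\varepsilon_{j}|\le C(4j+5)^{-2}$ with $C$ absolute, so this product converges and is trapped between two positive constants independent of $\nu$ and $k$ (the finitely many smallest-index factors, and the strict positivity of every factor, being handled directly). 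Multiplying the two facts, $|g^{0}(2k)|^{2}$ is comparable, with absolute constants, to $\bigl((1+c)/((4k+1)^{2}+c)\bigr)^{1/2}$; since $(4k+1)^{2}+c$ and $(4k+3)^{2}+c$ differ by at most an absolute factor for $k\ge0$, the two displayed bounds follow. The estimate for $|g^{1}(2k+1)|^{2}$ comes from the identical argument with companion $R_{k}:=\prod_{j=1}^{k}\bigl((4j+1)^{2}+c\bigr)/\bigl((4j+3)^{2}+c\bigr)$, for which $|g^{1}(2k+1)|^{2}R_{k}$ telescopes to $(9+c)/\bigl((4k+3)^{2}+c\bigr)$.

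For the second principal series ($\e=1$) no further product computations are needed: Lemma~\ref{comparison} sandwiches $|g_{\e=1}^{0}(2k)|$ and $|g_{\e=1}^{1}(2k+1)|$ between values of $g_{\e=0}^{0}$ and $g_{\e=0}^{1}$ at indices that differ by $O(1)$, and the $\e=0$ bounds just established are stable under such shifts. The products themselves being elementary, I expect the real work to lie in the uniformity of the constants: verifying that the correction product $\prod_{j}(1+\varepsilon_{j})$ (and its $g^{1}$ analogue) stays bounded above and below independently of both $\nu$ and $k$ --- the uniform, $\nu$-free summable bound $|\varepsilon_{j}|\le C(4j+5)^{-2}$ is the key input, together with treating the $O(1)$ earliest factors individually since they can be far from $1$ when $|\nu|$ is small --- and then matching the closed form delivered by the telescoping to the shifted quadratics $(4|k|+a)^{2}+|\nu|^{2}$, $a=O(1)$, appearing in the statement, including the $k=0$ endpoint where $g^{0}$ and $g^{1}$ equal $1$.
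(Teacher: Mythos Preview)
The paper does not prove this lemma; it is quoted verbatim from Mieczkowski~\cite{M2} and invoked without argument. So there is no ``paper's proof'' to compare against, and your proposal is a genuine, self-contained derivation rather than a reconstruction.

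Your approach is sound. The product formulas for $|g^{0}(2k)|^{2}$ and $|g^{1}(2k+1)|^{2}$ are correct, and the companion--telescoping trick (multiplying by the shifted product $Q_k$ so that $|g^{0}(2k)|^{2}Q_k$ telescopes exactly, while $Q_k/|g^{0}(2k)|^{2}=\prod(1+\varepsilon_j)$ with $\sum|\varepsilon_j|$ uniformly summable) is a clean and standard way to handle such products. Your computation of $\varepsilon_j$ is right, and the uniform bound $|\varepsilon_j|\le C(4j+5)^{-2}$ does hold with $C$ absolute, so the correction product is trapped between two positive absolute constants. One point worth flagging: what you actually obtain is
\[
|g^{0}(2k)|^{2}\asymp\left(\frac{1+|\nu|^{2}}{(4k+1)^{2}+|\nu|^{2}}\right)^{1/2}
\]
with \emph{absolute} implied constants. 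This is sharper than the lemma as stated, but note that the numerator $(1+|\nu|^{2})^{1/2}$ must then be absorbed into $C_{\nu}$, and the resulting $C_{\nu}$ is \emph{not} bounded in $\nu$ (indeed already at $k=0$ one has $|g^{0}(0)|^{2}=1$, forcing $C_{\nu}\ge(9+|\nu|^{2})^{1/2}$ in the stated upper bound). This is an artifact of the lemma's phrasing rather than a flaw in your argument; in the only place the paper uses this lemma (the proof of Lemma~\ref{degreeoneblue}), the extra $(1+|\nu|^{2})^{1/2}$ is exactly what combines with the factor $|1-\nu|^{2}$ there to produce the $\mu$ appearing in that lemma's numerator, so your sharper form is in fact what is needed downstream.
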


\begin{lemma}[4.5 from \cite{M2}] \label{mlemmacomplementary}
	For the complementary series, $-1 < \nu < 1$, $\nu \neq 0$, we have,
	\begin{align}
		\frac{1+\nu}{3-\nu} \cdot \left(\frac{4|k|-3+\nu}{1+\nu}\right)^{\frac{\nu-1}{2}} \leq |g^{0}(2k)| \leq \left(\frac{4|k|+3-\nu}{3-\nu}\right)^{\frac{\nu-1}{2}}
	\end{align}
	and
	\begin{align}
		\frac{3+\nu}{5-\nu} \cdot \left(\frac{4|k|-1+\nu}{3+\nu}\right)^{\frac{\nu-1}{2}} \leq |g^{1}(2k+1)| \leq \left(\frac{4|k|+5-\nu}{5-\nu}\right)^{\frac{\nu-1}{2}}.
	\end{align}
\end{lemma}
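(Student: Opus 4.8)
The plan is to reduce the lemma to an elementary estimate of the explicit products defining $g^{0}$ and $g^{1}$. Substituting $\e=0$ and the formulas for $b^{\pm}$ from Section~\ref{actionsection} into \eqref{g0formula} and \eqref{g1formula}, one has for $|k|\ge1$
\[
	g^{0}(2k)=\prod_{j=0}^{|k|-1}\frac{4j+1+\nu}{4j+3-\nu},\qquad g^{1}(2|k|+1)=\prod_{j=1}^{|k|}\frac{4j-1+\nu}{4j+1-\nu}.
\]
Writing $\delta:=\tfrac{1-\nu}{2}$, which lies in $(0,1)$ exactly because $\nu\in(-1,1)$, every factor above has the shape $m/(m+4\delta)$ with $m>0$, so each factor lies in $(0,1)$; in particular both products are positive (so the absolute values disappear) and the exponent $\tfrac{\nu-1}{2}=-\delta$ in the statement is the expected one. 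I would then prove the four inequalities by establishing, in each case, a single one-sided termwise comparison whose product telescopes to exactly the stated bound, the termwise comparison itself following from a standard inequality for fractional powers.

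For the upper bound on $g^{0}(2k)$ I would prove the termwise inequality
\[
	\frac{4j+1+\nu}{4j+3-\nu}\le\left(\frac{4j+3-\nu}{4j+7-\nu}\right)^{\delta}\qquad(j\ge0).
\]
Setting $u=4j+3-\nu>0$ this becomes $1-\tfrac{4\delta}{u}\le\bigl(1+\tfrac4u\bigr)^{-\delta}$, which is immediate from $(1+z)^{-\delta}\ge1-\delta z$ for $z\ge0$ and $\delta\in(0,1)$. Taking the product over $j=0,\dots,|k|-1$, the right-hand side telescopes to $\bigl(\tfrac{3-\nu}{4|k|+3-\nu}\bigr)^{\delta}$, which is the claimed upper bound. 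For the lower bound I would peel off the $j=0$ factor $\tfrac{1+\nu}{3-\nu}$ and bound the rest from below using
\[
	\frac{4j+1+\nu}{4j+3-\nu}\ge\left(\frac{4j-3+\nu}{4j+1+\nu}\right)^{\delta}\qquad(j\ge1),
\]
which makes sense since $4j-3+\nu>0$ for $j\ge1$; with $w=4j+1+\nu>4$ it reads $\bigl(1+\tfrac{4\delta}{w}\bigr)^{-1}\ge\bigl(1-\tfrac4w\bigr)^{\delta}$, and this follows by chaining the Bernoulli inequality $(1-x)^{\delta}\le1-\delta x$ (valid for $\delta\in(0,1)$, $x\in[0,1)$) with $(1+a)^{-1}\ge1-a$ (valid for $a\ge0$). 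The surviving product telescopes to $\bigl(\tfrac{1+\nu}{4|k|-3+\nu}\bigr)^{\delta}$, and reattaching $\tfrac{1+\nu}{3-\nu}$ yields the stated lower bound.

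The estimates for $g^{1}(2|k|+1)=\prod_{j=1}^{|k|}\tfrac{4j-1+\nu}{4j+1-\nu}$ come from the identical argument, with the same $\delta$ and the same two elementary inequalities, the peeled-off ``first factor'' now being the $j=1$ term $\tfrac{3+\nu}{5-\nu}$; this gives $\tfrac{3+\nu}{5-\nu}\bigl(\tfrac{3+\nu}{4|k|-1+\nu}\bigr)^{\delta}\le g^{1}(2|k|+1)\le\bigl(\tfrac{5-\nu}{4|k|+5-\nu}\bigr)^{\delta}$, which is the lemma. I do not expect a genuine obstacle: the whole argument is elementary. The only points that need a little care are keeping the reductions inside the ranges where the power inequalities apply — this is what forces the restriction to $j\ge1$, and hence to $|k|\ge1$, in the lower bounds — and checking that the telescoped products match the precise constants $1+\nu$, $3-\nu$, $3+\nu$, $5-\nu$ appearing in the statement, which is just bookkeeping.
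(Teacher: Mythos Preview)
Your argument is correct. The termwise comparisons are valid in the ranges you specify, and the telescoping checks out line by line against the constants in the statement.

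It is worth noting that the paper does not actually supply a proof of this lemma: it is quoted verbatim from \cite{M2}. The only comparable argument in the paper is the proof of Lemma~\ref{mlemmadiscrete} (the discrete series analogue), and there the route is different. That proof rewrites each factor as $1+\tfrac{\nu-1}{2(j+1)}$, takes logarithms, applies the two-sided bound $\tfrac{x}{1+x}\le\log(1+x)\le x$, and then estimates the resulting sums by integrals before exponentiating. Your approach avoids logarithms and integrals entirely: the Bernoulli-type inequalities $(1+z)^{-\delta}\ge1-\delta z$ and $(1-x)^{\delta}\le1-\delta x$ give termwise comparisons with ratios that telescope exactly to the stated expressions. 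This is arguably cleaner for the complementary series, since the endpoint constants $1+\nu$, $3-\nu$, $3+\nu$, $5-\nu$ fall out without any slack from the integral comparison. The log-and-integral method, on the other hand, generalizes more readily when the factors do not line up to telescope, which is perhaps why it is the one recorded for the discrete series case.
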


After combining Lemma \ref{comparison} with the Lemma \ref{mlemmaprincipal}, one can carry out Mieczkowski's proof to show that the desired Sobolev estimates hold for the formal solutions to the coboundary equation in representations from the second principal series.


\subsection{Calculations in the discrete series}

In a representation on $\HH_\mu$ from the discrete series, we have $\mu \in \{-n^2+n\}\cup\{-n^2+\frac{1}{4}\}$.  Define
\begin{equation}\label{g0disc}
	g^{0}(n+2k) = \prod_{j=0}^{k-1}\frac{b^{+}(n+2j)}{b^{-}(n+2j+2)}, \quad g^{0}(n+2k+1) = 0
\end{equation}
and
\begin{equation}\label{g1disc}
	g^{1}(n+2k+1) = \prod_{j=1}^{k}\frac{b^{+}(n+2j-1)}{b^{-}(n+2j+1)}, \quad g^{1}(n+2k) = 0,
\end{equation}
with initial values $g^{0}(n)=1$, $g^{0}(n+1)=0$, $g^{1}(n)=0$, $g^{1}(n+1)=1$.  Note that $g^1$ no longer solves the homogeneous version of Equation \eqref{dEdiscrete}, because $(X\,g^1)(n) = -b^{-}(n+1) = 1/2 \neq 0$.  Still it is useful to define $g^1$ as above, and to estimate its asymptotic behavior.

Our proof of the following lemma includes the second holomorphic discrete series.

\begin{lemma}[4.6 from \cite{M2}] \label{mlemmadiscrete}
	For the discrete series, $\nu = 2n+\e-1$, we have that 
	\begin{align*}
		\left(\frac{2k+\nu+1}{\nu+1} \right)^{\frac{\nu-1}{2}} \leq |g^{0}(n+2k)| \leq \left(\frac{\nu+1}{2}\right)\, k^{\frac{\nu-1}{2}}
	\end{align*}
	and
	\begin{align*}
		\left(\frac{2k+\nu-1}{\nu-1} \right)^{\frac{\nu-1}{2}} \leq |g^{1}(n+2k+1)| \leq \frac{\nu+1}{3}\,(2k-1)^{\frac{\nu-1}{2}}.
	\end{align*}	
\end{lemma}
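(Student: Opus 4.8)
The plan is to show that, once the discrete-series relation $\nu = 2n+\e-1$ is inserted into the coefficients $b^{\pm}$, the quantities $g^{0}(n+2k)$ and $g^{1}(n+2k+1)$ defined by \eqref{g0disc}--\eqref{g1disc} become explicit products that depend on $n$ and $\e$ \emph{only through $\nu$}; the four asserted inequalities are then exactly those established in \cite{M2} for the first holomorphic discrete series, and nothing in that argument uses the parity of $\nu$.

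Concretely, I would first record that for $\nu = 2n+\e-1$ one has $1+\nu = 2n+\e$ and $\e-1-\nu = -2n$, whence
\[
  b^{+}(n+2j) = \frac{2j+\nu+1}{2},\quad b^{-}(n+2j+2) = j+1,\quad b^{+}(n+2j-1) = \frac{2j+\nu}{2},\quad b^{-}(n+2j+1) = \frac{2j+1}{2}.
\]
Substituting these into \eqref{g0disc} and \eqref{g1disc} and cancelling the powers of $2$ yields
\[
  g^{0}(n+2k) = \prod_{j=1}^{k}\frac{2j+\nu-1}{2j},\qquad g^{1}(n+2k+1) = \prod_{j=1}^{k}\frac{2j+\nu}{2j+1}.
\]
The crucial point is that the right-hand sides make no reference to $\e$ beyond the value of $\nu$, and the bounds claimed in Lemma~\ref{mlemmadiscrete} are likewise phrased purely in terms of $\nu$ and $k$. (This is in contrast to the principal series, where $\nu\in i\RR$ does not encode the parity $\e$, which is precisely why Lemma~\ref{comparison} was needed there; here no such comparison is required.) Consequently the case $\e=1$, $\nu = 2n$ of the second holomorphic discrete series — where moreover $\nu-1\ge 1>0$, so even the boundary value $\nu=1$ does not occur — is literally the same statement about the same products as the case $\e=0$, $\nu=2n-1$ already treated in \cite{M2}, and Mieczkowski's proof of Lemma~4.6 carries over verbatim.

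For completeness, the bounds can also be obtained by elementary means starting from the closed forms above: writing $\log g^{0}(n+2k) = \sum_{j=1}^{k}\log\!\left(1+\tfrac{\nu-1}{2j}\right)$ and comparing this sum with the logarithms of the bounding expressions via integral (Riemann-sum) estimates, or equivalently by an induction on $k$ in which the multiplicative increment $g^{0}(n+2k)/g^{0}(n+2k-2) = \tfrac{2k+\nu-1}{2k}$ is compared with the corresponding increment of the bound, and similarly for $g^{1}$ with $g^{1}(n+2k+1)/g^{1}(n+2k-1) = \tfrac{2k+\nu}{2k+1}$; the base case $k=1$ is immediate. There is no genuine obstacle here: the only step demanding care is the bookkeeping in the substitution, namely verifying that every occurrence of $\e$ is absorbed into $\nu$, after which the estimates of \cite{M2} apply with no parity-specific modification.
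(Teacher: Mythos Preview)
Your proposal is correct and takes essentially the same approach as the paper: after substituting $\nu=2n+\e-1$ into $b^{\pm}$, one obtains $g^{0}(n+2k)=\prod_{j=1}^{k}\bigl(1+\tfrac{\nu-1}{2j}\bigr)$ depending only on $\nu$, and the paper then carries out precisely the log/integral-comparison argument you sketch ``for completeness'' (using $\tfrac{x}{1+x}\le\log(1+x)\le x$ and bounding the resulting sums by integrals). Your explicit observation that the products are literally independent of $\e$ once expressed through $\nu$---so that the second holomorphic discrete series requires no separate treatment---is a clean way to frame why the paper's computation covers both cases without ever mentioning $\e$ again.
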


\begin{proof}
	Combining $\nu = 2n + \e - 1$ with the definition of $g^0$, we have
	\begin{align*}
		g^{0}(n+2k) &= \prod_{j=0}^{k-1}\left(1 + \frac{\nu -1}{2(j+1)}\right).
	\end{align*}
	Taking logarithms, and employing the inequality
	\begin{align}
		\frac{x}{1+x} \leq \log(1+x) \leq x, \label{loginequality}
	\end{align}
	we have
	\[
		\frac{\nu-1}{2}\sum_{j=0}^{k-1}\frac{1}{(j+1)+\frac{\nu-1}{2}} = \sum_{j=0}^{k-1}\frac{\nu-1}{2(j+1)+\nu-1} \leq \log|g^{0}(2k)|.
	\]
	The left-hand side can be estimated by the integral inequality
	\begin{align*}
		\frac{\nu-1}{2}\sum_{j=0}^{k-1}\frac{1}{(j+1)+\frac{\nu-1}{2}} &\geq \frac{\nu-1}{2}\int_{0}^{k} \frac{dx}{(x+1)+\frac{\nu-1}{2}} \\
			&= \frac{\nu-1}{2}\log\left(\frac{2k + 1+\nu}{1+\nu}\right).
	\end{align*}
	Exponentiating gives the first lower bound.
	
	For the first upper bound, we write
	\begin{align*}
		g^{0}(n+2k) &= \left(\frac{\nu+1}{2}\right)\prod_{j=1}^{k-1}\left(1 + \frac{\nu-1}{2(j+1)}\right),
	\end{align*}
	and use the second part of \eqref{loginequality} to estimate the product:
	\begin{align*}
		\log\left|\prod_{j=1}^{k-1}\left(1 + \frac{\nu-1}{2(j+1)}\right)\right| &= \sum_{j=1}^{k-1} \log \left(1 + \frac{\nu-1}{2(j+1)}\right) \leq \sum_{j=1}^{k-1} \frac{\nu-1}{2(j+1)},
	\end{align*}
	and this we can bound with the integral
	\begin{align*}
		\sum_{j=1}^{k-1} \frac{\nu-1}{2(j+1)} &\leq \frac{\nu-1}{2}\int_{0}^{k-1}\frac{1}{x+1}\,dx = \frac{\nu-1}{2}\log k.
	\end{align*}
	Exponentiating yields the upper bound.
	
	The same procedure will give the estimates for $g^{1}(n+2k+1)$.
\end{proof}


\subsubsection{Solution to coboundary equation in discrete series}

In a representation from the discrete series, where $\mu \in \{-n^2 + n\}\cup\{-n^2 + \frac{1}{4}\}$, there is only one $X$-invariant distribution up to multiplication by a constant: 
\[
	\D_{0}(u_{n+2k+1}) = 0, \quad\D_{0}(u_{n+2k}) = \prod_{i=1}^{k} \b(n+2i-1).
\]
As always, empty products are set to $1$, meaning that $\D_{0}(u_n)=1$.  The distribution $\D_{1}$, defined by
\[
	\D_{1}(u_{n+2k}) = 0, \quad \D_{1}(u_{n+2k -1}) = \prod_{i=1}^{k-1} \b(n+2i),
\]
is not $X$-invariant because $\D_{1}(X\,u_{n}) = \D_{1}((n+\frac{\e}{2})\,u_{n+1}) = (n+\frac{\e}{2}) \neq 0$.

Given $f \in \ker\D_{0} \cap W^{s}(\HH_\mu)$, we would like to solve the coboundary equation defined by the difference equation \eqref{dEdiscrete}.  Notice that we can express any $f \in W^{s}(\HH_\mu)$ as
\begin{align*}
	f &= \sum_{k = 0}^{\infty} f(n + 2k)\,u_{n+2k} + \sum_{k = 0}^{\infty} f(n + 2k + 1)\,u_{n+2k+1} \\
		&:= f_{even} + f_{odd},
\end{align*}
and that one always has $f_{odd} \in \ker\D_{0}$.  Therefore, $f \in \ker\D_{0}$ if and only if $f_{even} \in \ker\D_{0}$.

Our strategy is to solve the coboundary equation for $f_{even}$ and $f_{odd}$ separately.

\begin{proposition}[Solution for $f_{even}$] \label{feven}
Let $s>1$, $t<s-1$.  There is a constant $C_{s,t}$ such that for any $f \in \ker\D_{0}\cap W^{s}(\HH_\mu)$ of the form  
\begin{align*}
	f &= \sum_{k=0}^{\infty} f(n+2k)\,u_{n+2k},
\end{align*}
there exists $g \in W^{t}(\HH_{\mu})$ such that $X\,g = f$ and $\left\| g \right\|_{t} \leq C_{s,t}\cdot\left\| f \right\|_{s}$.
\end{proposition}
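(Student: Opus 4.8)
The plan is to translate $X\,g=f$ into the difference equation~\eqref{dEdiscrete}, solve it by the forced forward recursion available in the discrete series, then use the hypothesis $f\in\ker\D_0$ to rewrite the resulting partial sums as rapidly decaying tails, and read off the Sobolev estimate. This is the discrete-series case of the argument in~\cite{M2}, carried out with the parity parameter $\e$ now allowed to equal $1$. Since $f=f_{even}$ is supported on the basis vectors $u_{n+2k}$, one seeks a solution among $g=\sum_{k\geq0}g(n+2k+1)\,u_{n+2k+1}$ supported on the complementary indices; with this ansatz the odd-index equations in~\eqref{dEdiscrete} hold automatically, while the even-index equations, together with the boundary relation $f(n)=-b^{-}(n+1)\,g(n+1)$, become
\[
	b^{-}(n+2k+1)\,g(n+2k+1)=b^{+}(n+2k-1)\,g(n+2k-1)-f(n+2k),\qquad k\geq1,
\]
with $g(n+1)=-f(n)/b^{-}(n+1)$ and all the coefficients $b^{\pm}$ appearing here nonzero.

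Dividing through by the sequence $g^{1}$ of~\eqref{g1disc}, telescoping, and using the identity $\bigl(b^{-}(n+2j+1)\,g^{1}(n+2j+1)\bigr)^{-1}=\D_{0}(u_{n+2j})/b^{-}(n+1)$ --- a direct computation from the definitions of $b^{\pm}$, $g^{1}$, $\D_{0}$ and the relation $\nu=2n+\e-1$ --- I would arrive at the closed form
\[
	g(n+2k+1)=-\frac{g^{1}(n+2k+1)}{b^{-}(n+1)}\sum_{j=0}^{k}f(n+2j)\,\D_{0}(u_{n+2j}).
\]
Now the hypothesis enters: since $\D_0(f_{odd})=0$ always, $f\in\ker\D_0$ means $\sum_{j\geq0}f(n+2j)\D_0(u_{n+2j})=0$ (the sum converges absolutely because $\D_0$ has Sobolev order $\leq0$ and $s>0$), so the partial sum may be replaced by its tail,
\[
	g(n+2k+1)=\frac{g^{1}(n+2k+1)}{b^{-}(n+1)}\sum_{j=k+1}^{\infty}f(n+2j)\,\D_{0}(u_{n+2j}).
\]

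For the Sobolev estimate I would expand $\|g\|_t^2=\sum_k\bigl(1+\mu+2|n+2k+1+\frac{\e}{2}|^2\bigr)^{t}\,|g(n+2k+1)|^2\,\|u_{n+2k+1}\|^2$, apply the Cauchy--Schwarz inequality to the tail sum --- one factor bounded by $\|f\|_s^2$, the other a tail of $\sum_j\bigl(1+\mu+2|n+2j+\frac{\e}{2}|^2\bigr)^{-s}\,|\D_0(u_{n+2j})|^2/\|u_{n+2j}\|^2$ --- and then substitute the asymptotics of $g^{1}$ from Lemma~\ref{mlemmadiscrete}, of $\|u_k\|^2=|\Pi_{\nu,\e,|k|}|$ from Lemma~\ref{fflemma}, and of $|\D_0(u_k)|^2/\|u_k\|^2$ from Lemma~\ref{degreeoneblue}. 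The polynomial growth of $g^{1}$ is cancelled by the polynomial decay of $\|u_k\|$, the $\mu^{p}$ numerators of Lemma~\ref{degreeoneblue} cancel so that the constant is uniform over the discrete series, and the tail against the $s$-weight supplies the decay in $k$ that makes the resulting series converge for $t<s-1$, giving $\|g\|_t\leq C_{s,t}\|f\|_s$.

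The main obstacle is precisely this last step: keeping track of the several competing polynomial rates --- the growth $\sim k^{(\nu-1)/2}$ of $g^{1}$, the decay $\sim k^{-\nu/2}$ of $\|u_{n+2k+1}\|$, the order $-\frac12$ weight coming from $\D_0$, and the Sobolev weights of orders $s$ and $t$ --- and verifying that they combine into a convergent sum on the stated range $t<s-1$, with constants uniform over all (first and second holomorphic) discrete series representations. Everything else is the bookkeeping already present in~\cite{M2}; the only genuinely new ingredient is the admission of $\e=1$, which enters only through constants, via the $\e$-dependent versions of Lemmas~\ref{fflemma} and~\ref{mlemmadiscrete}.
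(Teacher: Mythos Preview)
Your proposal is correct and follows essentially the same route as the paper: solve the forced recursion to obtain the closed form $-g(n+2k+1)=\frac{g^{1}(n+2k+1)}{b^{-}(n+1)}\sum_{j=0}^{k}f(n+2j)\D_{0}(u_{n+2j})$, convert the partial sum to a tail via $\D_0(f)=0$, and then invoke the asymptotic estimates (the paper simply refers to \cite{M2} at that point, whereas you sketch the Cauchy--Schwarz argument and the relevant lemmas). Your outline of the Sobolev estimate is in fact more detailed than what the paper provides.
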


\begin{proof}
From the second part of the difference equation \eqref{dEdiscrete}, one sees that $g(n+1)$ is determined by $f(n)$.  One can use this to solve Equation \eqref{dEdiscrete} for $g(n+3)$ and successively for $g(n+2k+1)$, arriving at the formula
\begin{align*}
	-g(n+2k+1) &= \frac{1}{b^{-}(n+1)}\,g^{1}(n+2k+1)\,\sum_{i=0}^{k} f(n+2i)\,\D_{0}(u_{n+2i}).
\end{align*}    
Now, by our assumption that $D_{0}(f) = 0$,   
\begin{align} 
	g(n+2k+1) &= \frac{1}{b^{-}(n+1)}\,g^{1}(n+2k+1)\,\sum_{i=k+1}^{\infty} f(n+2i)\,\D_{0}(u_{n+2i}). \label{godd}
\end{align}    
We can set $g(n+2k)=0$ for all $k \geq 0$.

It is only left to compute the Sobolev norm of $g$.  At this point we refer to estimates in \cite{M2} that prove the lemma.
\end{proof}

One must be more careful in treating $f_{odd}$, because it is automatically in the kernel of all invariant distributions.  We cannot assume that $f_{odd} \in \ker\D_1$, because $\D_1$ is not $X$-invariant in representations from the discrete series.  We must therefore find another way to arrive at an expression similar to \eqref{godd}.

\begin{proposition}[Solution for $f_{odd}$] \label{fodd}
Let $s>1$, $t<s-1$.  There is a constant $C_{s,t}$ such that for any $f \in W^{s}(\HH_\mu)$ of the form  
\begin{align*}
	f &= \sum_{k=0}^{\infty} f(n+2k+1)\,u_{n+2k+1},
\end{align*}
there exists $g \in W^{t}(\HH_{\mu})$ such that $X\,g = f$ and $\left\| g \right\|_{t} \leq C_{s,t}\cdot\left\| f \right\|_{s}$.
\end{proposition}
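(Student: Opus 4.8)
The plan is to follow the proof of Proposition~\ref{feven}, interchanging the two parity classes of basis vectors, but replacing the step that used the hypothesis $\D_{0}(f)=0$ by a choice of free parameter. Since $X$ maps each $u_{k}$ to a combination of $u_{k+1}$ and $u_{k-1}$, it interchanges the parity classes $\{u_{n+2k}\}$ and $\{u_{n+2k+1}\}$, so, $f$ being supported on the odd class, I would look for $g$ supported on the even class, \emph{i.e.}\ $g(n+2k+1)=0$ for all $k\geq 0$. With this ansatz the second line of \eqref{dEdiscrete} reads $f(n)=-b^{-}(n+1)\,g(n+1)$ and both sides vanish automatically ($f(n)=0$ because $u_{n}$ is in the even class, $g(n+1)=0$ by the ansatz), while the first line of \eqref{dEdiscrete} at the odd indices $k=2j+1$ likewise reduces to $0=0$; at the even indices $k=2j$ it collapses to the first-order recursion
\begin{align*}
	b^{+}(n+2j)\,g(n+2j) &= f(n+2j+1)+b^{-}(n+2j+2)\,g(n+2j+2),\qquad j\geq 0.
\end{align*}

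First I would solve this recursion. Since $b^{+}(n+2j)=n+j+\tfrac{\e}{2}$ never vanishes, one can run it backward: writing $g(n+2j)=g^{0}(n+2j)\,c_{j}$ with $g^{0}$ the homogeneous solution \eqref{g0disc}, the telescoping identity $b^{-}(n+2m+2)/b^{+}(n+2m)=g^{0}(n+2m)/g^{0}(n+2m+2)$ turns the recursion into $c_{j}-c_{j+1}=f(n+2j+1)/\bigl(b^{+}(n+2j)\,g^{0}(n+2j)\bigr)$, which I solve by taking $c_{j}$ to be the corresponding tail sum. This produces the formal solution
\begin{align*}
	g(n+2j) &= g^{0}(n+2j)\sum_{i=j}^{\infty}\frac{f(n+2i+1)}{b^{+}(n+2i)\,g^{0}(n+2i)},\qquad g(n+2j+1)=0,
\end{align*}
and that this $g$ actually satisfies the recursion --- hence $X\,g=f$ as formal series --- is immediate from the telescoping. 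To see that the series converges for every $f\in W^{s}(\HH_{\mu})$ with $s>1$, I would use the lower bound $g^{0}(n+2i)\geq 1$ from Lemma~\ref{mlemmadiscrete} (valid because $\nu=2n+\e-1\geq 1$) together with $b^{+}(n+2i)=n+i+\tfrac{\e}{2}$, so that $1/\bigl(b^{+}(n+2i)\,g^{0}(n+2i)\bigr)=O\bigl((1+i)^{-1}\bigr)$; combined with the $\ell^{2}$-summability of $\bigl(\,|f(n+2i+1)|\,\left\| u_{n+2i+1}\right\|\,\bigr)_{i}$ against the weight $(1+i^{2})^{s}$ forced by $f\in W^{s}$, and the norm asymptotics $\left\| u_{n+2i+1}\right\|^{2}=|\Pi_{\nu,\e,n+2i+1}|$ from Lemma~\ref{fflemma}, a Cauchy--Schwarz estimate closes the convergence.

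Next I would estimate $\left\| g\right\|_{t}$ for $t<s-1$; this is where the work lies, and it runs just as in \cite{M2} and in Proposition~\ref{feven}. One inserts the formula for $g(n+2j)$ into
\begin{align*}
	\left\| g\right\|_{t}^{2} &= \sum_{j\geq 0}\bigl(1+\mu+2(n+2j+\tfrac{\e}{2})^{2}\bigr)^{t}\,|g(n+2j)|^{2}\,\left\| u_{n+2j}\right\|^{2},
\end{align*}
applies Cauchy--Schwarz to the inner sum over $i\geq j$ with a suitable pair of dual weights, interchanges the order of summation, and then invokes the two-sided asymptotics of $g^{0}$ from Lemma~\ref{mlemmadiscrete} and the formula $b^{+}(n+2i)=n+i+\tfrac{\e}{2}$ to bound the resulting double sum by $C_{s,t}\,\left\| f\right\|_{s}^{2}$. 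The condition $t<s-1$ is what makes the remaining sums over the difference $i-j$ converge, and the constant $C_{s,t}$ can be taken uniform over the discrete-series representations since the estimates of Lemma~\ref{mlemmadiscrete} are.

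I expect the genuine obstacle to be conceptual rather than computational. In the treatment of $f_{even}$ the hypothesis $\D_{0}(f)=0$ was essential: it converted an initial-segment sum into a small tail, which is what prevented the growing homogeneous factor $g^{1}$ in \eqref{godd} from spoiling the decay of the solution. For $f_{odd}$ no analogous constraint is available --- it is automatically annihilated by $\D_{0}$, and $\D_{1}$ is not $X$-invariant, so one cannot impose $\D_{1}(f)=0$. The point that has to be argued with care is that none is needed: in the discrete series the homogeneous solution $g^{0}$ is non-decreasing (Lemma~\ref{mlemmadiscrete}), so $1/g^{0}$ supplies decay, and the solution can be \emph{defined} by the unconditionally convergent tail sum $\sum_{i\geq j}$ with no linear condition on $f$ whatsoever; verifying that this tail-sum solution lies in $W^{t}$, uniformly in the representation, is the substance of the proof.
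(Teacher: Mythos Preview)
Your proposal is correct and follows essentially the same route as the paper: both set $g$ on the even class, observe that $g(n)$ is a free parameter (unlike in Proposition~\ref{feven}), and choose it so that the solution becomes a tail sum, then defer the Sobolev estimate to the computations in \cite{M2}. Your expression $g^{0}(n+2j)\sum_{i\geq j} f(n+2i+1)/\bigl(b^{+}(n+2i)\,g^{0}(n+2i)\bigr)$ is algebraically identical to the paper's $\tfrac{1}{b^{+}(n)}\,g^{0}(n+2k)\sum_{i\geq k+1} f(n+2i-1)\,\D_{1}(u_{n+2i-1})$, just reindexed and with the product rewritten.
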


\begin{proof}

A formal solution is given by
\begin{align*}
	&-g(n+2k) \\
		&= \frac{1}{b^{-}(n+2k)}\,\prod_{m=1}^{k-1}\frac{b^{+}(n+2m)}{b^{-}(n+2m)} \left[\sum_{i=1}^{k} f(n+2i-1)\,\prod_{m=1}^{i-1} \frac{b^{-}(n+2m)}{b^{+}(n+2m)} - b^{+}(n)g(n)\right],
\end{align*}
and we are left to choose $g(n)$ (unlike in Proposition~\ref{feven}, where the first value of the solution was dictated by the difference equation).  The natural choice is
\begin{align*}
	b^{+}(n)g(n) &= \sum_{i=1}^{\infty} f(n+2i-1)\,\prod_{m=1}^{i-1} \frac{b^{-}(n+2m)}{b^{+}(n+2m)},
\end{align*}
so that we will be able to apply the same estimates here as the ones that conclude the proof of Proposition~\ref{feven}.  We need to first see that this choice of $g(n)$ is finite:
\begin{align*}
	&|b^{+}(n)g(n)|^2 \\
		&= \left|\sum_{i=1}^{\infty} f(n+2i-1)\,\prod_{m=1}^{i-1} \b(n+2m)\right|^2 \\
		&\leq \sum_{i=1}^{\infty} (1+\mu + 2(n+2i-1 + \frac{\e}{2})^2)^{s} |f(n+2i-1)|^2\,\left\| u_{n+2i-1} \right\|^2\\
		&\indent\times\sum_{i=1}^{\infty}\,(1+\mu + 2(n+2i-1 + \frac{\e}{2})^2)^{-s}\left| b^{+}(n) \right|^2\,\left|\prod_{m=0}^{i-1}\frac{b^{-}(n+2m+2)}{b^{+}(n+2m)}\right|^2\,\left\| u_{n+2i-1} \right\|^{-2},
\intertext{and by Lemmas \ref{mlemmadiscrete} and \ref{fflemma},}
	&\leq \left\| f \right\|_{s}^{2}\cdot\sum_{i=1}^{\infty}(1+\mu + 2(n+2i-1 + \frac{\e}{2})^2)^{-s}\,\left(n+\frac{\e}{2}\right)^2\,\left(\frac{2i +\nu+1}{\nu+1} \right)^{-\nu+1} \\
		&\indent\times C\left(\frac{2i}{n+1}\right)^{\nu}.
\end{align*}
The sum converges because $s > 1$.  This shows that $|g(n)| < \infty$, as we have chosen it.

We now have 
\begin{align*}
	g(n+2k) &= \frac{1}{b^{+}(n)}\, g^{0}(n+2k)\, \sum_{i=k+1}^{\infty} f(n+2i-1)\,\D_{1}(u_{n+2i-1}).
\end{align*}
Again, estimates from \cite{M2} yield the desired bound on the Sobolev norms of $g$.
\end{proof}

Propositions~\ref{feven} and~\ref{fodd} imply Theorem \ref{m} for representations of $\SL(2,\RR)$ from the first and second holomorphic discrete series.  This completes the extension of Theorem \ref{m} from $\PSL(2,\RR)$ to $\SL(2,\RR)$.


\subsection{Proof of Lemma~\ref{degreeoneblue}}\label{lemmaproof}

In this section we prove Lemma~\ref{degreeoneblue}, which is needed in the proof of Lemma~\ref{sobolevnorms}.  We will use the following simple fact.

\begin{lemma}\label{simplefact}
There are positive constants $C_1, C_2, C_3, C_4, C_5, C_6$ such that in any representation from the complementary series,
\[
C_1\,(1+\mu+2(2k)^2)^{\frac{1}{2}} \leq 4k + 1 + \nu \leq C_2 \,(1+\mu+2(2k)^2)^{\frac{1}{2}}
\]
and
\[
C_3\,(1+\mu+2(2k+1)^2)^{\frac{1}{2}} \leq 4k + 1 - \nu \leq C_4\,(1+\mu+2(2k+1)^2)^{\frac{1}{2}}
\]
and in any representation from the discrete series,
\[
C_5\,(1+\mu+2(n+2k+\e/2)^2)^{\frac{1}{2}} \leq 2k + 1 + \nu \leq C_6\,(1+\mu+2(n+2k+\e/2)^2)^{\frac{1}{2}},
\]
for all $k \geq 0$.
\end{lemma}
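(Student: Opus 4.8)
The plan is to prove each of the three displayed pairs of inequalities by the same elementary mechanism: in every case the quantity on the left is a positive affine function of the summation index, the quantity under the square root on the right is a positive quadratic in that index, and one simply compares the square of the left-hand side with the right-hand side. Recall that the Sobolev weight attached to a basis vector $u_m$ is $1+\mu+2(m+\e/2)^2$; the three assertions say, respectively, that $4b^{+}(2k)=4k+1+\nu$ and $4b^{-}(2k+1)=4k+1-\nu$ and $2k+1+\nu$ are each comparable to the square root of the weight at the relevant index. So it suffices, in each case, to sandwich the relevant polynomial ratio between two positive constants.

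First I would dispose of the complementary series, where $\e=0$ and $\mu=(1-\nu^2)/4\in(0,\tfrac14)$. Then $1+\mu+8k^2$ and $1+\mu+2(2k+1)^2$ are each trapped between $1$ times and $\tfrac54$ times the leading quadratic, hence comparable to $(1+k)^2$ with absolute constants. On the other side, using $\lvert\nu\rvert\le\nu_0<1$, the affine expressions $4k+1\pm\nu$ are trapped between $4k+1-\nu_0>0$ and $4k+1+\nu_0$, hence also comparable to $1+k$. Dividing, the ratios are bounded above and below. The only place the restriction $\nu_0<1$ is genuinely used is at $k=0$, where one needs $1+\nu$ and $1-\nu$ bounded away from $0$; I would note that this is exactly the spectral-gap hypothesis under which Lemma~\ref{degreeoneblue} (the only place Lemma~\ref{simplefact} gets used) is applied, so the constants $C_1,\dots,C_4$ should be read as depending on $\nu_0$.

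For the discrete series I would substitute the identities $\nu=2n+\e-1$ together with $\mu=-n^2+n$ (when $\e=0$) or $\mu=-n^2+\tfrac14$ (when $\e=1$), with $n\ge1$. Then $2k+1+\nu=2(n+k)+\e$, and on expanding $1+\mu+2(n+2k+\e/2)^2$ the term $-n^2$ coming from $\mu$ cancels against part of the square, leaving the manifestly positive quadratic $1+n^2+n+8nk+8k^2$ when $\e=0$ (and a similarly explicit positive quadratic when $\e=1$). It then remains to show that $\bigl(2(n+k)+\e\bigr)^2$ is comparable to this quadratic with absolute constants, which follows from the elementary bounds $8nk\le 4n^2+4k^2$ and $1\le n^2$, $n\le n^2$ (valid since $n\ge1$): both numerator and denominator are squeezed between fixed positive multiples of $n^2+k^2$.

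The bookkeeping here is routine and I do not expect any real obstacle. The one point deserving care is uniformity of the constants over the whole family of representations: in the discrete series this is automatic, while in the complementary series it forces one to keep $\nu$ away from $\pm1$ (equivalently $\mu$ away from $0$), i.e.\ to invoke the spectral gap, precisely as in Lemma~\ref{degreeoneblue}.
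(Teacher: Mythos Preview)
Your proposal is correct and follows exactly the route the paper indicates: the paper omits the proof entirely, remarking only that ``one only needs to take the relationship $\nu^2=1-4\mu$ into account, as well as the different possible values $\nu$ and $\mu$ can take in the complementary and discrete series representations,'' and this is precisely what you do by comparing the square of the affine expression against the quadratic Sobolev weight in each series.

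One point worth highlighting: you correctly observe that in the complementary series the constants $C_1,\dots,C_4$ cannot be uniform over all $\nu\in(-1,1)\setminus\{0\}$, because at $k=0$ the quantities $1\pm\nu$ can approach $0$ while the weight $(1+\mu)^{1/2}$ stays bounded below. The lemma as stated in the paper literally asserts universal constants, but you are right that the constants must depend on $\nu_0$; this is consistent with how the lemma is actually invoked in the proof of Lemma~\ref{degreeoneblue}, which carries the hypothesis $|\nu|\le\nu_0<1$ throughout. The paper's omitted proof glosses over this, so your remark is a useful clarification rather than a gap.
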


\begin{remark*}
We omit the details of the proof.  One only needs to take the relationship $\nu^2 = 1 - 4\mu$ into account, as well as the different possible values $\nu$ and $\mu$ can take in the complementary and discrete series representations.  
\end{remark*}

\begin{proof}[Proof of Lemma~\ref{degreeoneblue}]
By comparing~\eqref{D0formula} to~\eqref{g1formula}, we see that in the first principal series representations, where $\e=0$ and $\nu \in i\RR$,
\[
	\frac{\abs{\D_0^{\HH_{\mu}}(u_{\pm2k})}^2}{\norm{u_{\pm 2k}}^2} = \Abs{g^1 (2k+1)}^{-2}\Abs{\frac{1-\nu}{4k+1-\nu}}^2
\]
where $k \in \ZZ_{\geq 0}$.  We use Lemmas~\ref{fflemma} and~\ref{mlemmaprincipal} to get upper bound
\[
\frac{\abs{\D_0^{\HH_{\mu}}(u_{\pm(2k)})}^2}{\norm{u_{\pm(2k)}}^2} \leq C_{\nu} \Abs{4k + \nu - 1}\Abs{\frac{1-\nu}{4k + 1-  \nu}}^2 \leq C_{\nu}\,\frac{1-\nu^2}{\left((4k +1)^2-\nu^2\right)^{1/2}}.
\]
where $C_{\nu}$ is the constant from Lemma~\ref{mlemmaprincipal}, bounded over $\nu \in i\RR$.  Since $1-\nu^2 = 4\mu$ we can easily bound this above by
\[
	\leq C_{\nu}\,\frac{\mu}{\left(1 + \mu + 2 (2k)\right)^{1/2}}
\]
after some appropriate scaling of $C_{\nu}>0$.  Getting the lower bound is entirely similar, as are the corresponding calculations for $\frac{\abs{\D_1^{\HH_{\mu}}(u_{\pm(2k+1)})}^2}{\norm{u_{\pm(2k+1)}}^2}$ (where we would compare~\eqref{D1formula} and~\eqref{g0formula} in the first step) and for the second principal series (where $\e=1$ and we use Lemma~\ref{comparison}).

For the complementary series, where $\nu \in (-1,1)\backslash\{0\}$, we first use~\eqref{D0formula} and~\eqref{Pidef} to write
\begin{align*}
\frac{\norm{u_{\pm 2k}}^2}{\abs{\D_0^{\HH_{\mu}}(u_{\pm 2k})}^2} &= \prod_{j=1}^{k}\Abs{\frac{b^+(2j-1)}{b^-(2j-1)}}^2 \, \prod_{j =1}^{2k}\Abs{\frac{b^- (j)}{b^+ (j-1)}} \\
	&= \Abs{\frac{b^+(2k)}{b^+(0)}}\,\prod_{j=1}^{k}\Abs{\frac{b^+(2j-1)}{b^-(2j-1)}} \, \prod_{j =1}^{k}\Abs{\frac{b^- (2j)}{b^+ (2j)}}
\end{align*}
for all $k \geq 0$.  After accounting for the definitions of $b^+$ and $b^-$, and rearranging terms, we have
\begin{equation}\label{combwith}
\frac{\norm{u_{\pm 2k}}^2}{\abs{\D_0^{\HH_{\mu}}(u_{\pm 2k})}^2} = \frac{4k + 1 + \nu}{1+\nu}\prod_{j=1}^{k}\left(1+\frac{4+4\nu}{(4j+1+\nu)(4j-3-\nu)}\right).
\end{equation}
Similarly, by using~\eqref{D1formula} and~\eqref{Pidef} one can show that 
\begin{equation}\label{combwith2}
\frac{\norm{u_{\pm2k\pm1}}^2}{\abs{\D_1^{\HH_{\mu}}(u_{\pm2k\pm1})}^2} = \frac{4k + 1 - \nu}{1+\nu}\prod_{j=1}^{k}\left(1+\frac{4+4\nu}{(4j+1+\nu)(4j-3-\nu)}\right)^{-1}.
\end{equation}
The inequality $\frac{x}{1+x}\leq\log(1+x)\leq x$ for $x>0$ lets us bound the logarithm of the product in~\eqref{combwith} below by
\[
\sum_{j=1}^{k}\frac{4+4\nu}{4+4\nu+(4j+1+\nu)(4j-3-\nu)} \leq\log\prod_{j=1}^{k}\left(1+\frac{4+4\nu}{(4j+1+\nu)(4j-3-\nu)}\right)
\]
and above by
\[
\log\prod_{j=1}^{k}\left(1+\frac{4+4\nu}{(4j+1+\nu)(4j-3-\nu)}\right) \leq \sum_{j=1}^{k}\frac{4+4\nu}{(4j+1+\nu)(4j-3-\nu)}.
\]
Comparing the sums to integrals and exponentiating, we see that there is a number $C_{\nu_0}>0$ such that 
\[
C_{\nu_0}^{-1}\leq\prod_{j=1}^{k}\left(1+\frac{4+4\nu}{(4j+1+\nu)(4j-3-\nu)}\right) \leq C_{\nu_0}
\]
whenever $\abs{\nu} \leq \nu_0$, where $\nu_0 < 1$.  Combining with~\eqref{combwith} and~\eqref{combwith2} proves the desired inequality after observing Lemma~\ref{simplefact}.

We now address the discrete series, where $\nu = 2n+\e -1$.  Combining~\eqref{D0formula} with~\eqref{Pidef}, 
\begin{align}
\frac{\norm{u_{n+2k}}^2}{\abs{\D_0^{\HH_{\mu}}(u_{n+2k})}^2} &= \frac{b^+(n+2k)}{b^+ (n)}\,\prod_{j=1}^{k}\frac{2j}{2j-1}\,\prod_{j=1}^{k}\frac{2j+\nu}{2j+1+\nu} \nonumber \\
	&= \frac{2k+1+\nu}{1+\nu}\,\prod_{j=1}^{k}\left(1 + \frac{1+\nu}{(2j-1)(2j+1+\nu)}\right) \label{togetherw}
\end{align}
The inequality $\frac{x}{1+x}\leq\log(1+x)\leq x$ for $x>0$ implies that the logarithm of the product is bounded below by
\[
\sum_{j=1}^{k} \frac{1+\nu}{1+\nu+(2j-1)(2j+1+\nu)}\leq\log\prod_{j=1}^{k}\left(1 + \frac{1+\nu}{(2j-1)(2j+1+\nu)}\right)
\]
and above by
\[
\log\prod_{j=1}^{k}\left(1 + \frac{1+\nu}{(2j-1)(2j+1+\nu)}\right) \leq \sum_{j=1}^{k} \frac{1+\nu}{(2j-1)(2j+1+\nu)}.
\]
Comparing the sums to integrals and exponentiating, we conclude that there is a number $C>0$ such that the inequality
\[
C^{-1}\,(\nu+1)^{\frac{1}{2}} \leq \prod_{j=1}^{k}\left(1 + \frac{1+\nu}{(2j-1)(2j+1+\nu)}\right) \leq C\,(\nu+1)^{\frac{1}{2}}
\]
is satisfied.  This and~\eqref{togetherw} imply that 
\[
	C^{-1}\,\frac{(\nu+1)^{\frac{1}{2}}}{2k+1+\nu}\leq\frac{\norm{u_{n+2k}}^2}{\abs{\D_0^{\HH_{\mu}}(u_{n+2k})}^2}\leq C\,\frac{(\nu+1)^{\frac{1}{2}}}{2k+1+\nu},
\]
which in turn implies the lemma.
\end{proof}


\subsection*{Acknowledgments}

The author is grateful to Livio Flaminio, Alexander Gorodnik, Anatole Katok, Svetlana Katok, and Ralf Spatzier for fruitful discussions and advice during the development of this article, and to the referee for valuable comments and suggestions.


\bibliographystyle{amsalpha}
\bibliography{../bibliography}
\end{document}